\title{Bi-interpretation in weak set theories}
\author{Alfredo Roque Freire}
\address[Afredo Roque Freire]
{Professor of Logic, University of Bras\'ilia, Bras\'ilia}
\email{alfrfreire@gmail.com}
\urladdr{http://alfredoroquefreire.com}
\author{Joel David Hamkins}
\address[Joel David Hamkins]
{Professor of Logic, Oxford University \&\ Sir Peter Strawson Fellow, University College, Oxford}
         \email{joeldavid.hamkins@philosophy.ox.ac.uk}
         \urladdr{http://jdh.hamkins.org}
\thanks{This research was supported by grant 2017/21020-0, S\~ao Paulo Research Foundation (FAPESP). The research project grew out of the first author's PhD dissertation \cite{Freire2019:PhDthesis}, with related philosophical work in \cite{freire2018counts,freire2019ontological}. Commentary can be made about this article on the second author's blog at \href{http://jdh.hamkins.org/bi-interpretation-in-weak-set-theories}{http://jdh.hamkins.org/bi-interpretation-in-weak-set-theories}.}
\newtheorem{theorem}{Theorem}
\newtheorem*{theorem*}{Theorem}
\newtheorem*{maintheorem*}{Main Theorem}
\newtheorem*{maintheorems*}{Main Theorems}
\newtheorem{corollary}[theorem]{Corollary}
\newtheorem*{corollary*}{Corollary}
\newtheorem*{corollaries*}{Corollaries}
\newtheorem{lemma}[theorem]{Lemma}
\newtheorem{question}[theorem]{Question}
\newtheorem*{question*}{Question}
\newtheorem*{questions*}{Questions}
\newtheorem*{mainquestion*}{Main Question} % without numbering
\newtheorem*{openquestion*}{Open Question} % without numbering
\newtheorem{definition}[theorem]{Definition}
\newcommand{\QED}{\end{proof}}
\def\proclaim[#1]{{\bf #1}}
\def\BF#1.{{\bf #1.}}
\def\says#1:#2\par{\item[#1] #2\par}
\newcommand{\Los}{\L o\'s}
\newcommand{\Levy}{L\'{e}vy}
\newcommand\Vaananen{Väänänen}
\newcommand{\B}{{\mathbb B}}
\newcommand{\C}{{\mathbb C}}
\newcommand{\N}{{\mathbb N}}
\newcommand{\Q}{{\mathbb Q}}
\newcommand{\Z}{{\mathbb Z}}
\newcommand{\R}{{\mathbb R}}
\newcommand{\overbar}[1]{\mkern 3.5mu\overline{\mkern-3.5mu#1\mkern-.5mu}\mkern.5mu}% correct overline for slanted letters in math mode
\newcommand{\barin}{\mkern3mu\overline{\mkern-3mu\in\mkern-1.5mu}\mkern1.5mu}
\newcommand{\Mbar}{{\overbar{M}}}
\newcommand{\Nbar}{{\overbar{N}}}
\newcommand{\dotminus}{\mathbin{\text{\@dotminus}}}
\newcommand{\@dotminus}{%
  \ooalign{\hidewidth\raise1ex\hbox{.}\hidewidth\cr$\m@th-$\cr}%
}
\newcommand{\of}{\subseteq}
\newcommand{\set}[1]{\{\,{#1}\,\}}
\newcommand{\singleton}[1]{\left\{{#1}\right\}}
\newcommand{\Add}{\mathop{\rm Add}}
\newcommand{\restrict}{\upharpoonright} % uses amssymb
\newcommand{\satisfies}{\models}
\newcommand{\proves}{\vdash}
\newcommand{\union}{\cup}
\newcommand{\Union}{\bigcup}
\newcommand{\intersect}{\cap}
\newcommand{\smalllt}{\mathrel{\mathchoice{\raise2pt\hbox{$\scriptstyle<$}}{\raise1pt\hbox{$\scriptstyle<$}}{\raise0pt\hbox{$\scriptscriptstyle<$}}{\scriptscriptstyle<}}}
\newcommand{\smallleq}{\mathrel{\mathchoice{\raise2pt\hbox{$\scriptstyle\leq$}}{\raise1pt\hbox{$\scriptstyle\leq$}}{\raise1pt\hbox{$\scriptscriptstyle\leq$}}{\scriptscriptstyle\leq}}}
   \def\DHLhksqrt#1#2{%
   \setbox0=\hbox{$#1\sqrt{#2\,}$}\dimen0=\ht0
   \advance\dimen0-0.2\ht0
   \setbox2=\hbox{\vrule height\ht0 depth -\dimen0}%
   {\box0\lower0.4pt\box2}}
\newcommand{\boolval}[1]{\mathopen{\lbrack\!\lbrack}\,#1\,\mathclose{\rbrack\!\rbrack}}
\def\[#1]{\boolval{#1}}
\newbox\gnBoxA
\newdimen\gnCornerHgt
\newdimen\gnArgHgt
\def\gcode #1{%
\setbox\gnBoxA=\hbox{$#1$}%
\gnArgHgt=\ht\gnBoxA%
\ifnum     \gnArgHgt<\gnCornerHgt \gnArgHgt=0pt%
\else \advance \gnArgHgt by -\gnCornerHgt%
\fi \raise\gnArgHgt\hbox{\tiny$\ulcorner$} \box\gnBoxA %
\raise\gnArgHgt\hbox{\tiny$\urcorner$}}
\newcommand{\UnderTilde}[1]{{\setbox1=\hbox{$#1$}\baselineskip=0pt\vtop{\hbox{$#1$}\hbox to\wd1{\hfil$\sim$\hfil}}}{}}
\newcommand{\Undertilde}[1]{{\setbox1=\hbox{$#1$}\baselineskip=0pt\vtop{\hbox{$#1$}\hbox to\wd1{\hfil$\scriptstyle\sim$\hfil}}}{}}
\newcommand{\undertilde}[1]{{\setbox1=\hbox{$#1$}\baselineskip=0pt\vtop{\hbox{$#1$}\hbox to\wd1{\hfil$\scriptscriptstyle\sim$\hfil}}}{}}
\newcommand{\UnderdTilde}[1]{{\setbox1=\hbox{$#1$}\baselineskip=0pt\vtop{\hbox{$#1$}\hbox to\wd1{\hfil$\approx$\hfil}}}{}}
\newcommand{\Underdtilde}[1]{{\setbox1=\hbox{$#1$}\baselineskip=0pt\vtop{\hbox{$#1$}\hbox to\wd1{\hfil\scriptsize$\approx$\hfil}}}{}}
\renewcommand{\implies}{\mathrel{\rightarrow}}
\renewcommand{\iff}{\mathrel{\leftrightarrow}}
\newcommand{\Iff}{\mathrel{\Longleftrightarrow}}
\newcommand{\iso}{\cong}
\def\<#1>{\left\langle#1\right\rangle}
\newcommand{\TC}{\mathop{{\rm TC}}}
\newcommand{\Ord}{\mathord{{\rm Ord}}}
\newcommand{\ZFC}{{\rm ZFC}}
\newcommand{\ZF}{{\rm ZF}}
\newcommand{\ZFm}{\ZF^-}
\newcommand{\ZFCm}{\ZFC^-}
\newcommand{\KM}{{\rm KM}}
\newcommand{\CH}{{\rm CH}}
\newcommand{\GCH}{{\rm GCH}}
\newcommand{\AC}{{\rm AC}}
\newcommand{\AD}{{\rm AD}}
\newcommand{\MA}{{\rm MA}}
\newcommand{\HOD}{{\rm HOD}}
\newcommand{\HF}{{\rm HF}}
\newcommand{\PA}{{\rm PA}}
\newcommand{\cell}[1]{\boxit{\hbox to 17pt{\strut\hfil$#1$\hfil}}}
\newcommand{\head}[2]{\lower2pt\vbox{\hbox{\strut\footnotesize\it\hskip3pt#2}\boxit{\cell#1}}}
\newcommand{\boxit}[1]{\setbox4=\hbox{\kern2pt#1\kern2pt}\hbox{\vrule\vbox{\hrule\kern2pt\box4\kern2pt\hrule}\vrule}}
\newcommand{\Col}[3]{\hbox{\vbox{\baselineskip=0pt\parskip=0pt\cell#1\cell#2\cell#3}}}
\newcommand{\tapenames}{\raise 5pt\vbox to .7in{\hbox to .8in{\it\hfill input: \strut}\vfill\hbox to
.8in{\it\hfill scratch: \strut}\vfill\hbox to .8in{\it\hfill output: \strut}}}
\newcommand{\Head}[4]{\lower2pt\vbox{\hbox to25pt{\strut\footnotesize\it\hfill#4\hfill}\boxit{\Col#1#2#3}}}
\newcommand{\Dots}{\raise 5pt\vbox to .7in{\hbox{\ $\cdots$\strut}\vfill\hbox{\ $\cdots$\strut}\vfill\hbox{\
$\cdots$\strut}}}
\renewcommand\emptyset{\varnothing}
\newcommand\ZM{{\rm ZM}}
\newcommand\drawM[1]{\draw[fill=yellow!25!white,fill opacity=.7] (-2,-2) rectangle (2,2) node[right,opacity=1] {#1};}
\newcommand\drawN[1]{\draw[fill=red!50!white,fill opacity=.6] (0,0) circle (2);
\draw (1.4,1.4) node[above right=-.5mm] {#1};}
\newcommand\drawMbi[3]{\draw[fill=yellow!25!white,fill opacity=.6] (-2,-2) rectangle (2,2)
node[right,opacity=1] {#1};
\draw[-{>[scale=#3]},>=Stealth,opacity=1,very thin,shorten >=.1mm] (-4.5,2.1) to[bend right=8] node[near start,above] {#2} (-2,0);
}
\newcommand\drawNbi[3]{\draw[fill=red!50!white,fill opacity=.5] (0,0) circle (2);
\draw (1.35,1.35) node[above right=-.2mm] {#1};
\draw[-{>[scale=#3]},>=Stealth,opacity=1,very thin,shorten >=.1mm] (6,-2.1) to[bend right=8] node[pos=.2,above] {#2} (2,0);
}
\newcommand{\sss}{\leftrightarrow}
\begin{document}

\begin{abstract}
In contrast to the robust mutual interpretability phenomenon in set theory, Ali Enayat proved that bi-interpretation is absent: distinct theories extending \ZF\ are never bi-interpretable and models of \ZF\ are bi-interpretable only when they are isomorphic. Nevertheless, for natural weaker set theories, we prove, including Zermelo-Fraenkel set theory $\ZFCm$ without power set and Zermelo set theory Z, there are nontrivial instances of bi-interpretation. Specifically, there are well-founded models of $\ZFCm$ that are bi-interpretable, but not isomorphic---even $\<H_{\omega_1},\in>$ and $\<H_{\omega_2},\in>$ can be bi-interpretable---and there are distinct bi-interpretable theories extending $\ZFCm$. Similarly, using a construction of Mathias, we prove that every model of \ZF\ is bi-interpretable with a model of Zermelo set theory in which the replacement axiom fails.
\end{abstract}

\maketitle

\section{Introduction}

Set theory exhibits a robust mutual interpretability phenomenon: in a given model of set theory, we can define diverse other interpreted models of set theory. In any model of Zermelo-Fraenkel \ZF\ set theory, for example, we can define an interpreted model of $\ZFC+\GCH$, via the constructible universe, as well as definable interpreted models of $\ZF+\neg\AC$, of $\ZFC+\MA+\neg\CH$, of $\ZFC+\mathfrak{b}<\mathfrak{d}$, and so on for infinitely many theories. For these latter theories, set theorists often use forcing to construct outer models of the given model; but nevertheless the Boolean ultrapower method provides definable interpreted models of these theories inside the original model (see theorem \ref{Theorem.Set-theories-are-mutually-interpretable}). Similarly, in models of \ZFC\ with large cardinals, one can define fine-structural canonical inner models with large cardinals and models of \ZF\ satisfying various determinacy principles, and vice versa. In this way, set theory exhibits an abundance of natural mutually interpretable theories.

Do these instances of mutual interpretation fulfill the more vigourous conception of bi-interpretation? Two models or theories are mutually interpretable, when merely each is interpreted in the other, whereas bi-interpretation requires that the interpretations are invertible in a sense after iteration, so that if one should interpret one model or theory in the other and then re-interpret the first theory inside that, then the resulting model should be definably isomorphic to the original universe (precise definitions in sections \S\ref{Section.Interpretability-in-models} and \S\ref{Section.Interpretability-in-theories}). The interpretations mentioned above are not bi-interpretations, for if we start in a model of $\ZFC+\neg\CH$ and then go to $L$ in order to interpret a model of $\ZFC+\GCH$, then we've already discarded too much set-theoretic information to expect that we could get a copy of our original model back by interpreting inside $L$. This problem is inherent, in light of the following theorem of Ali Enayat, showing that indeed there is no nontrivial bi-interpretation phenomenon to be found amongst the set-theoretic models and theories satisfying \ZF. In interpretation, one must inevitably discard set-theoretic information.\goodbreak

\begin{theorem}[Enayat \cite{enayat2017variations}]\
 \begin{enumerate}
   \item \ZF\ is tight: no two distinct theories extending \ZF\ are bi-interpretable.
   \item Indeed, \ZF\ is semantically tight: no two non-isomorphic models of \ZF\ are bi-interpretable.
   \item What is more, \ZF\ is solid: if $M$ and $N$ are mutually interpretable models of \ZF\ and the isomorphism of $M$ with its copy inside the interpreted copy of $N$ in $M$ is $M$-definable, then $M$ and $N$ are isomorphic.
 \end{enumerate}
\end{theorem}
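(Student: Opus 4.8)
The plan is to derive (1) and (2) from (3), so that everything reduces to solidity. If $\ZF$ is solid it is semantically tight: a bi-interpretation of models $M,N\models\ZF$ presents them in particular as mutually interpretable with the isomorphism of $M$ onto its copy inside the interpreted copy of $N$ being $M$-definable, so (3) applies and $M\cong N$. And semantic tightness gives tightness: were $T\ne T'$ deductively closed extensions of $\ZF$ with a bi-interpretation, then over any $M\models T$ the theory-level bi-interpretation restricts to a bi-interpretation of the structure $M$ with the interpreted structure $M^{*}\models T'$, whence $M\cong M^{*}$ and $M\models T'$; thus every model of $T$ is a model of $T'$ and, by symmetry, conversely, so $T=T'$, a contradiction. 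So I would spend all the effort on (3).

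For solidity, fix $M,N\models\ZF$ with interpretations $I$ of $N$ in $M$ and $J$ of $M$ in $N$ witnessing $N^{I}\cong N$ and $M^{J}\cong M$, together with an $M$-definable isomorphism $\pi\colon\langle M,\in^{M}\rangle\cong(M^{J})^{N^{I}}$. The first step is to normalize the interpretations. Coding finite tuples by single sets and using Scott's trick to pick canonical representatives of definable equivalence classes --- both available in $\ZF$ --- I would replace $I$ and $J$ by equivalent one-dimensional interpretations without quotient. Then, because the interpreted structures satisfy Extensionality and Foundation and interpretations are set-like, I would take the Mostowski collapse \emph{inside the relevant model}: working in $M$, $N^{I}$ becomes a transitive class carrying the genuine $\in^{M}$, and likewise the iterated copies collapse, with all the collapse maps absorbed into $\pi$. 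Doing the collapse internally is legitimate even when $M$ or $N$ is ill-founded from the metatheory's standpoint --- it is $M$, respectively $N$, that believes the membership relations well-founded.

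The second step is a rigidity argument over the normalized picture. Suppose $(M^{J})^{N^{I}}$ has been collapsed to a transitive class $W\subseteq V^{M}$ with $W\models\ZF$ and $\pi\colon M\cong\langle W,\in^{M}\rangle$ still $M$-definable. Then $W$ cannot be a set, for $\pi$ would be an $M$-definable injection of $V^{M}$ into a set, violating Replacement. A proper transitive class model of $\ZF$ has its ordinals cofinal in $\Ord^{M}$ --- otherwise it sits inside some $V_{\theta}^{M}$ and is a set --- so $\Ord^{W}=\Ord^{M}$, $\pi$ restricts to an order-automorphism of $\Ord^{M}$ and hence to the identity on the ordinals, and an $\in$-induction finishes: if $x$ is $\in^{M}$-minimal with $\pi(x)\ne x$, then $\pi$ fixes every member of $x$, so $x$ and $\pi(x)$ have the same $\in^{M}$-extension and $\pi(x)=x$. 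Thus $\pi=\id$ and $W=\langle V^{M},\in^{M}\rangle$, i.e.\ the composite interpretation $J\circ I$ has become the identity. Tracing this back through the one-dimensional collapsed interpretation $I$ should then force $N^{I}$ itself to have $V^{M}$ as its domain with $\in^{M}$ as membership, giving $N^{I}=\langle M,\in^{M}\rangle$; since $N^{I}\cong N$, we would conclude $M\cong N$.

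The main obstacle is precisely that last back-propagation. The domain of $(M^{J})^{N^{I}}$ is a subclass of the domain of $N^{I}$, which is a subclass of $V^{M}$, so one is tempted to squeeze: if the rigidity argument makes the former all of $V^{M}$, the latter is forced to be $V^{M}$ and transitivity reconciles the membership relations. But the collapse that normalizes the inner copy $(M^{J})^{N^{I}}$ need not respect its being contained in the already-normalized outer copy $N^{I}$, so the squeeze is not automatic; for well-founded $M$ and $N$ one can push it through directly, but carrying it out uniformly --- or replacing it by a different device, perhaps invoking definable partial satisfaction classes and the reflection theorem in the manner of Visser's analysis of tight sequential theories --- is where the real work lies. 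Everything else is bookkeeping with the coding apparatus of $\ZF$ and with the definition of interpretation.
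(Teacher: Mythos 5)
Your reduction of (1) and (2) to solidity is fine and matches the paper. The proof of solidity itself, however, has two genuine problems. First, the normalization step is not justified where it matters most: you propose to Mostowski-collapse $N^{I}$ inside $M$ on the grounds that $N^{I}$ satisfies Extensionality and Foundation. But internal Foundation in $N^{I}$ only provides minimal elements for subclasses that are (coded by) \emph{sets of $N^{I}$}; it does not make the relation $\in^{N^{I}}$ well-founded over the classes of $M$, which is what the collapse requires. The hypothesis of solidity gives a definable isomorphism only on the $M$-side ($\pi\colon M\cong (M^{J})^{N^{I}}$), and that is exactly what lets one transfer well-foundedness to the \emph{innermost} copy, not to $N^{I}$ itself. (In the paper's proof the analogous transfer is done for $\Mbar$ as seen from $N$, and it leans precisely on the $M$-definability of $i$: pull an $N$-class $A\subseteq\Mbar$ back to $M$ via $i^{-1}$, take an $\in^{M}$-minimal element there, and push it forward.) A priori $M$ has no reason to see $\in^{N^{I}}$ as well-founded --- that it does is a \emph{consequence} of the theorem, so assuming it is circular. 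Second, once you restrict the collapse to the copy where it is legitimate, your rigidity argument, while correct, is essentially the uniqueness of the transitive collapse of something isomorphic to $M$: it tells you nothing about $N$. All of the content is then in the ``back-propagation'' from $(M^{J})^{N^{I}}=V^{M}$ to $N^{I}\cong M$, which you explicitly leave open. So the proof is not complete.

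For comparison, the paper closes exactly that gap by a different device: it never tries to realize $N$ as a transitive class of $M$. Instead it builds, by induction on the ordinals the three structures have isomorphically in common, unique isomorphisms $V_{\alpha}^{M}\cong V_{\alpha^{*}}^{N}\cong V_{\overline\alpha}^{\Mbar}$ (successor steps pass subsets back and forth using that subsets of $V_{\alpha}^{N}$ are classes of $M$ and can be pushed through $i$, and rigidity of transitive sets makes the maps cohere at limits). It then rules out the two ways the heights could disagree: if $\Ord^{N}$ were shorter, $M$ would see itself injected (via $i$ and $\Mbar\subseteq N\cong V_{\lambda}^{M}$) into a set, contradicting Replacement; if $\Ord^{M}$ were shorter, $N$ would have a truth predicate for the set $\Mbar$, which pulls back through the $M$-definable $i$ to an $M$-definable truth predicate on $M$, contradicting Tarski. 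Something playing the role of these two impossibility arguments is what your outline is missing.
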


We introduce the concept of \emph{semantic tightness} in this paper, since we find it very natural; Enayat had proved statements (1) and (3); we provide proofs in section~\S\ref{Section.Bi-interpretation-does-not-occur}. One should view solidity as a strengthening of semantic tightness---it amounts essentially to a one-sided version of semantic tightness, requiring the models to be isomorphic not only in instances of bi-interpretation, but also even in the case that only one of the two interpretation isomorphisms is definable, rather than both of them. Thus, statement (3) is a strengthening of statement (2). And statement (2) is an easy strengthening of statement (1), as we explain in corollary~\ref{Corollary.ZF-is-tight}.

The proofs of these theorems seem to use the full strength of \ZF, and Enayat had consequently inquired whether the solidity/tightness phenomenon somehow required the strength of \ZF\ set theory. In this paper,
we shall find support for that conjecture by establishing nontrivial instances of bi-interpretation in various natural weak set theories, including Zermelo-Fraenkel theory $\ZFCm$, without the power set axiom, and Zermelo set theory Z, without the replacement axiom.

\begin{maintheorems*}\
\begin{enumerate}
  \item $\ZFCm$ is not solid.
  \item $\ZFCm$ is not semantically tight, not even for well-founded models: there are well-founded models of $\ZFCm$ that are bi-interpretable, but not isomorphic.
  \item Indeed, it is relatively consistent with \ZFC\ that $\<H_{\omega_1},\in>$ and $\<H_{\omega_2},\in>$ are bi-interpretable and indeed bi-interpretation synonymous.
  \item $\ZFCm$ is not tight: there are distinct bi-interpretable extensions of $\ZFCm$.
  \item {\rm Z} is not semantically tight (and hence not solid): there are well-founded models of {\rm Z} that are bi-interpretable, but not isomorphic.
  \item Indeed, every model of \ZF\ is bi-interpretable with a transitive inner model of {\rm Z} in which the replacement axiom fails.
  \item {\rm Z} is not tight: there are distinct bi-interpretable extensions of Z.
\end{enumerate}
\end{maintheorems*}

These claims are made and proved in theorems \ref{Theorem.ZFCm-is-not-solid}, \ref{Theorem.Homega1-Homega_2-bi-interpretable}, \ref{Theorem.Homega1-Homega2-synonymy}, \ref{Theorem.ZFCm-is-not-tight}, and \ref{Theorem.Zermelo-not-solid-not-tight}. We shall in addition prove the following theorems on this theme:

\begin{enumerate}\sl\setcounter{enumi}{6}
  \item Nonisomorphic well-founded models of \ZF\ set theory are never mutually interpretable.
  \item The \Vaananen\ internal categoricity theorem does not hold for $\ZFCm$, not even for well-founded models.
\end{enumerate}

These are theorems \ref{Theorem.Well-founded-models-are-never-mutually-interpretable} and \ref{Theorem.ZFCm-double-relation-model}. Statement (8) concerns the existence of a model $\<M,\in,\barin>$ satisfying $\ZFCm(\in,\barin)$, meaning $\ZFCm$ in the common language with both predicates, using either $\in$ or $\barin$ as the membership relation, such that $\<M,\in>$ and $\<M,\barin>$ are not isomorphic.\goodbreak

\section{Interpretability in models}\label{Section.Interpretability-in-models}

Let us briefly review what interpretability means. The reader is likely familiar with the usual interpretation of the complex field $\<\C,+,\cdot,0,1,i>$ in the real field $\<\R,+,\cdot>$, where one represents the complex number $a+bi$ with the pair of real numbers $(a,b)$. The point is that complex number field operations are definable operations on these pairs in the real field. Conversely, it turns out that the real field is not actually interpreted in the complex field (see \cite{Hamkins.blog2020:Real-numbers-are-not-interpretable-in-the-complex-field} for an elementary account), but it is interpreted in the complex field equipped also with the complex conjugate operation $z\mapsto \bar z$, for in this case you can define the real line as the class of $z$ for which $z=\bar z$. The reader is also likely familiar with the usual interpretation of the rational field $\<\Q,+,\cdot>$ in the integer ring $\<\Z,+,\cdot>$, where one represents rational numbers as equivalence classes of integer pairs $(p,q)$, written more conveniently as fractions $\frac pq$, with $q\neq 0$, considered under the equivalence relation $\frac pq\equiv\frac rs\iff ps=qr$; one then defines the rational field structure by means of the familiar fractional arithmetic. Another example would be the structure of hereditarily finite sets $\<\HF,\in>$, which is interpreted in the standard model of arithmetic $\<\N,+,\cdot,0,1,<>$ by the Ackermann relation, the relation for which $n\mathrel{E} m$ just in case the $n$th binary digit of $m$ is $1$; this relation is definable in arithmetic and the reader may verify that $\<\HF,\in>\cong\<\N,E>$.\goodbreak

More generally, we interpret one model in another as follows.

\begin{definition}\label{Definition.Interpreted-in-for-models}\rm
A model $N=\<N,R,\ldots>$ in a first-order language $\mathcal{L}_N$ is \emph{interpreted} in another model $M$ in language $\mathcal{L}_M$, if for some finite $k$ there is an $M$-definable (possibly with parameters) class $N^*\of M^k$ of $k$-tuples and $M$-definable relations $R^{N^*}$ on $N^*$ for relation symbols $R$ in $\mathcal{L}_N$, as well as functions (defined via their graphs) and constants, and an $M$-definable equivalence relation $\simeq$ on $N^*$, which is a congruence with respect to those relations, for which there is an isomorphism $j$ from $N$ to the quotient structure $\langle N^*,R^{N^*},\ldots\rangle/\simeq$.
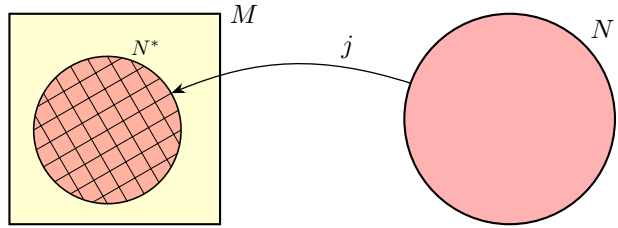
\begin{figure}[H]
\begin{tikzpicture}[scale=.7,thick]
% draw M
\drawM{$M$}
\begin{scope}[scale=.7,shift={(-.2,-.3)},rotate=30,semithick,every node/.style={scale=.8}]
\drawN{$N^*$}
\node (Nb) at (2,0) {};
\clip (0,0) circle (2);
\draw[black,step=.5,very thin] (-2,-2) grid (2,2);
\end{scope}
% draw N
\begin{scope}[shift={(7.5,0)},scale=1]
\drawN{$N$}
\node (Na) at (-1.88,.68) {};
\end{scope}
\draw[->,>=Stealth,bend right=20,line width=.5pt] (Na.center) to node[near start,above] {$j$} (Nb.center);
\end{tikzpicture}
\caption{Model $N$ is interpreted in model $M$}
\end{figure}
\end{definition}

In models supporting an internal encoding of finite tuples, such as the models of arithmetic or set theory, there is no need for $k$-tuples and one may regard $\Nbar\of M$ directly. In models of arithmetic or models of set theory with a definable global well-order, there is no need for the equivalence relation $\simeq$, since one can pick canonical least representatives from each class. Theorem \ref{Theorem.Interpretation-in-ZF-does-not-need-equivalence-relation} shows furthermore that in models of \ZF, even when there isn't a definable global well-order, one can still omit the need for the equivalence relation $\simeq$ by means of Scott's trick: replace every equivalence class with the set of its $\in$-minimal rank elements. Thus, in the model-theoretic terminology, the theory \ZF\ is said to \emph{eliminate imaginaries}. This trick doesn't necessarily work, however, in models of $\ZFCm$, without the power set axiom, since in this theory the class of minimal rank members could still be a proper class, although in \ZF\ it is always a set.

\begin{definition}\rm
Models $M$ and $N$ are \emph{mutually interpretable}, if each of them is interpreted in the other.
\begin{figure}[H]
\begin{tikzpicture}[scale=.7,thick]
% draw M
\drawM{$M$}
\node (Ma) at (2,-1) {};
\begin{scope}[scale=.7,shift={(-.2,-.3)},rotate=30,semithick,every node/.style={scale=.8}]
\drawN{$N^*$}
\node (Nb) at (2,0) {};
\clip (0,0) circle (2);
\draw[black,step=.5,very thin] (-2,-2) grid (2,2);
\end{scope}
% draw N
\begin{scope}[shift={(7.5,0)},scale=1]
\drawN{$N$}
\node (Na) at (-1.88,.68) {};
\begin{scope}[scale=.5,shift={(-.2,-.3)},rotate=30,semithick,every node/.style={scale=.8}]
\drawM{$M^*$}
\node (Mb) at (-2,0) {};
\draw[black,step=.667,very thin] (-2,-2) grid (2,2);
\end{scope}
\end{scope}
\draw[->,>=Stealth,bend right=20,line width=.5pt] (Na.center) to node[near start,above] {$j$} (Nb.center);
\draw[->,>=Stealth,bend right=20,line width=.5pt] (Ma.center) to node[near start,above] {$i$} (Mb.center);
\end{tikzpicture}
\caption{Models $M$ and $N$ are mutually interpreted}
\end{figure}
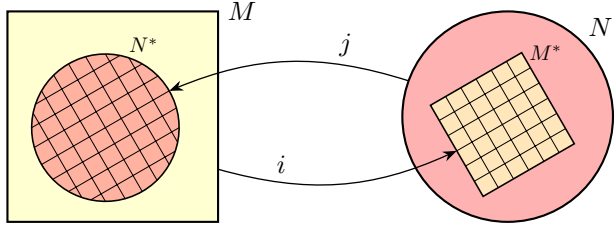
\end{definition}\goodbreak

Since (the quotient of) $N^*$ is isomorphic to $N$, we may find the corresponding copy of $M$ inside it, and similarly we may find the copy of $N$ inside $M^*$, as illustrated below, where we have now suppressed the representation of the equivalence relation.
\begin{figure}[H]
\begin{tikzpicture}[scale=.7,thick]
% draw M
\drawM{$M$}
\node (Ma) at (2,-1) {};
\node (Mc) at (2,-.5) {};
\node (Mp) at (1.6,-.75) {};
\begin{scope}[scale=.7,shift={(-.2,-.3)},rotate=30,semithick,every node/.style={scale=.8}]
\drawN{$N^*$}
\node (Nb) at (2,0) {};
\begin{scope}[scale=.5,shift={(-.4,.2)},every node/.style={scale=.5},semithick=false]
\drawM{$\Mbar$}
\node (Md) at (2,-1) {};
\node (Mq) at (.5,-.5) {};
\end{scope}
% \clip (0,0) circle (2);
% \draw[black,step=.5,very thin] (-2,-2) grid (2,2);
\end{scope}
% draw N
\begin{scope}[shift={(7.5,0)},scale=1]
\drawN{$N$}
\node (Na) at (135:2) {};
\node (Nc) at (160:2) {};
\node (Np) at (150:1.5) {};
\begin{scope}[scale=.5,shift={(-.2,-.3)},rotate=30,semithick,every node/.style={scale=.8}]
\drawM{$M^*$}
\node (Mb) at (-2,-1) {};
\begin{scope}[scale=.6,shift={(-.2,-.3)},every node/.style={scale=.5},semithick=false]
\drawN{$\Nbar$}
\node (Nd) at (170:2) {};
\node (Nq) at (135:.5) {};
\end{scope}
%  \draw[black,step=.667,very thin] (-2,-2) grid (2,2);
\end{scope}
\end{scope}
% arrows
\draw[->,>=Stealth,out=150,in=30,line width=.5pt] (Na.center) to node[near start,above] {$j$} (Nb.center);
\draw[->,>=Stealth,bend right=20,line width=.5pt] (Ma.center) to node[near start,above] {$i$} (Mb.center);
%\draw[-{[scale=.7]>},>=Stealth,out=-10,in=30,thick=false,thin,looseness=3.5] (Mc.center) to node[midway,above right,scale=.7] {$\tau\pi$} (Md.center);
%\draw[-{[scale=.7]>},>=Stealth,out=160,in=-165,thick=false,thin,looseness=3.5] (Nc.center) to node[near start,above,scale=.7] {$\pi\tau$} (Nd.center);
\draw[-{[scale=.5]>},>=Stealth,out=10,in=20,looseness=3,very thin,shorten >=3pt] (Mp) node[scale=.7] {$a$} to node[pos=.4,right,scale=.6] {$ji$} (Mq)  node[scale=.6] {$\bar a$};
\draw[-{[scale=.5]>},>=Stealth,out=160,in=180,looseness=3.5,very thin,shorten >=2pt,shorten <=-1pt] (Np) node[scale=.7] {$u$} to node[pos=.4, left,scale=.6] {$ij$} (Nq)  node[scale=.6] {$\bar u$};
\end{tikzpicture}
\caption{Iterating the interpretations of models $M$ and $N$}
%\label{Figure.Iterated-mutual-interpretation}
\end{figure}
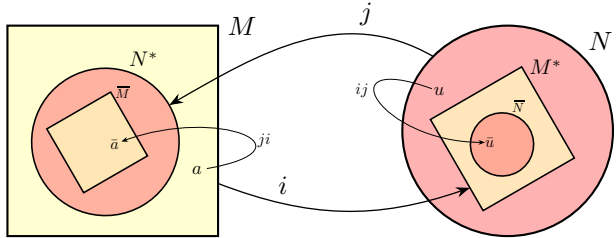

One may simply compose the isomorphisms $i$ and $j$ to form an isomorphism $ji$ between $M$ and $\Mbar$, which is now a definable subset of $M$ (although the map $ji$ may not be definable). Because in the general case these are quotient structures, one should think of the map $ji$ as a relation rather than literally a function, since each object $a$ in $M$ is in effect associated with an entire equivalence class $\bar a$ in the quotient structure of $\Mbar$. Similarly, the composition $ij$ is an isomorphism of the structure $N$ with $\Nbar$, which is now definable in $N$.

It will be instructive to notice that we have little reason in general to expect the map $ji$ to be definable in $M$, and this issue is the key difference between the mutual interpretation of models and their bi-interpretation.

\begin{definition}\rm
Models $M$ and $N$ are \emph{bi-interpretable}, if they are mutually interpretable in such a way that the isomorphisms $ji:M\cong\Mbar$ and $ij:N\cong\Nbar$ arising by composing the interpretation maps are definable in the original models $M$ and $N$, respectively.
\begin{figure}[H]
\begin{tikzpicture}[scale=.7,thick]
% draw M
\drawM{$M$}
\node (Ma) at (2,-1) {};
\node (Mc) at (2,-.5) {};
\node (Mp) at (1.6,-.75) {};
\begin{scope}[scale=.7,shift={(-.2,-.3)},rotate=30,semithick,every node/.style={scale=.8}]
\drawN{$N^*$}
\node (Nb) at (2,0) {};
\begin{scope}[scale=.5,shift={(-.4,.2)},every node/.style={scale=.5},semithick=false]
\drawM{$\Mbar$}
\node (Md) at (2,-1) {};
\node (Mq) at (.5,-.5) {};
\end{scope}
% \clip (0,0) circle (2);
% \draw[black,step=.5,very thin] (-2,-2) grid (2,2);
\end{scope}
% draw N
\begin{scope}[shift={(7.5,0)},scale=1]
\drawN{$N$}
\node (Na) at (135:2) {};
\node (Nc) at (160:2) {};
\node (Np) at (135:1.65) {};
\begin{scope}[scale=.5,shift={(-.2,-.3)},rotate=30,semithick,every node/.style={scale=.8}]
\drawM{$M^*$}
\node (Mb) at (-2,-1) {};
\begin{scope}[scale=.6,shift={(-.2,-.3)},every node/.style={scale=.5},semithick=false]
\drawN{$\Nbar$}
\node (Nd) at (170:2) {};
\node (Nq) at (135:.4) {};
\end{scope}
%  \draw[black,step=.667,very thin] (-2,-2) grid (2,2);
\end{scope}
\end{scope}
% arrows
\draw[->,>=Stealth,bend right, line width=.5pt] (Na.center) to node[near start,above] {$j$} (Nb.center);
\draw[->,>=Stealth,bend right=20,line width=.5pt] (Ma.center) to node[near start,above] {$i$} (Mb.center);
%\draw[-{[scale=.7]>},>=Stealth,out=-10,in=30,thick=false,thin,looseness=3.5] (Mc.center) to node[midway,above right,scale=.7] {$\tau\pi$} (Md.center);
%\draw[-{[scale=.7]>},>=Stealth,out=160,in=-165,thick=false,thin,looseness=3.5] (Nc.center) to node[near start,above,scale=.7] {$\pi\tau$} (Nd.center);
\draw[-{[scale=.5]>},>=Stealth,bend right,very thin,shorten >=2pt,shorten <=-1pt] (Mp) node[scale=.7] {$a$} to node[midway, above,scale=.5] {$ji$} (Mq)  node[scale=.6] {$\bar a$};
\draw[-{[scale=.5]>},>=Stealth,bend right,very thin,shorten >=2pt,shorten <=-1pt] (Np) node[scale=.7] {$u$} to node[pos=.1, right,scale=.5] {$ij$} (Nq)  node[scale=.6] {$\bar u$};
\end{tikzpicture}
\caption{Models $M$ and $N$ are bi-interpreted}
\label{Figure.Bi-interpretation}
\end{figure}
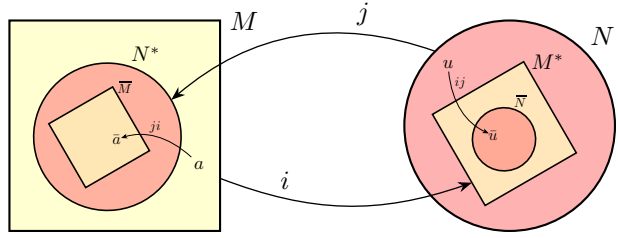
\end{definition}\goodbreak

A somewhat cleaner picture emerges, as indicated below, if we should simply identify the model $N$ with its isomorphic copy $N^*$ and $M^*$ with $\Mbar$. Let us now use $i$ to denote the isomorphism of $M$ with its copy $\Mbar$ inside $N$, and $j$ is the isomorphism of $N$ with its copy $\Nbar$ inside $\Mbar$. This picture applies with either mutual interpretation or bi-interpretation, the difference being that in the case of bi-interpretation, the isomorphisms $i$ and $j$ are definable in $M$ and $N$, respectively. Furthermore, by iterating these interpretations, in the case either of mutual interpretation or bi-interpretation, one achieves a fractal-like nested sequence of definable structures, with isomorphisms at each level. In the case of bi-interpretation, all these maps are definable in $M$, and any of the later maps is definable inside any of the successive copies of $M$ or $N$ in which it resides.

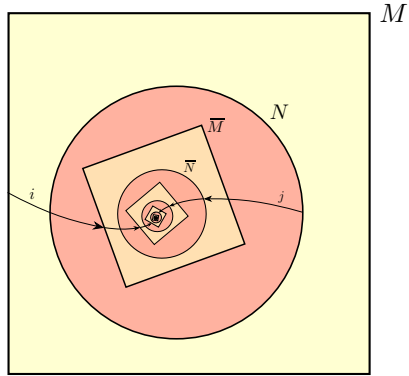
\begin{figure}[H]%{wrapfigure}{i}{.45\textwidth}\hfill
\begin{tikzpicture}[scale=1.2,thick]
% Nested iteration
\begin{scope}[thick,shift={(7.5,0)}]
% draw M
\drawM{$M$}
\begin{scope}[scale=.7,shift={(-.2,-.3)},every node/.style={scale=.8},semithick]
\drawN{$N$}
\begin{scope}[scale=.5,shift={(-.4,.2)},rotate=20,every node/.style={scale=.6},line width=.2mm]
\drawMbi{$\Mbar$}{$i$}{1}
\begin{scope}[scale=.7,shift={(-.2,-.3)},every node/.style={scale=.5},thin]
\drawNbi{$\Nbar$}{$j$}{.7}
\begin{scope}[scale=.5,shift={(-.4,.2)},rotate=20,very thin]
\drawMbi{}{}{.5}
\begin{scope}[scale=.7,shift={(-.2,-.3)},ultra thin]
\drawNbi{}{}{.4}
\begin{scope}[scale=.5,shift={(-.4,.2)},rotate=20,line width=.02mm]
\drawMbi{}{}{.3}
\begin{scope}[scale=.7,shift={(-.2,-.3)},-{[scale=.1]>}]
\drawNbi{}{}{.25}
\begin{scope}[scale=.5,shift={(-.4,.2)},rotate=20,line width=.01mm]
\drawM{}{}
\begin{scope}[scale=.7,shift={(-.2,-.3)}]
\drawN{}
\begin{scope}[scale=.5,shift={(-.4,.2)},rotate=20,line width=.005mm]
\drawM{}{}
\begin{scope}[scale=.7,shift={(-.2,-.3)}]
\drawN{}
\end{scope}
\end{scope}
\end{scope}
\end{scope}
\end{scope}
\end{scope}
\end{scope}
\end{scope}
\end{scope}
\end{scope}
\end{scope}
\end{scope}
\end{tikzpicture}
\caption{Iterated bi-interpretations}\label{Figure.Iterated-interpretation}
\end{figure}

An even stronger connection between models is the notion of bi-interpretation synonymy. This relation was introduced by Bouvere in \cite{Bouvere1965} and recently deepened by Friedman and Visser in \cite{friedman2014bi}.

\begin{definition}\rm
Models $M$ and $N$ are \emph{bi-interpretation synonymous}, if there is a bi-interpretation for which (i) the domains of the interpreted structures are in each case the whole structure; and (ii) the equivalence relations used in the interpretation are the identity relations on $M$ and $N$.
\end{definition}

In remarks after theorem \ref{Theorem.Vaananen}, we explain how every instance of bi-interpretation between models of \ZF\ can be transformed to an instance of bi-interpretation synonymy.

\section{Interpretability in theories}\label{Section.Interpretability-in-theories}

Let us now explain how interpretability works in theories, as opposed to models. At bottom, an interpretation of one theory in another amounts to a uniform method of interpreting a model of the first theory in any model of the second theory. Specifically, we interpret one theory $T_1$ in another theory $T_2$ by providing a way to translate the $T_1$ language and structure into the language and structure available in $T_2$, in such a way that the translation of every $T_1$ theorem is provable in $T_2$.

In somewhat more detail, the interpretation $I$ of theory $T_1$ in language $\mathcal{L}_1$ into theory $T_2$ in language $\mathcal{L}_2$ should provide, first of all, an $\mathcal{L}_2$-formula $U(\bar x)$ that will define the interpreted domain of $k$-tuples $\bar x=(x_1,\dots,x_k)$ and an $\mathcal{L}_2$-expressible relation $\bar x =^I\bar y$, the interpretation of equality, that $T_2$ proves is an equivalence relation on tuples in $U$. Next, for each relation symbol $R$ of $\mathcal{L}_1$, the interpretation should provide a translation $R^I$ as an $\mathcal{L}_2$ formula of the same arity on $U$ as $R$ has in $\mathcal{L}_1$, and $T_2$ should prove that this relation is well-defined modulo $=^I$. Finally, functions are to be handled by considering their graphs $f(\bar x)=y$ as definable relations and interpreting these graph relations, but with the proviso that $T_2$ proves that the interpreted relation is indeed well-defined and functional on $U$ modulo $=^I$. Ultimately, therefore, the theory $T_2$ will prove that $=^I$ is a congruence with respect to the interpreted relations $R^I$ and functions $f^I$.\goodbreak

Having thus interpreted the atomic $\mathcal{L}_1$ structure, one may naturally extend the interpretation to all $\mathcal{L}_1$ assertions as follows.
\begin{itemize}
  \item $(\varphi\wedge\psi)^I=\varphi^I\wedge\psi^I$
  \item $(\neg\varphi)^I=\neg\varphi^I$
  \item $(\exists x\, \varphi(x))^I=\exists\bar x\ U(\bar x)\wedge \varphi^I(\bar x)$
\end{itemize}
Finally, for this to be an interpretation of $T_1$ in $T_2$, we insist that $T_2$ prove that $T_1$ holds for the interpreted structure, or in other words, that
 $$T_1\proves\varphi\qquad\text{ implies }\qquad T_2\proves \varphi^I.$$
Because the interpretation preserves the boolean and quantifier structure of interpreted formulas, it suffices that $T_2$ should prove the interpretation $\varphi^I$ of each axiom $\varphi$ of $T_1$. It is clear that if $T_1$ is interpreted in $T_2$ by interpretation $I$, then in any model $M\satisfies T_2$ we may use the interpretation to define a model $N=\<U,R^I,f^I,\ldots>/=^I$ of $T_1$. The domain of $N$ will consist of the quotient of $U$ by $=^I$ as $M$ sees it, and the relations $R$ and functions $f$ of $N$, if any, will be exactly those defined by $R^I$ and $f^I$ via the interpretation.

\begin{definition}\rm\
\begin{enumerate}
 \item Two theories $T_1$ and $T_2$ are \emph{mutually interpretable}, if each of them is interpretable in the other.
 \item Two theories $T_1$ and $T_2$ are \emph{bi-interpretable}, in contrast, if they are mutually interpretable by interpretations $I$ and $J$ respectively, which are provably invertible, in that the theory $T_1$ proves that the universe is isomorphic, by a definable isomorphism map, to the model resulting by first interpreting via $J$ to get a model of $T_2$ and then interpreting via $I$ to get a model of $T_1$ inside that model; and similarly the theory $T_2$ proves that its universe is definably isomorphic to the model obtained by first interpreting via $I$ to get a model of $T_1$ and then inside that model via $J$ to get a model of $T_2$.
\end{enumerate}
\end{definition}

With mutual interpretation of theories, one can start with a model $M$ of $T_1$ and then use $J$ to define within it a model $N=J^M$ of $T_2$, which can then define a model $M'=I^N$ of $T_1$ again. With mutual interpretability, there is no guarantee that $M'$ and $M$ are isomorphic or even elementarily equivalent, beyond the basic requirement that both are models of $T_1$. The much stronger requirements of bi-interpretability, in contrast, ensure that $M'$ and $M$ are isomorphic, and furthermore isomorphic by a definable isomorphism relation, which the theory $T_1$ proves is an isomorphism. And similarly when interpreting in models of $T_2$. Thus, with bi-interpretation, each theory sees how its own structure is faithfully copied via the definable isomorphisms $f_1$ and $f_2$ under iterations of the interpretations. With bi-interpretability, it therefore follows that \begin{enumerate}
  \item For every $\mathcal{L}_1$ assertion $\alpha$
  \begin{equation*}
  T_1 \vDash \forall\bar x\ \alpha(x_1,\ldots,x_n) \sss {\alpha^J}^I(f_1(x_1), f_1(x_2), \ldots, f_1(x_n)).
  \end{equation*}
  \item For every $\mathcal{L}_2$ assertion $\beta$
  \begin{equation*}
  T_2 \vDash \forall\bar y\ \beta(y_1,\ldots,y_m) \sss {\beta^I}^J(f_2(y_1), f_2(y_2), \ldots, f_2(y_m)).
  \end{equation*}
\end{enumerate}
In light of the quotients by the equivalence relations, the functions $f_1$ and $f_2$ are more properly thought of as relations $f_1(x)=\bar y$ well-defined with respect to those relations; they need not be functional on points, but only in the quotient.

\section{Mutual interpretation of diverse set theories}

Let us briefly establish the mutual interpretability phenomenon in set theory.

\begin{theorem}The following theories are pairwise mutually interpretable.\label{Theorem.Set-theories-are-mutually-interpretable}
  \begin{enumerate}
    \item \ZF
    \item \ZFC
    \item $\ZFC+\GCH$
    \item $\ZFC+V=L$
    \item $\ZF+\neg\AC$
    \item $\ZFC+\neg\CH$
    \item $\ZFC+\MA+\neg\CH$
    \item $\ZFC+\mathfrak{b}<\mathfrak{d}$
    \item Any extension of \ZF\ provably holding in a definable inner model or provably forceable by set forcing over such an inner model.
  \end{enumerate}
\end{theorem}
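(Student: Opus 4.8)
The plan is to break the pairwise claim into two easy halves and join them by composing interpretations. First, every theory $T$ on the list is an extension of $\ZF$ --- for item~(9) this is part of the hypothesis, and for the others it is immediate --- so $T$ interprets $\ZF$ by the trivial identity interpretation. Since interpretations compose, it therefore suffices to interpret each listed theory $T_j$ in $\ZF$: composing the identity interpretation of $\ZF$ in $T_i$ with an interpretation of $T_j$ in $\ZF$ gives $T_j$ interpreted in $T_i$, for any two entries $T_i,T_j$.

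For $\ZFC$, $\ZFC+\GCH$, $\ZFC+V=L$, and the inner-model form of item~(9), the interpretation in $\ZF$ is immediate, since the constructible universe $L$ (respectively, the given definable inner model) is a parameter-free definable class satisfying the target theory, so $\varphi\mapsto\varphi^L$ works. The content is in the forcing-based entries $\ZF+\neg\AC$, $\ZFC+\neg\CH$, $\ZFC+\MA+\neg\CH$, $\ZFC+\mathfrak b<\mathfrak d$, and the forceable form of item~(9). The obstacle here is that a model of $\ZF$ carries no generic filter and may lack enough choice even to produce an ultrafilter on a complete Boolean algebra. My plan is to route the construction through $L$ and replace genericity by a definable ultrafilter --- this is the Boolean ultrapower method mentioned in the introduction. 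Working inside $L$, fix a forcing notion $\P$ with $\one\forces_\P\varphi$, over models of $V=L$, for every axiom $\varphi$ of $T$: take $\P=\Add(\omega,\omega_2)$ for both $\neg\CH$ and $\mathfrak b<\mathfrak d$ (over a model of $\GCH$ this forces $2^{\aleph_0}=\aleph_2$ and leaves $\mathfrak b=\aleph_1<\aleph_2=\mathfrak d$), the Solovay--Tennenbaum finite-support ccc iteration of length $\omega_2$ for $\MA+\neg\CH$, and the symmetric Cohen system adding $\omega$ mutually generic subsets of $\omega$ --- with the group of index-permutations and the normal filter generated by finite-support stabilizers --- for $\neg\AC$. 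Put $\B=\RO(\P)^L$, a definable complete Boolean algebra, and let $U\in L$ be the ultrafilter on $\B$ built by recursion along the canonical well-order of $L$, extending the filter at each stage by the $<_L$-least compatible element of $\B$ when possible. The interpreted structure is $(V^\B/U)^L$ --- in the $\neg\AC$ case, $(\mathrm{HS}^\B/U)^L$, using only the hereditarily symmetric $\B$-names --- where $\tau\simeq_U\sigma$ iff $\boolval{\tau=\sigma}\in U$ and $\tau\mathrel{E_U}\sigma$ iff $\boolval{\tau\in\sigma}\in U$. This is a parameter-free definable class of $L$, hence of the ambient model of $\ZF$.

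What remains is a {\L}o\'s-style theorem: $(V^\B/U)^L\satisfies\varphi(\bar\tau)$ iff $\boolval{\varphi(\bar\tau)}\in U$, proved by induction on $\varphi$ --- the atomic clause is the definition of $\simeq_U$ and $E_U$, the propositional clauses use that $U$ is an ultrafilter, and the existential clause uses the maximum principle in $V^\B$ (there is a $\B$-name $\rho$ with $\boolval{\varphi(\rho)}=\boolval{\exists x\,\varphi(x)}$, obtained by mixing witnesses over a maximal antichain). Since each axiom $\varphi$ of $T$ has $\boolval\varphi=\one\in U$, we conclude $(V^\B/U)^L\satisfies T$. For the forceable case of item~(9) one repeats the argument inside the given definable inner model $W$ in place of $L$, allowing the interpretation a parameter --- an ultrafilter on $\RO(\P)^W$ --- when $W$ need not carry a definable well-order; and item~(1) is of course trivial.

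I expect the delicate step to be the $\neg\AC$ case, where $V^\B$ must be replaced by the class $\mathrm{HS}$ of hereditarily symmetric names. There one needs the symmetric form of the maximum principle --- if $p\forces\exists x\,\varphi(x)$ with hereditarily symmetric parameters then $p\forces\varphi(\dot b)$ for some hereditarily symmetric $\dot b$, which is exactly the lemma underlying Collection in symmetric models --- together with the closure of $\mathrm{HS}$ under the name operations occurring in the induction, in order to carry the {\L}o\'s theorem through. Granting that, $(\mathrm{HS}^\B/U)^L$ models $\ZF$, and $\neg\AC$ holds there because, by symmetry, the set of the $\omega$ added Cohen reals has no well-ordering. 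The remaining verifications --- that the stated forcings achieve the stated consequences over $L$ --- are standard.
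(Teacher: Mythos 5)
Your proposal is correct and follows essentially the same route as the paper: reduce everything to interpreting each theory in $\ZF$ via the constructible universe, and handle the forceable theories by the Boolean ultrapower $L^{\B}/U$ with the $L$-least ultrafilter $U$ on $\RO(\P)^L$, justified by the Boolean ultrapower \Los\ theorem. The only place you go beyond the paper's sketch is the explicit hereditarily-symmetric-names treatment of $\ZF+\neg\AC$, which the paper subsumes under its general clause about theories forceable over (or holding in inner models of) $L$; both versions work.
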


\begin{proof}
In many of these instances, the interpretation is obtained by defining a suitable inner model of the desired theory. For example, in any model of \ZF, we can define the constructible universe, and thereby find an interpretation of $\ZFC+V=L$ and hence $\ZFC+\GCH$ and so forth in \ZF. Some of the interpretations involve forcing, however, which we usually conceive as a method for constructing outer models, rather than defining models inside the original model. Nevertheless, one can use forcing via the Boolean ultrapower method to define interpreted models of the forced theory inside the original model. Let us illustrate the general method by explaining how to interpret the theory $\ZFC+\neg\CH$ in \ZF. In \ZF, we may define $L$, and thereby define a model of \ZFC\ with a definable global well-order. Inside that model, consider the forcing notion $\Add(\omega,\omega_2)$ to add $\omega_2$ many Cohen reals. Let $\B$ be the Boolean completion of this forcing notion in $L$, and let $U\of\B$ be the $L$-least ultrafilter on this Boolean algebra. Using the Boolean ultrapower method, we define the quotient structure $L^\B/U$ on the class of $\B$-names by the relations:
\begin{eqnarray*}
  \sigma=_U\tau &\Iff& \boolval{\sigma=\tau}\in U;\\
  \sigma\in_U\tau &\Iff& \boolval{\sigma\in\tau}\in U.
\end{eqnarray*}
The model $L^\B/U$ consists of the $=_U$ equivalence classes of $\B$-names in $L$ under the membership relation $\in_U$. The Boolean ultrapower \Los\ theorem shows that $L^\B/U$ satisfies every statement $\varphi$ whose Boolean value $\boolval{\varphi}$ is in $U$, and so this is a model of $\ZFC+\neg\CH$. We should like to emphasize that there is no need in the Boolean ultrapower construction for the ultrafilter $U$ to be generic in any sense, and $U\in L$ is completely fine (see extensive explanation in \cite[\S2]{HamkinsSeabold:BooleanUltrapowers}). By using the $L$-least such ultrafilter $U$ on $\B$, therefore, we eliminate the need for parameters---the Boolean ultrapower quotient $L^\B/U$ is a definable interpreted model of $\ZFC+\neg\CH$ inside the original model, as desired.

The same method works generally. Any forceable theory will hold in an interpreted model, using the forcing notion and the ultrafilter $U$ in the original model as parameters; when forcing over $L$ or $\HOD$ one may use the corresponding definable well-order to eliminate the need for parameters, as we did above. Any theory that is provably forceable over a definable inner model will be forceable over $L$ and therefore amenable to this parameter-elimination method.
\end{proof}

Apter, Gitman and Hamkins \cite{ApterGitmanHamkins2012:InnerModelsWithLargeCardinals} similarly establish in numerous instances that one can find transitive inner models $\<M,\in>$ of certain large cardinal theories usually obtained by forcing. For example, if there is a supercompact cardinal, then there is a definable transitive inner model with a Laver-indestructible supercompact cardinal, and another inner model with a non-indestructible supercompact cardinal, and another with a strongly compact cardinal that was also the least measurable cardinal, and so on.

As we emphasized in the introduction of this article, theorem \ref{Theorem.Set-theories-are-mutually-interpretable} is concerned with \emph{mutual} interpretation rather than \emph{bi-}interpretation. The interpretation methods used in this theorem are not invertible, and when following an interpretation, one cannot in general get back to the original model. Every interpretation of one set theory in another will inevitably involve a loss of set-theoretic information in the models in which it is undertaken. The main lesson of Enayat's theorem (theorem \ref{ali-theorem}) is that this problem is unavoidable: in fact, none of the theories above are bi-interpretable and there is no nontrivial bi-interpretation phenomenon to be found in set theory at the strength of \ZF.

One might have hoped to invert the forcing extension interpretations by means of the ground-model definability theorem (\cite{Laver2007:CertainVeryLargeCardinalsNotCreated}, see also \cite{FuchsHamkinsReitz2015:Set-theoreticGeology}), which asserts that every ground model $M$ is definable (using parameters) in its forcing extensions $M[G]$ by set forcing. That is, since we have interpreted the theory of the forcing extension, can't we reinterpret the ground model by means of the ground-model definability theorem? The problem with this idea is that the forcing Boolean ultrapower model $M^{\B}/U$ that we used in theorem \ref{Theorem.Set-theories-are-mutually-interpretable} is not actually a forcing extension of $M$, but rather a forcing extension of its own ground model $\check M/U$, which in general is not the same as $M$, although it is an elementary extension of $M$ by the Boolean ultrapower map $i:M\to \check M/U$. This map is an isomorphism only when $U$ is $M$-generic, which is not true here for nontrivial forcing because $U\in M$. Indeed, the Boolean ultrapower model $\check M/U$ is usually ill-founded, even when $M$ is well founded (although there can be instances of well-founded Boolean ultrapowers connected with very large large cardinals; see \cite{HamkinsSeabold:BooleanUltrapowers}). Nevertheless, the method does allow us to mutually interpret the theory of $M$ with the theory of what it would be like in a forcing extension of $M$ after forcing with $\B$, as explained by the following theorem:

\begin{theorem}
For any model $M\satisfies\ZFC$ and any notion of forcing $\B\in M$, the theory of $M$ with constants for every element of $M$ is mutually interpretable with the theory, in the forcing language with constants for every $\B$-name in $M$, asserting that the universe is a forcing extension via $\B$ of a ground model with the theory of $M$.
\end{theorem}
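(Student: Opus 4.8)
The plan is to exhibit interpretations in both directions, with the forcing direction handled by the Boolean ultrapower exactly as in the proof of Theorem~\ref{Theorem.Set-theories-are-mutually-interpretable} and the reverse direction by the Ground Model Definability Theorem. First I fix notation for the two theories. Let $T_1=\Th\<M,\in,(a)_{a\in M}\>$ be the elementary diagram of $M$, so that a model of $T_1$ is, up to identifying constants, the range of an elementary embedding $\iota\colon M\prec\tilde M$. Let $T_2$ be the theory in the forcing language with a constant $\dot G$ for the generic filter and a constant $c_\sigma$ for every $\B$-name $\sigma$ in $M$, whose axioms are: (i) $\ZFC$; (ii) the geology assertion that the ground model $W$---definable with a parameter $c_{\check\rho}$ for a suitable $\rho\in M$ by the Ground Model Definability Theorem (\cite{Laver2007:CertainVeryLargeCardinalsNotCreated}, \cite{FuchsHamkinsReitz2015:Set-theoreticGeology})---is a ground of $V$ via the poset $c_{\check\B}$, with $\dot G$ the corresponding $W$-generic filter and $V=W[\dot G]$; (iii) the scheme $\varphi^W(c_{\check a_1},\dots,c_{\check a_n})$ for each $\varphi(a_1,\dots,a_n)\in T_1$, which says that $W$ carries the theory of $M$ via the constants $c_{\check a}$; and (iv) the scheme $c_\sigma=\val(c_{\check\sigma},\dot G)$ for each $\B$-name $\sigma$ in $M$. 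Axiom (iv) makes sense because $\check\sigma$ is itself a $\B$-name in $M$ (the canonical name for $\sigma$) and hence is among the constants, and $T_2$ proves that $c_{\check\sigma}$ lies in $W$ and is the $\B$-name the ground obtains as the image of $\sigma$; so $c_\sigma$ is forced to denote the value at $\dot G$ of ``the name $\sigma$, as computed in the ground.'' The constant $\dot G$ may in fact be identified with $c_{\dot G_\B}$, where $\dot G_\B\in M$ is the canonical name for the generic.

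To interpret $T_2$ in $T_1$, I work inside a model $\tilde M\satisfies T_1$ and form the Boolean ultrapower, just as in Theorem~\ref{Theorem.Set-theories-are-mutually-interpretable}. Fix an ultrafilter $U_0\in M$ on $\B$; its constant $c_{U_0}$ serves as a parameter, so in $\tilde M$ we have the ultrafilter $U=c_{U_0}^{\tilde M}$ on $\B$. Let $N$ be the quotient of the class of $\B$-names of $\tilde M$ by $\sigma=_U\tau\iff\boolval{\sigma=\tau}^\B\in U$, with membership $\sigma\in_U\tau\iff\boolval{\sigma\in\tau}^\B\in U$, and interpret $\dot G$ as $[\dot G_\B]_U$ and each $c_\sigma$ as $[\sigma]_U$; this is a $\tilde M$-definable structure. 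The Boolean ultrapower \Los\ theorem (see \cite[\S2]{HamkinsSeabold:BooleanUltrapowers}) gives $N\satisfies\ZFC$, since each axiom has Boolean value $\one\in U$, and it identifies $N$ with $\check M_U[G_U]$, where $\check M_U=\ran(j_U)$ is the Boolean ultrapower copy of $\tilde M$ and $G_U=[\dot G_\B]_U$ is $\check M_U$-generic for $j_U(\B)$; by Ground Model Definability applied with Boolean value $\one$, this $\check M_U$ is exactly the ground $W^N$ of axiom (ii). Since $j_U$ is elementary and $j_U(a^{\tilde M})=[(\check a)^{\tilde M}]_U=c_{\check a}^{\,N}$ (using that $M$, hence $\tilde M$, knows $\check a$ to be the canonical name for $a$), the structure $\<W^N,\in^N,(c_{\check a}^{\,N})_a\>$ is isomorphic via $j_U$ to $\tilde M$ and so satisfies $T_1$, verifying scheme (iii); and the name-value identity $[\sigma]_U=\val^N(j_U(\sigma),G_U)$ verifies scheme (iv). Thus $T_1$ proves the translation of every axiom of $T_2$.

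To interpret $T_1$ in $T_2$, I simply pass to the ground model: in a model $\hat N\satisfies T_2$ I take the interpreted domain to be $W^{\hat N}$ with the inherited membership relation, and I interpret each constant $a$ of $T_1$ by the constant $c_{\check a}$ of $T_2$. Since the value of a check-name does not depend on the generic, $T_2$ proves $c_{\check a}\in W$, so this interpretation is well-defined; and for each $\varphi\in T_1$ the translation is precisely the axiom $\varphi^W(c_{\check a_1},\dots)$ of $T_2$ from scheme (iii). Hence $T_2$ proves the translation of every sentence of $T_1$, and the two theories are mutually interpretable.

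The only delicate point is the bookkeeping for the constants: the two ideas that make it go through are that $\check\sigma$ is again a $\B$-name in $M$---so that $T_2$ can speak of ``the name $\sigma$ inside the ground'' via the constant $c_{\check\sigma}$---and that the Boolean ultrapower map sends $\check a$ to the canonical name for $a$, so that the constants $c_{\check a}$ do double duty, marking both a copy of $M$ inside the ground and the parameters witnessing that that copy has the theory of $M$. I do not expect this mutual interpretation to be upgradable to a bi-interpretation, and for the reason explained just before the theorem: the Boolean ultrapower of $M$ is a forcing extension not of $M$ but of the (generally ill-founded) Boolean ultrapower $\check M/U$, so iterating the interpretations loses set-theoretic information and the composite cannot be a definable isomorphism back to $M$.
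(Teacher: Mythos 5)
Your proof is correct and follows the same core strategy as the paper: the Boolean ultrapower $M^{\B}/U$ with an ultrafilter $U\in M$ interprets the forcing-extension theory inside the theory of $M$, and passing to the ground model interprets the theory of $M$ back. The one place you diverge is in the formalization of the second theory: the paper takes it to be the theory of the naturalist account of forcing, whose language simply has a \emph{predicate symbol} $\check M$ for the ground model, so no appeal to ground-model definability is needed at all---the ground is named rather than defined. You instead define the ground geologically from a parameter $c_{\check\rho}$ via the ground-model definability theorem. This also works, and your bookkeeping for the constants (in particular the observation that $\check\sigma$ is itself a $\B$-name, so $T_2$ can assert $c_\sigma=\val(c_{\check\sigma},\dot G)$, and that the Boolean ultrapower map sends $a$ to the class of $\check a$) is a genuine elaboration of details the paper leaves implicit; but it buys the extra obligation of checking that the Boolean ultrapower correctly instantiates the definability parameter with Boolean value $\one$, which you do address. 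The paper's predicate-symbol formulation avoids that heavier machinery entirely, at the cost of a slightly richer language for $T_2$. Your closing remark that the mutual interpretation should not upgrade to a bi-interpretation, because the Boolean ultrapower is a forcing extension of $\check M/U$ rather than of $M$, matches the paper's discussion preceding the theorem.
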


\begin{proof}
The latter theory, describing what it would be like in a forcing extension of $M$ by $\B$, is the same theory as used in the naturalist account of forcing in \cite{Hamkins2012:TheSet-TheoreticalMultiverse}. The theory has a predicate symbol $\check M$ for an inner model and asserts that this satisfies the same theory as $M$ and that the universe is a forcing extension $\check M[G]$ for some $\check M$-generic filter $G\of\check\B$. To be clear, we are not saying that the model $M$ is mutually interpretable with some actual forcing extension model $M[G]$, but rather only that the theory of $M$ is mutually interpretable with the theory expressing what it is like in such a forcing extension. We allow parameters in the interpretation definitions, although in many cases these are eliminable. Assume that $\B$ is a complete Boolean algebra in $M$, and let $U\of\B$ be an ultrafilter in $M$. Inside $M$, we may define the forcing Boolean ultrapower $M^{\B}/U$ as described above, and this is a model of the theory asserting (in the forcing language) that the universe is a forcing extension of $\check M$ by $\check \B$. That this is true is part of the theory of $M$, and so this interpretation works inside any model of the theory of $M$. Conversely, inside any model of the latter theory, we can define the ground model $\check M$, which will be a model of the theory of $M$.
\end{proof}

\section{Further background on interpreting in models of set theory}\label{Section.Background-on-interpretating-in-set-theory}

Before getting to the main results, let us prove some further useful background material on interpretations in models of set theory. We have already mentioned in the remarks after definition \ref{Definition.Interpreted-in-for-models} that in models of set theory, we do not need to use $k$-tuples in interpretation, since we have definable pairing functions. But also, it turns out that when interpreting in a model of \ZF, we do not need the equivalence relation:

\begin{theorem}\label{Theorem.Interpretation-in-ZF-does-not-need-equivalence-relation}
If a structure $A$ is interpreted in a model of \ZF\ set theory, then there is an interpretation in which the equivalence relation is the identity. \end{theorem}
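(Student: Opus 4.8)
The plan is to use Scott's trick to replace each $=^I$-equivalence class by a genuine set coded from it, so that the resulting representatives are distinct exactly when the original classes were. Suppose $A$ is interpreted in $M\models\ZF$ via a domain $U$, an equivalence relation $\simeq$, and interpreted relations and functions, and let $j\colon A\cong \langle U,\ldots\rangle/\simeq$. For each $a\in U$ the class $[a]_\simeq$ is an $M$-definable (with the same parameters) class; inside $M$ it need not be a set, but by the axiom of foundation it has elements of minimal $\in$-rank, and the collection of those minimal-rank elements \emph{is} a set, call it $a^\ast=\{\,b\in[a]_\simeq : \rank(b)\text{ is minimal}\,\}$. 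This is exactly the Scott's trick operation, and the map $a\mapsto a^\ast$ is $M$-definable, constant on $\simeq$-classes, and injective on classes: $a\simeq a'$ iff $a^\ast = a'^\ast$.

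With this in hand I would let $U^\ast=\{a^\ast : a\in U\}$ be the new interpreted domain — this is $M$-definable since it is the image of $U$ under a definable class function — and take the equivalence relation on $U^\ast$ to be genuine equality. Each interpreted relation $R^{U}$ transfers to a relation $R^{U^\ast}$ on $U^\ast$ by declaring $R^{U^\ast}(a_1^\ast,\ldots,a_n^\ast)$ to hold iff $R^{U}$ holds of some (equivalently, every) representatives, and similarly for the interpreted functions; these are $M$-definable because "$x = a^\ast$ for some $a\in U$" is definable and we can quantify over representatives. Since $\simeq$ was a congruence for the original relations, the transferred relations are well-defined, and one checks that $\langle U^\ast, R^{U^\ast},\ldots\rangle$ is literally isomorphic to the quotient $\langle U,\ldots\rangle/\simeq$ via $[a]_\simeq\mapsto a^\ast$, hence isomorphic to $A$ by composing with $j$. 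This gives the desired interpretation of $A$ in $M$ with identity as the equivalence relation.

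The one genuine point requiring care — and the only place $\ZF$ rather than $\ZFCm$ is used — is the verification that $a^\ast$ is always a \emph{set}: this needs that $\{b\in [a]_\simeq : \rank(b)=\alpha\}$ is a set for the least $\alpha$ with inhabited intersection, which follows because it is a subclass of the set $V_{\alpha+1}$, using the power set axiom to know $V_{\alpha+1}$ is a set. (The excerpt already flags that this step can fail in $\ZFCm$, where the minimal-rank class may be proper, so I would remark on that rather than try to repair it.) Everything else is routine bookkeeping: checking definability of $U^\ast$ and of the transferred relations and functions from the definitions of the originals, and checking the isomorphism — these I would state but not belabor. The main obstacle, then, is simply making the Scott's trick argument and confirming the sethood claim carefully; once that is granted, the construction and its verification are straightforward.
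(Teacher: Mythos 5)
Your proposal is correct and follows exactly the paper's argument: Scott's trick, replacing each $\simeq$-class by the set of its minimal-rank members, with the sethood of that collection guaranteed by power set (the very step that fails in $\ZFCm$), and then transferring the relations to the Scott classes under genuine equality. No substantive differences from the paper's proof.
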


This theorem appears as exercise 2 of section 4.4 of \cite{Hodges1993:ModelTheory}.

\begin{proof}
The argument amounts to what is known as ``Scott's trick.'' If $\simeq$ is a definable equivalence relation in a model $M\satisfies\ZF$, then let us replace every equivalence class $[a]_\simeq$ with the Scott class $[a]^*_\simeq$, which consists of the elements of the equivalence class having minimal possible $\in$-rank in that class. This is a definable set in $M$, and from the Scott class one can identity the original $\simeq$ equivalence class. So we can replace the original interpretation modulo $\simeq$ with the induced interpretation defined on the Scott classes. And since two Scott classes correspond to the same interpreted object if and only if they are equal, we have thereby eliminated the need for the equivalence relation.
\end{proof}

This proof does not work in $\ZFCm$, that is, in set theory without the power set axiom, because the minimal rank instances from a class may still form a proper class, and so Scott's trick doesn't succeed in reducing the class to a set. And so we had asked the question, posting it on MathOverflow \cite{Hamkins2020.MO350542:Can-Homega1-Homega2-be-in-synonymy?}:

\begin{question}\label{Question.ER-required?}
Is there a structure that is interpretable in a model of $\ZFCm$, but only by means of a nontrivial equivalence relation?
\end{question}

The question was answered affirmative by Gabe Goldberg \cite{Goldberg2020.MO350585:Can-Homega1-Homega2-be-in-synonymy?}, assuming the consistency of large cardinals. He pointed out that if $\AD^{L(\R)}$ holds and $\delta^1_2=\omega_2$, there is a projectively definable prewellordering on the reals of order type $\omega_2$, and so the order structure $\<\omega_2,<>$ is interpretable in $\<H_{\omega_1},\in>$, but there is no definable order in this structure of type $\omega_2$, because there is, he proves, no injection of $\omega_2$ into $H_{\omega_1}$ in $L(\R)$.  Therefore, under these assumptions, we have a structure interpretable in a model of $\ZFCm$, using a nontrivial equivalence relation, but not without. Can one produce an example in \ZFC?

Next, we establish the phenomenon of automatic bi-interpretability for well-founded models of set theory.\goodbreak

\begin{theorem}\label{Theorem.Well-founded-model-interpretation-is-definable}
If a well-founded model $M$ of $\ZF^-$ is interpreted in itself via $i:M\to\Mbar/\simeq$, then the interpretation isomorphism map $i$ is unique and furthermore, it is definable in $M$.
\end{theorem}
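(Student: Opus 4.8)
The plan is to exploit well-foundedness together with the recursion theorem to pin down the isomorphism $i$ by $\in$-recursion, and then argue that uniqueness forces definability.

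\medskip

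\noindent\textbf{Uniqueness.} First I would argue that there is at most one isomorphism $i\colon M\to\Mbar/\simeq$. Suppose $i$ and $i'$ are two such isomorphisms. Then $i'\circ i^{-1}$ would be an $\in$-isomorphism of the quotient structure $\Mbar/\simeq$ with itself, hence (transporting back) an automorphism of $M$. But a well-founded model of $\ZF^-$ — indeed any extensional well-founded structure — is rigid: by $\in$-induction, if $\pi$ is an automorphism and $\pi(y)=y$ for all $y\in_M x$, then $\pi(x)=x$, since $x$ is determined by its members in an extensional structure. (Here one uses that $M\satisfies$ extensionality and that the $\in^M$-relation is well-founded, so $\in$-induction is available externally.) Hence $i'\circ i^{-1}$ restricted appropriately is the identity, giving $i=i'$. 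A little care is needed because $\Mbar/\simeq$ is a quotient: one should run the rigidity argument on the quotient structure, which is itself extensional (the quotient was taken precisely to make $\simeq$ a congruence realizing an isomorphic copy of $M$, so it satisfies extensionality), and well-founded since it is isomorphic to the well-founded $M$.

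\medskip

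\noindent\textbf{Definability.} Now I would show the unique $i$ is $M$-definable. The key is that $i$ satisfies an $\in$-recursive specification: $i(x)$ is the $\simeq$-class whose members (in the sense of the interpreted membership relation $\barin$ on $\Mbar$, modulo $\simeq$) are exactly $\{\,i(y)\mid y\in_M x\,\}$. Formally, $i(x) = [\,z\,]_\simeq$ where $z\in\Mbar$ is chosen so that for all $w$, $w\barin z$ iff $w\simeq i(y)$ for some $y\in_M x$; such a $z$ exists and is $\simeq$-unique because $\Mbar/\simeq$ is isomorphic to the full universe $M$ and hence satisfies enough set-existence (pairing, union, separation — all consequences of $\ZF^-$) to realize the set $\{i(y)\mid y\in x\}$ as an element. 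Because $M$ is well-founded, the external recursion theorem applies: the class function $x\mapsto i(x)$ is the unique solution, but more importantly I would argue it is internally definable. The point is that $i(x)$ is determined by a \emph{set}-sized amount of information — namely $i\restrict\TC(\{x\})$ — and this restriction can itself be characterized as the unique function $f$ with domain $\TC(\{x\})$ satisfying the recursive clause above, a condition expressible by a first-order formula in $M$ (using that $\TC$ is available in $\ZF^-$ and that $\Mbar$, $\simeq$, $\barin$ are all $M$-definable with parameters). Then $i(x)=y$ iff there is such an $f$ with $f(x)=y$. This gives an $M$-definable (with the interpretation parameters) formula defining $i$.

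\medskip

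\noindent The main obstacle I anticipate is managing the quotient structure cleanly: one must be sure that $\Mbar/\simeq$ really is extensional and well-founded so that rigidity and the $\in$-recursion go through, and one must phrase the recursive clause at the level of $\simeq$-classes without circularity. A secondary subtlety is confirming that $\ZF^-$ (not full $\ZF$) suffices for the internal recursion — here well-foundedness of $M$ is doing the real work, since it lets us carry out the recursion externally and then recognize that each value depends only on a transitive-closure-sized piece, which $\ZF^-$ can handle set-by-set even without power set.
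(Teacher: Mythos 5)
Your uniqueness argument is fine and matches the paper's in substance (the paper phrases it as: $i^{-1}$ must be the Mostowski collapse of $\langle\Mbar,\barin\rangle/\simeq$, computed externally; your rigidity-of-well-founded-extensional-structures argument is the same fact). Your definability strategy is also the same as the paper's in outline: characterize $i(x)$ by the existence of a set-sized witness over $\TC(\singleton{x})$ satisfying the recursive clause. But there is a genuine gap at exactly the step that constitutes the bulk of the paper's proof: you never establish that the witness $f\restrict\TC(\singleton{x})$ actually exists \emph{as an element of $M$}. Externally, $i\restrict\TC(\singleton{x})$ is a class of pairs $(y,[z]_\simeq)$ whose values are $\simeq$-classes; for a set function $f$ in $M$ to encode it, $M$ must choose a representative of the class $i(y)$ for every $y\in\TC(\singleton{x})$ and collect these choices into a single set. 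In $\ZF^-$ this is not automatic: Scott's trick is unavailable without power set (the minimal-rank part of a $\simeq$-class can be a proper class), and there is no global choice of representatives, so the relation $\barin$ modulo $\simeq$ need not be literally set-like. Asserting that the restriction ``can be characterized as the unique $f$ with domain $\TC(\singleton{x})$ satisfying the recursive clause'' only pins it down \emph{if it exists}; the biconditional ``$i(x)=y$ iff there is such an $f$ with $f(x)\simeq y$'' fails as a definition of $i$ unless the left-to-right direction is proved.

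The missing argument is an $\in$-induction (run externally, using well-foundedness) showing that every $\bar a\in\Mbar$ admits such a witness: inductively each $\bar b\barin\bar a$ has a witnessing set $\overbar B_b$, and one must invoke the \emph{collection} axiom of $\ZF^-$ to gather one witness for each $b\in a$ into a single set $B$, then adjoin a representative of $i(a)$ to form the witness for $a$. This is precisely how the paper closes the gap, and it is why the paper is careful that $\ZF^-$ includes collection and not merely replacement. Your closing remark that ``$\ZF^-$ can handle set-by-set'' the transitive-closure-sized piece is the claim that needs proof, not a justification of it.
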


\begin{proof}
Assume that $M$ is a well-founded model of $\ZF^-$, which we may assume without loss to be transitive, and that $\<M,\in>$ is interpreted in itself via the interpretation map $i:\<M,\in>\cong\<\Mbar,\barin>/\simeq$, where $\Mbar\of M$ is a definable class in $M$ with a definable relation $\barin$, and $\simeq$ is a definable equivalence relation on $\Mbar$, which is a congruence with respect to $\barin$.
$$\begin{tikzpicture}[scale=.8,thick]
% draw M
\drawM{$M$}
\node (Ma) at (2,-1) {};
\begin{scope}[scale=.6,fill=orange,shift={(-.2,-.3)},rotate=30,semithick,every node/.style={scale=.8}]
\draw[fill=red!40!white,fill opacity=.7] (-2,-2) rectangle (2,2) node[right,opacity=1] {$\Mbar$};
\node (Mb) at (2,-1) {};
% \clip (0,0) circle (2);
\draw[black,step=.5,very thin] (-2,-2) grid (2,2);
\end{scope}
\draw[->,>=Stealth,out=0,in=30,looseness=6,line width=.5pt] (Ma.center) to node[pos=.5,right] {$i$} (Mb.center);
\end{tikzpicture}$$
Since models of set theory admit pair functions, however, we may assume directly that $\Mbar\of M$ rather than $\Mbar\of M^k$ with $k$-tuples.

Let $\pi=i^{-1}:\Mbar\to M$ be the inverse map, viewed as a map on $\Mbar$. Since $i$ is an isomorphism, it follows that $b\in a\Iff i(b)\mathrel{\barin}i(a)$, and consequently, $\pi(\bar a)=\set{\pi(\bar b)\mid \bar b\mathrel{\barin}\bar a}$. Thus, $\pi$ is precisely the Mostowski collapse of the relation $\barin$ on $\Mbar$, which is well-founded precisely because it has a quotient that is isomorphic to $\<M,\in>$. Thus, the map $i$ is unique, for it is precisely the inverse of the Mostowski collapse of $\<\Mbar,\barin>/\simeq$, as computed externally to $M$ in the context where $i$ exists.

What remains is to show that $M$ itself can undertake this Mostowski collapse. While the $\simeq$-quotient of $\barin$ is a well-founded extensional relation, the subtle issue is that $\barin$ may not literally be set-like, since the $\simeq$-equivalence classes themselves may be proper classes, and in $\ZF^-$ we are not necessarily able to pick representatives globally from the equivalence classes. So it may not be immediately clear that $M$ can undertake the Mostowski collapse. But, we show, it can.

In $M$, let $I$ be the class of pairs $(a,\bar a)$ where $a\in M$ and $\bar a\in\Mbar$ and there is a set $\overbar A\in M$  with (i) $\overbar A\of\Mbar$; (ii) $\overbar A$ is transitive with respect to $\barin$ modulo $\simeq$, in the sense that if $x\mathrel{\barin} y\in\overbar A$, then there is some $x'\in\overbar A$ such that $x\simeq x'$; and (iii) $\<\overbar A,\barin>/\simeq$ is isomorphic to $\<\TC(\singleton{a}),\in>$. Note that if $(a,\bar a)$ are like this, then the isomorphism will be exactly the inverse of the Mostowski collapse of $\<\overbar A,\barin>/\simeq$.

We claim by $\in^{\Mbar}$-induction (externally to $M$) that every $\bar a\in\Mbar$ is associated in this way with some $a\in M$, and furthermore, the association $I$ is simply the map $i:a\mapsto \bar a$. To see this, suppose $\bar a\in\Mbar$ and every $\in^{\Mbar}$-element of $\bar a$ is associated via $I$. Since $i$ is an isomorphism, there is some $a$ such that $i(a)=\bar a$. Inductively, every $\bar b\in^{\Mbar}\bar a$ is associated with some $b$ for which $i(b)=\bar b$, and all such $b$ are necessarily elements of $a$. So $M$ can see that every $\bar b\in^{\Mbar}\bar a$ is associated by the class $I$ with a unique element $b\in a$. For each $b$, there are various sets $\overbar B_b$ with largest element $\bar b$ realizing that $(b,\bar b)\in I$. By the collection axiom, we can find a single set $B$ containing such a set for every $b\in a$, serving as a single witnessing set. Let $\overbar A=\singleton{\bar a}\union B$, where $\bar a$ is a representative of the $\simeq$-class of $i(a)$. This is a set in $M$, it is contained in $\Mbar$ and it is transitive with respect to $\barin$ modulo $\simeq$. Since $\overbar B_b\of\overbar A$, the Mostowski collapse of $\<\overbar A,\barin>/\simeq$ agrees with the collapse of $\<\overbar B_b,\barin>/\simeq$, and so sends $\bar b$ to $b$, for every $b\in a$. So this set $\overbar A$ witnesses that $(a,\bar a)\in I$, and agrees with $i$ on $a$. So the map $i$ is definable in $M$, as desired.
\end{proof}

Notice that theorem \ref{Theorem.Well-founded-model-interpretation-is-definable} is not true if one drops the well-foundedness assumption. For example, on general model-theoretic grounds there must be models $M\satisfies\ZFm$ with nontrivial automorphisms $i:M\to M$, and any such map is an interpretation of $M$ in itself (as itself), but such an automorphism can never be definable. Similarly, if \ZFC\ is consistent, then there is a countable computably saturated model $\<M,\in^M>\satisfies\ZFC$, and such a model contains an isomorphic copy of itself as an element $m$, an observation due to \cite[corollary~3.3]{Schlipf1978:TowardModelTheoryThroughRecursiveSaturation}; see also \cite[lemma~7]{GitmanHamkins2010:NaturalModelOfMultiverseAxioms}. This provides an interpretation of $M$ in itself, using $m$ as a parameter, but the isomorphism cannot be definable nor even amenable to $M$, since $M$ cannot allow a definable injection from the universe to a set.

\begin{corollary}\label{Corollary.Mutual-implies-bi-for-models-of-ZF^-}
Every instance of mutual interpretation amongst well-founded models of $\ZF^-$ is a bi-interpretation. Indeed, if $M$ is a well-founded model of $\ZF^-$ and mutually interpreted with any structure $N$ of any theory, as in the figure below, then the isomorphism $i:M\to\Mbar$ is definable in $M$.
$$
\begin{tikzpicture}[scale=.8,thick]
% draw M
\drawM{$M$}
\node (Ma) at (2,-1) {};
\node (Mc) at (2,-.5) {};
\node (Mp) at (1.6,-.75) {};
\begin{scope}[scale=.7,shift={(-.2,-.3)},rotate=30,semithick,every node/.style={scale=.8}]
\drawN{$N$}
\node (Nb) at (2,0) {};
\begin{scope}[scale=.5,shift={(-.4,.2)},every node/.style={scale=.5},semithick=false]
\drawM{$\Mbar$}
\node (Md) at (2,-1) {};
\node (Mq) at (.5,-.5) {};
\end{scope}
% \clip (0,0) circle (2);
% \draw[black,step=.5,very thin] (-2,-2) grid (2,2);
\end{scope}
\draw[-{[scale=.5]>},>=Stealth,bend right,very thin,shorten >=2pt,shorten <=-1pt] (Mp) node[scale=.7] {$a$} to node[midway, above,scale=.5] {$i$} (Mq)  node[scale=.6] {$\bar a$};
\end{tikzpicture}$$
\end{corollary}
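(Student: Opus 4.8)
The plan is to reduce the corollary to Theorem~\ref{Theorem.Well-founded-model-interpretation-is-definable} by composing the two interpretation maps into a single interpretation of $M$ in itself. Suppose $M$ is a well-founded model of $\ZF^-$ mutually interpreted with a structure $N$, as in the figure: so $N$ is interpreted in $M$, with copy $N^*\of M$, and $M$ is interpreted in $N$, and transporting the latter interpretation along the former places a definable copy $\Mbar$ of $M$ inside $N^*\of M$. The key point is that interpretations compose: translating the $\mathcal{L}_N$-formulas defining the interpretation of $M$ in $N$ (the domain formula, the membership relation, and the equivalence relation) through the interpretation of $N$ in $M$ yields, inside $M$, a definable class $\Mbar\of M$, a definable relation $\barin$ on $\Mbar$, and a definable equivalence relation $\simeq$ on $\Mbar$ that is a congruence for $\barin$. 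Chasing the figure shows that the composite isomorphism $M\cong\langle\Mbar,\barin\rangle/{\simeq}$ is exactly the map $i$. Thus the pair of interpretations composes to an interpretation of $M$ in itself, with associated isomorphism $i$.

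Now Theorem~\ref{Theorem.Well-founded-model-interpretation-is-definable} applies directly to this self-interpretation: since $M$ is a well-founded model of $\ZF^-$, the isomorphism $i$ is unique and definable in $M$. This establishes the ``Indeed'' clause. For the first sentence, note that when $N$ is also a well-founded model of $\ZF^-$, the same argument run from $N$'s side shows that the composite isomorphism $N\cong\Nbar$ is definable in $N$ as well; having both composite isomorphisms definable in their home models is precisely what it means for the mutual interpretation to be a bi-interpretation.

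The one delicate point is the bookkeeping in the composition, and it arises exactly because we work in $\ZF^-$ without the power set axiom: the equivalence classes of the inner equivalence relation need not be sets, so one cannot literally form ``the class of such classes'' as a subclass of $M$, and one must instead carry the combined relation $\simeq$ as a definable binary relation on representatives drawn from $N^*$ rather than as an honest set quotient. But this is precisely the generality in which Theorem~\ref{Theorem.Well-founded-model-interpretation-is-definable} was proved---its argument already handles possibly-proper-class equivalence classes via the collection axiom---so nothing new is needed; one simply verifies that the composite data meets the hypotheses of that theorem. I expect this representative-level bookkeeping to be the main (if minor) obstacle, with everything else following immediately from the cited theorem and the definitions of mutual interpretation and bi-interpretation.
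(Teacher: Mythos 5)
Your proposal is correct and matches the paper's own argument, which likewise observes that composing the two interpretations yields an interpretation of $M$ in itself and then invokes Theorem~\ref{Theorem.Well-founded-model-interpretation-is-definable}; your extra care about carrying the composite equivalence relation at the level of representatives is exactly the generality that theorem was already proved in.
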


\begin{proof}
This follows immediately from theorem \ref{Theorem.Well-founded-model-interpretation-is-definable}, since any instance of mutual interpretation leads to an instance of a model being interpreted inside itself, and so by theorem \ref{Theorem.Well-founded-model-interpretation-is-definable} the interpretation maps will be definable.
\end{proof}

We are unsure whether we can achieve the half-way situation, where a transitive model $M\satisfies\ZF^-$ is mutually interpreted with some nonstandard model $N\satisfies\ZF^-$, without being bi-interpreted.

\section{Bi-interpretation does not occur in models of \ZF}\label{Section.Bi-interpretation-does-not-occur}

Finally, we come to the heart of the paper, concerning the extent of bi-interpretation in set theory. Let us begin with Enayat's theorem, which will show that distinct models of \ZF\ are never bi-interpretable. As we mentioned earlier, a theory $T$ is \emph{semantically tight}, if any two bi-interpretable models of $T$ are isomorphic. Strengthening this, a theory $T$ is \emph{solid}, if whenever $M$ and $N$ are mutually interpreted models of $T$ and there is an $M$-definable isomorphism of $M$ with its copy $\Mbar$ inside the copy of $N$ defined in $M$, then $M$ and $N$ are isomorphic. Thus, solidity strengthens semantic tightness, because it requires the models to be isomorphic even when one has essentially only half of a bi-interpretation, in that only one side of the interpretation needs to be definable in the model, rather than both.

\begin{theorem}[Enayat \cite{enayat2017variations}]\label{ali-theorem}
\ZF\ is solid. 
\end{theorem}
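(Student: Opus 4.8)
Fix models $M,N\models\ZF$ that are mutually interpreted, say with $M$ interpreting a copy $\<\Nbar,\barin>$ of $N$, where $\Nbar\subseteq M$ is a definable class carrying a definable relation $\barin$, and with $N$ in turn interpreting a copy $\<\Mbar,E>$ of $M$ sitting inside $\Nbar$; by hypothesis the composite map $e\colon\<M,\in>\cong\<\Mbar,E>$ (the composition of the two interpretation maps) is definable in $M$. The plan is to prove that $\<\Nbar,\barin>$ is in fact isomorphic to $\<M,\in>$ itself, which suffices, since $\<\Nbar,\barin>\cong N$ and so this yields $M\cong N$. As a preliminary normalization I would invoke Theorem~\ref{Theorem.Interpretation-in-ZF-does-not-need-equivalence-relation} to arrange, via Scott's trick, that none of the nested interpretations carries a nontrivial equivalence relation, so that $\Nbar$, $\Mbar$, their relations, and the bijection $e$ are all genuinely defined on subclasses of $M$.

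The main idea is then to build inside $M$ the canonical ``$\barin$-uncollapsing'' map $h$ by $\in$-recursion: $h(x)$ is the element $u\in\Nbar$ whose $\barin$-extension $\set{u'\in\Nbar\mid u'\mathrel{\barin}u}$ is exactly $\set{h(y)\mid y\in x}$. A routine Mostowski-style induction inside $M$ shows that such an $h$ is a total, surjective, definable isomorphism of $\<M,\in>$ onto $\<\Nbar,\barin>$ --- provided that $M$ can verify that $\<\Nbar,\barin>$ is extensional, set-like, and well-founded from its own point of view. Extensionality is automatic, since $\<\Nbar,\barin>$ models \ZF\ and extensionality is $\Pi_1$ in $\barin$; so the crux of the argument is the lemma that, inside $M$, the relation $\barin$ is set-like and well-founded.

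For that lemma the plan is to combine the definability of $e$ with the second interpretation and the reflection theorem. Inside $M$ we have simultaneously the copy $\Nbar$ of $N$, the copy $\<\Mbar,E>$ of $M$ obtained by interpreting inside $\Nbar$, and the definable isomorphism $e\colon\<V,\in>\cong\<\Mbar,E>$; thus $M$ proves outright that its entire universe is a faithful copy of the structure interpreted two levels down. Transporting the first interpretation along $e$, one sees moreover that $M$ proves $\<\Nbar,\barin>$ to be isomorphic --- by a restriction of $e$ --- to the copy of $N$ interpreted inside $\Mbar$, and iterating gives inside $M$ a descending tower of interpreted copies of $N$ with definable isomorphisms between consecutive levels. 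I would then apply reflection inside $M$: for a sufficiently large finite fragment $\Sigma$ of \ZF\ together with the defining formulas of the two interpretations, of $e$, and of $e^{-1}$, the ordinals $\theta$ with $V_\theta\prec_\Sigma M$ and $V_\theta$ closed under $e$ and $e^{-1}$ form a proper class club, and for each such $\theta$ the restrictions $\Nbar\cap V_\theta$, $\Mbar\cap V_\theta$, and $e\restriction V_\theta$ reproduce the entire configuration over the set-sized elementary submodel $V_\theta$. Working with these set-sized approximations --- where the interpreted relations are genuinely set-like and a descending sequence, if present, is witnessed by an actual set --- one aims to derive a contradiction by chasing such a sequence down through the interpretation producing $\Mbar$ and back up through $e\restriction V_\theta$, and the same reflection argument is intended to yield set-likeness.

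The step I expect to be the main obstacle is precisely this lemma --- concretely, controlling how foundedness behaves under the interpretation that produces $\<\Mbar,E>$ from $\<\Nbar,\barin>$. In the well-founded case one can hope to shortcut much of this using Theorem~\ref{Theorem.Well-founded-model-interpretation-is-definable} and Corollary~\ref{Corollary.Mutual-implies-bi-for-models-of-ZF^-}, since a well-founded interpreted copy collapses externally; but here $M$ itself may be ill-founded, so no external Mostowski collapse is available and the only leverage is the definability of the single composite isomorphism $e$ together with the uniformity of the interpretations and the reflection scheme. This is exactly where the full strength of \ZF\ is used, and exactly where one must remember that only one of the two composite isomorphisms is assumed definable, which is what distinguishes solidity from bi-interpretability. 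Making a descending sequence survive the round trip down through one interpretation and back up through the other, rather than being destroyed in transit, is the delicate heart of the proof.
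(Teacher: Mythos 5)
Your proposal assembles the right ingredients (the definable composite isomorphism $e$, a Mostowski-style transfer, Tarski/cardinality obstructions lurking in the background), but the reduction you make is not the one the definability hypothesis supports, and the lemma you defer to is exactly the unproved heart of the matter. The map $e\colon M\cong\Mbar$ lets you pull an arbitrary nonempty class $A\of\Mbar$ back to the $M$-class $\set{e^{-1}(x)\mid x\in A}$, apply foundation in $M$, and push the minimal element forward; this shows that $N$ (equivalently $\Nbar$) sees the relation $E$ on the \emph{doubly}-interpreted copy $\Mbar$ as well-founded, and that is the direction the paper's proof uses. What you need instead is that $M$ sees $\barin$ on $\Nbar$ as well-founded and set-like, and at the outset there is no definable map from $\Nbar$ back into $\<M,\in>$ with which to pull a descending sequence back: $\Mbar$ sits \emph{inside} $\Nbar$, a $\barin$-descending sequence does not yield an $E$-descending one, and your reflection-to-$V_\theta$ sketch does not produce the missing map either (you rightly flag this as the main obstacle). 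In fact $M$ does end up seeing $\barin$ as well-founded, but only as a corollary of the isomorphism $M\cong N$ you are trying to construct, so it cannot serve as a stepping stone in this way.

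Even granting that lemma, there is a second gap: collapsing $\<\Nbar,\barin>$ would only give an isomorphism with \emph{some} transitive class of $M$, which could a priori be a proper inner model such as $L^M$; the totality/surjectivity of your uncollapsing map $h$ is precisely the non-routine part, since it requires every subset of each level of $M$ to be represented inside $\Nbar$. The paper's proof handles both issues simultaneously by an induction on rank rather than a single collapse: using the definability of $e$ it shows $\<V_\alpha,\in>^M\cong\<V_{\bar\alpha},\in>^{\Mbar}\cong\<V_{\alpha^*},\in>^N$ level by level (an $N$-subset of $V_{\alpha^*}^N$ is an $M$-class and is pushed into $\Mbar$ by $e$, and conversely), and then rules out either ordinal spine terminating early: if $N$'s ordinals run out first, $M$ becomes definably bijective with a set; if $M$'s run out first, $N$ has a truth predicate for the set-sized $\Mbar$, which $e$ pulls back to an $M$-definable truth predicate for $M$, contradicting Tarski. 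Those two endgame contradictions, not a well-foundedness lemma for $\barin$, are where the full strength of \ZF\ (power set and the $V_\alpha$ hierarchy) actually enters.
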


\begin{proof}
Assume that $M$ and $N$ are mutually interpreted models of \ZF, where $N$ is definable in $M$ and $\Mbar$ is a definable copy of $M$ inside $N$, with an isomorphism $i:M\to\Mbar$ that is definable in $M$, as illustrated here:

$$\begin{tikzpicture}[scale=1,thick]
% Nested iteration
\begin{scope}[thick,shift={(7.5,0)}]
% draw M
\drawM{$M$}
\begin{scope}[scale=.7,shift={(-.2,-.3)},every node/.style={scale=.8},semithick]
\drawN{$N$}
\begin{scope}[scale=.5,shift={(-.4,.2)},rotate=20,every node/.style={scale=.6},line width=.2mm]
\drawMbi{$\Mbar$}{$i$}{1}
\begin{scope}[scale=.7,shift={(-.2,-.3)},every node/.style={scale=.5},thin]
% \drawNbi{$\Nbar$}{$j$}{.7}
\end{scope}
\end{scope}
\end{scope}
\end{scope}
\end{tikzpicture}$$

Suppose that $N$ thinks $A$ is a nonempty subclass of $\Mbar$. Consider the pre-image $A'=\set{i^{-1}(x)\mid x\in^N A}$, which is a nonempty class in $M$. So it must have an $\in^M$-minimal element $a \in^M A'$. It follows that $i(a)$ is an $\in^\Mbar$-minimal element of $A$, and so $N$ sees $\in^\Mbar$ as well founded. Because of this fact, the ordinals of the models will be comparable, and they will have some ordinals isomorphically in common. We claim that for these common ordinals, the rank-initial segments of $M$, $N$ and $\Mbar$ are isomorphic. Specifically, if $\alpha$ is an ordinal in $M$, which is isomorphic to the ordinal $\overbar\alpha=i(\alpha)$ in $\Mbar$, and this happens to be isomorphic in $N$ to an ordinal $\alpha^*$ in $N$, then we claim that $N$ can see that $\<V_{\alpha^*},\in>^N$ is isomorphic to $\<V_{\overbar\alpha},\in>^\Mbar$, which we know is isomorphic to $\<V_\alpha,\in>^M$. This is certainly true when $\alpha=0$, and if it is true at $\alpha$, then it will also be true at $\alpha+1$, because every subset of $V_\alpha^N$ exists also as a class in $M$ and therefore can be pushed via $i$ to the corresponding subset of $V_{\overbar\alpha}^\Mbar$; and conversely, every subset of $V_{\overbar\alpha}^\Mbar$ in $\Mbar$ exists also as a class in $N$ and can be pulled back by the isomorphism with $V_{\alpha^*}^N$. So in $N$ we may extend the isomorphism canonically from $V_{\alpha^*}^N\cong V_{\overbar\alpha}^\Mbar$ to $V_{\alpha^*+1}^N\cong V_{\overbar\alpha+1}^\Mbar$. Furthermore, because $V_{\alpha^*}^N$ is transitive in $N$ and transitive sets are rigid, the isomorphisms must be unique, and so at limit stages the isomorphism will simply be the coherent union of the isomorphisms at the earlier stages.

If the ordinals of $N$ run out before the ordinals of $M$, then $\Ord^N$ is isomorphic to an ordinal $\lambda$ in $M$ and hence to $i(\lambda)=\overbar\lambda$ in $\Mbar$. In this case, the isomorphism from the previous paragraph will mean that $N=V_{\Ord}^N\cong V_{\overbar\lambda}^\Mbar$, which is isomorphic to $V_\lambda^M$. So $M$ will see that $N$ is bijective with a set. But $M$ has a bijection of itself with $\Mbar$, which is contained in $N$. So $M$ will think that the universe is bijective with a set, which is impossible in \ZF. 

If in contrast the ordinals of $M$ and hence $\Mbar$ run out before the ordinals of $N$, then $N$ would recognize that $\Mbar$ is isomorphic to a transitive set $V^N_{\alpha^*}$. Since it is a set, $N$ has a truth predicate for it, and so $N$ can define a truth predicate for $\Mbar$. Since $N$ and $i$ are definable in $M$, we can pull this truth predicate back via $i$ to produce an $M$-definable truth predicate on $M$, contrary to Tarski's theorem on the nondefinability of truth. 

So the ordinals of the three models must be exactly isomorphic, and so the isomorphisms of the rank-initial segments of the models in fact produce an isomorphism of $N=V_{\Ord}^N$ with $\Mbar=V_{\Ord}^\Mbar$ and hence with $M$, as desired.
\end{proof}

Albert Visser had initially proved a corresponding result in 2004 for Peano arithmetic \PA\ in \cite[see pp.52--55]{visser2004categories},\cite{Visser2006:Categories-of-theories-and-interpretations}. Ali Enayat had followed with a nice model-theoretic argument showing specifically that \ZF\ and \ZFC\ are not bi-interpretable, using the fact that \ZFC\ models can have no involutions in their automorphism groups, but \ZF\ models can, before finally proving the general version of his theorem, for \ZF, for second-order arithmetic $Z_2$ and for second-order set theory \KM\ in \cite{enayat2017variations}. The ZF version was apparently also observed independently by Harvey Friedman and Albert Visser, by Fedor Pakhomov, as well as by ourselves \cite{Hamkins.blog2018:Different-set-theories-are-never-biinterpretable}, before we had realized that this was a rediscovery.

Enayat defines that a theory $T$ is \emph{tight}, if whenever two extensions of $T$ are bi-interpretable, then they are the same theory.

\begin{corollary}\label{Corollary.ZF-is-tight}
$\ZF$ is tight. That is, no two distinct set theories extending \ZF\ are bi-interpretable.
\end{corollary}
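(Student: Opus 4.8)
The plan is to obtain tightness as an immediate corollary of solidity (Theorem~\ref{ali-theorem}), passing through the intermediate notion of semantic tightness. First I would record that \ZF\ is semantically tight: if $M$ and $N$ are bi-interpretable models of \ZF, then in particular they are mutually interpreted, and by the very definition of bi-interpretation of models the composite isomorphism $ji\colon M\to\Mbar$ of $M$ with its copy inside the interpreted copy of $N$ in $M$ is definable in $M$; but this is exactly the hypothesis of solidity, so Theorem~\ref{ali-theorem} yields $M\cong N$. (Only the $M$-side half of the bi-interpretation is used here, which is precisely why solidity is the natural strengthening.)

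Now suppose toward a contradiction that $T_1$ and $T_2$ are distinct extensions of \ZF\ that are bi-interpretable, say via an interpretation $I$ of $T_1$ in $T_2$ and an interpretation $J$ of $T_2$ in $T_1$ satisfying the invertibility clauses. After possibly interchanging the two theories, there is a sentence $\sigma$ in the language of set theory that is provable in $T_1$ but not in $T_2$; then $T_2+\neg\sigma$ is consistent, so fix a model $N\satisfies T_2+\neg\sigma$. Interpreting via $I$ inside $N$ produces a model $M=I^N\satisfies T_1$, defined in $N$, and interpreting via $J$ inside $M$ produces a model $J^M\satisfies T_2$, defined in $M$. The invertibility clause for $T_2$, applied in $N$, gives $N\cong J^{I^N}=J^M$, so $J^M$ is a copy of $N$ interpreted in $M$; the invertibility clause for $T_1$, applied in $M$, gives an $M$-definable isomorphism $M\cong I^{J^M}$ of $M$ with the copy of $M$ sitting inside that copy of $N$. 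Thus $M$ and $N$ are mutually interpreted models of \ZF\ meeting the hypothesis of solidity, so $M\cong N$ by Theorem~\ref{ali-theorem}; alternatively, since in fact $M$ and $N$ are bi-interpretable models of \ZF, the semantic tightness established above applies directly. But $M\satisfies\sigma$ because $M\satisfies T_1$, whence $N\satisfies\sigma$, contradicting $N\satisfies\neg\sigma$. Therefore $T_1=T_2$, and \ZF\ is tight.

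The step that demands genuine care is the transition from the syntactic bi-interpretation of the theories $T_1$ and $T_2$ to the semantic data needed to invoke solidity: one must check, in the arbitrary model $N\satisfies T_2$, that $I^N$ is interpreted in $N$, that a copy of $N$ is interpreted in $I^N$, and---the crux---that the ``definable isomorphism'' clause in the theory-level definition of bi-interpretation (\S\ref{Section.Interpretability-in-theories}) really does deliver an isomorphism definable \emph{in $M=I^N$} between $M$ and $I^{J^M}$. This is a routine unwinding of the definitions, but it is where the argument has content; everything afterward is an immediate appeal to Enayat's theorem. One should also dispose of the trivial case in which $T_1$ or $T_2$ is inconsistent, in which case both are---an inconsistent theory can only be bi-interpretable with another inconsistent one---and the two theories coincide.
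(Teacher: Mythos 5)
Your proposal is correct and follows essentially the same route as the paper: tightness is deduced from solidity (Theorem~\ref{ali-theorem}) by taking a model of one theory, observing that the theory-level bi-interpretation makes it bi-interpretable with a model of the other, and invoking solidity to conclude the models are isomorphic and hence satisfy the same sentences. The paper phrases this directly as ``every solid theory is tight'' rather than via a distinguishing sentence $\sigma$, but the content is identical.
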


This corollary follows from theorem \ref{ali-theorem}, because in fact every solid theory is tight. If $T$ is solid, and $T_1$, $T_2$ are distinct bi-interpretable extensions of $T$, then consider any model $M\satisfies T_1$. By the bi-interpretability of the theories, $M$ is bi-interpretable with a model $N\satisfies T_2$. Since these are both models of $T$, it follows by the solidity of $T$ that $M$ and $N$ are isomorphic, and so $M$ is a model of $T_2$. Since the argument also works conversely, the two theories are the same.

In particular, \ZF\ is not bi-interpretable with \ZFC, nor with \ZFC+\CH, nor $\ZFC+\neg\CH$ and so on. Theorem \ref{Theorem.Set-theories-are-mutually-interpretable} therefore cannot be strengthened from mutual interpretation to bi-interpretation, and there is no nontrivial bi-interpretation phenomenon in set theory amongst the models or theories strengthening \ZF.

Furthermore, we claim that there is no mutual interpretation phenomenon amongst the well-founded models of \ZF.

\begin{theorem}\label{Theorem.Well-founded-models-are-never-mutually-interpretable}
Nonisomorphic well-founded models of \ZF\ are never mutually interpretable.
\end{theorem}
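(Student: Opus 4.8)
The plan is to derive this immediately from the two facts already established: the automatic definability of interpretation isomorphisms for well-founded models (Corollary~\ref{Corollary.Mutual-implies-bi-for-models-of-ZF^-}, itself a consequence of Theorem~\ref{Theorem.Well-founded-model-interpretation-is-definable}), together with Enayat's solidity theorem for \ZF\ (Theorem~\ref{ali-theorem}). The key observation is that well-foundedness automatically supplies the one piece of definability data that solidity requires, thereby upgrading mere mutual interpretation into (half of) a bi-interpretation.

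First I would suppose that $M$ and $N$ are well-founded models of \ZF\ that are mutually interpretable. Arrange the interpretations as in the usual picture: a definable isomorphic copy of $N$ is interpreted in $M$, inside it sits a definable copy $\overbar M$ of $M$, and composing the two interpretation maps yields an isomorphism $i\colon M\to\overbar M$. Since \ZF\ extends $\ZF^-$, the model $M$ is in particular a well-founded model of $\ZF^-$, so Corollary~\ref{Corollary.Mutual-implies-bi-for-models-of-ZF^-} applies and tells us that this composite isomorphism $i$ is definable in $M$. Note that only the well-foundedness of $M$ is used at this step, since that corollary allows the other structure to be arbitrary.

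Now I would invoke solidity. We are precisely in the hypothesis situation of Theorem~\ref{ali-theorem}: $M$ and $N$ are mutually interpreted models of \ZF, and the isomorphism $i$ of $M$ with its copy $\overbar M$ inside the interpreted copy of $N$ is definable in $M$. Solidity of \ZF\ therefore gives $M\cong N$. Taking the contrapositive yields the statement: nonisomorphic well-founded models of \ZF\ are never mutually interpretable.

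There is no substantial obstacle here; the argument is essentially a two-step combination of the preceding results, so the only care needed is bookkeeping—verifying that the ``half-bi-interpretation'' data produced by Corollary~\ref{Corollary.Mutual-implies-bi-for-models-of-ZF^-} matches exactly the format demanded by the solidity theorem (two mutually interpreted models of \ZF\ plus a single $M$-definable composite isomorphism). It is also worth remarking that the conclusion is in fact slightly stronger than stated: since only $M$ need be well-founded for the corollary to apply, any model of \ZF\ mutually interpretable with a well-founded model of \ZF\ is isomorphic to it, and hence itself well-founded.
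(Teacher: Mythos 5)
Your proof is correct and follows essentially the same route as the paper: combine Corollary~\ref{Corollary.Mutual-implies-bi-for-models-of-ZF^-} (well-foundedness makes the composite interpretation isomorphism definable) with Theorem~\ref{ali-theorem} (solidity of \ZF) to conclude the models are isomorphic. Your closing remark that only $M$ need be well-founded—so that any model of \ZF\ mutually interpretable with a well-founded one is isomorphic to it—is a correct and slightly sharper reading of the same two ingredients.
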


\begin{proof}
Corollary \ref{Corollary.Mutual-implies-bi-for-models-of-ZF^-} shows that every instance of mutual interpretation amongst the well-founded models of $\ZF$ is a bi-interpretation, but theorem \ref{ali-theorem} shows that bi-interpretation amongst models of \ZF\ occurs only between isomorphic models.
\end{proof}

Let us now explain how to deduce the solidity of \ZF\ by means of the following ``internal categoricity'' theorem of Jouko \Vaananen. He had stated the result in \cite{vaananen2019extension} for \ZFC, but his argument did not use \AC, and so we state and prove it here for \ZF.

\begin{theorem}[V\"a\"an\"anen \cite{vaananen2019extension}]\label{Theorem.Vaananen}
Assume that $\<V,\in,\barin>$ is a model of \ZF\ with respect to both membership relations $\in$ and $\barin$, in the common language. More precisely, it is a model of $\ZF_\in(\barin)$, using $\in$ as the membership relation and $\barin$ as a class predicate and also of $\ZF_{\barin}(\in)$, using $\barin$ as the membership relation and $\in$ as a class predicate. Then $\<V,\in>\iso\<V,\barin>$, and furthermore, there is a unique definable isomorphism in $\<V,\in,\barin>$.
\end{theorem}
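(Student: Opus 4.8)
The plan is to construct the required isomorphism $h\colon\<V,\in>\to\<V,\barin>$ by a transfinite recursion matching the two cumulative hierarchies against one another, and then to read off uniqueness and definability from that construction. The first point to record is that, since $\ZF_\in(\barin)$ includes the foundation scheme for every formula of the common language, the relation $\in$ is well founded with respect to every class definable using both predicates; symmetrically, $\barin$ is well founded on all such classes by the foundation scheme of $\ZF_{\barin}(\in)$. Hence both $\in$-induction and $\barin$-induction are available for properties that mention the other membership relation, which is exactly what keeps the recursion honest. I also use that in $\ZF_\in$ every set has an $\in$-rank, so $V=\bigcup_\alpha V_\alpha$ for the $\in$-cumulative hierarchy $V_\alpha$, and likewise $V=\bigcup_{\bar\alpha}\overline V_{\bar\alpha}$ for the $\barin$-cumulative hierarchy.

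First I would compare the two ordinal orders. Write $\alpha\sim\bar\alpha$ for the common-language assertion that $\alpha$ is an $\in$-ordinal, $\bar\alpha$ is a $\barin$-ordinal, and $\<\alpha,\in>$ is order-isomorphic to $\<\bar\alpha,\barin>$. By extensionality and the rigidity of ordinals, each $\in$-ordinal is matched with at most one $\barin$-ordinal, the matched $\in$-ordinals form an $\in$-initial segment, the matched $\barin$-ordinals form a $\barin$-initial segment, and $\sim$ is order-preserving. If $\lambda$ were the least unmatched $\in$-ordinal, the matching map below $\lambda$ would be a common-language-definable function with $\in$-set domain $\lambda$, so by replacement its range would be a $\barin$-set, necessarily a $\barin$-ordinal $\bar\lambda$ with $\lambda\sim\bar\lambda$, a contradiction. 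So either every $\in$-ordinal is matched or every $\barin$-ordinal is matched.

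Next, for matched $\alpha\sim\bar\alpha$ I would show by recursion that there is a unique isomorphism $h_\alpha\colon\<V_\alpha,\in>\cong\<\overline V_{\bar\alpha},\barin>$, and that these cohere as $\alpha$ grows. The successor step is the crux. Given $h_\alpha$, a subclass $A\of V_\alpha$ that is an $\in$-element of $V_{\alpha+1}$ is an $\in$-set defined by a common-language formula, so its pointwise image $\set{h_\alpha(a)\st a\in A}$ is contained in the $\barin$-set $\overline V_{\bar\alpha}$ and is cut out by a common-language formula, hence is a $\barin$-set by the separation scheme of $\ZF_{\barin}(\in)$ applied to that formula, i.e.\ a $\barin$-element of $\overline V_{\bar\alpha+1}$; and conversely every $\barin$-element of $\overline V_{\bar\alpha+1}$ is pulled back to an $\in$-element of $V_{\alpha+1}$ by the separation scheme of $\ZF_\in(\barin)$. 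So $h_\alpha$ extends canonically to $h_{\alpha+1}$, and at limits one takes the coherent union, coherence being forced by the rigidity of transitive structures. Now suppose, toward a contradiction, that the $\barin$-ordinals ran out, matching only a proper $\in$-initial segment, namely the $\in$-ordinals below some $\in$-ordinal $\lambda$. Then $h=\bigcup_\alpha h_\alpha$ would be a common-language-definable bijection of the $\in$-set $V_\lambda$ onto all of $V$, so the replacement scheme of $\ZF_\in(\barin)$ would make $V$ an $\in$-set, contradicting $\ZF_\in$; the symmetric case, where the $\in$-ordinals run out, is refuted the same way using the replacement scheme of $\ZF_{\barin}(\in)$ to make $V$ a $\barin$-set. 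Hence the two ordinal orders are isomorphic, the maps $h_\alpha$ exhaust both universes, and $h=\bigcup_\alpha h_\alpha$ is an isomorphism $\<V,\in>\cong\<V,\barin>$.

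Finally, uniqueness follows by $\in$-induction: any isomorphism $h'$ must send $x$ to the unique $\barin$-set whose $\barin$-elements are $\set{h'(y)\st y\in x}$, so $h'=h$ by $\barin$-extensionality. And $h$ is definable, since $h(x)=\bar x$ just in case there is a function $f$ (a set, by replacement over $\TC(\singleton x)$) with domain $\TC(\singleton x)$ and $f(x)=\bar x$ such that for every $a$ in the domain, $f(a)$ is the $\barin$-set whose $\barin$-elements are $\set{f(b)\st b\in a}$. The step I expect to be the main obstacle is the successor step above: one must be sure that a collection singled out using one of the two membership relations is recognized as a genuine \emph{set} by the other, and this is precisely where it matters that the comprehension schemes of the common theory range over formulas mentioning \emph{both} predicates---the very feature separating the internal categoricity here from the failure of external categoricity.
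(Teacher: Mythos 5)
Your proposal is correct and follows essentially the same route as the paper's proof: foundation in the common language gives well-foundedness of both membership relations on all definable classes, the two ordinal classes are matched against one another, and the isomorphism is built level-by-level through the two cumulative hierarchies, with the successor step resting on separation/power set in the opposite theory applied to formulas mentioning both predicates. The only cosmetic difference is how the case of one hierarchy terminating early is refuted---you use replacement to conclude that $V$ would be a set, while the paper applies Cantor's theorem to the next power-set level---and both work.
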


\begin{proof}
Let us begin by observing that by the foundation axiom, every nonempty definable $\in$-class will admit an $\in$-minimal element, and similarly every nonempty definable $\barin$-class will have an $\barin$-minimal element. Thus, both $\in$ and $\barin$ will be well-founded relations in this model, regardless of whether we use $\in$ or $\barin$ as the membership relation. In particular, the orders $\langle\Ord^{\<V,\in>},\in\rangle$ and $\langle\Ord^{\<V,\barin>},\barin\rangle$ will be comparable well-ordered classes, and we may assume without loss that the former is isomorphic to an initial segment of the latter. Thus, for every $\in$-ordinal $\alpha$, there is an $\barin$-ordinal $\overbar\alpha$ to which it is isomorphic, and furthermore this ordinal is unique, and the isomorphism is unique. We claim that $\<V_\alpha,\in>\iso\<\overbar V_{\overbar\alpha},\barin>$, by induction on $\alpha$. Since these are transitive sets and hence rigid, the isomorphism is unique. The claim is clearly true for $\alpha=0$. It remains true through limit ordinals, since the isomorphism in that case is simply the union of the previous isomorphisms. At successor stages, we need only observe that any isomorphism of $\<V_\alpha,\in>$ with $\<\overbar V_{\overbar\alpha},\barin>$ extends to an isomorphism of the power set, since any $\in$-subset of $V_\alpha$ transfers to an $\barin$-subset of $\overbar V_{\overbar\alpha}$ by pointwise application of the isomorphism, and vice versa. If $\Ord^{\<V,\barin>}$ is taller than $\Ord^{\<V,\in>}$, then we can see that $\<V,\in>\iso\<\overbar V_{\gamma},\barin>$, where $\gamma\in\Ord^{\<\overbar V,\barin>}$ is the first $\barin$-ordinal not arising from an $\in$-ordinal. But now $\overbar V_{\gamma+1}$, which is the $\barin$-power set of $\overbar V_\gamma$, is a subclass of $V$ and therefore injects into $\overbar V_\gamma$, contrary to Cantor's theorem. So the ordinals are the same height, and thus we have achieved $\<V,\in>\iso\<V,\barin>$. This isomorphism is definable, since every $V_\alpha$ is isomorphic to $\overbar V_{\overbar\alpha}$ by a unique isomorphism.
\end{proof}

Theorem \ref{ali-theorem}, statements (1) and (2), can be seen as a consequence of this theorem---and indeed the proofs we gave are roughly analogous---because if two models of \ZF\ are bi-interpretable, then by the results mentioned in section \ref{Section.Background-on-interpretating-in-set-theory}, we do not need tuples or nonidentity equivalence relations. So the interpretations will interpret each model as a subclass of the other. By the class version of the Schr\"oder-Cantor-Bernstein theorem, these injections can be transformed into bijection of the models, and in this way the bi-interpretation can be transformed into a bi-interpretation synonymy (see also \cite{friedman2014bi}). Thus, we reduce to the case of two membership relations $\<M,\in,\barin>$ on the whole universe, with each relation being definable when using the other as the membership relation. Because of this, the model will satisfy the theory $\ZF(\in,\barin)$, and consequently the models will be isomorphic $\<M,\in>\cong\<M,\barin>$ by \Vaananen's theorem.

\section{\Vaananen\ internal categoricity fails for $\ZFCm$}

Theorems \ref{ali-theorem} and \ref{Theorem.Vaananen} were both proved with arguments that made a fundamental use of the $V_\alpha$ hierarchy, thereby relying on the power set axiom. Was the use of the $V_\alpha$ hierarchy significant? Can one prove the analogues of the theorems for $\ZFCm$, that is, for set theory without the power set axiom? (See \cite{GitmanHamkinsJohnstone2016:WhatIsTheTheoryZFC-Powerset?} for a subtlety about exactly what the theory $\ZFCm$ is; one should include the collection axiom, and not just replacement.) The answer is negative. We shall prove that neither theorem is true for $\ZFCm$, not even in the case of well-founded models. Let us begin by proving that the \Vaananen\ internal categoricity theorem fails for $\ZFCm$.

\begin{theorem}\label{Theorem.ZFCm-double-relation-model}
There is a transitive set $M$ and an alternative well-founded membership relation $\barin$ on $M$, such that $\<M,\in,\barin>$ satisfies $\ZFCm(\in,\barin)$, that is, $\ZFCm$ using either $\in$ or $\barin$ as the membership relation and allowing both as class predicates, such that $\<M,\in>$ is not isomorphic to $\<M,\barin>$, although both are well-founded.
\end{theorem}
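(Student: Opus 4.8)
The plan is to extract the desired double-relation model from a bi-interpretation \emph{synonymy} between two non-isomorphic, well-founded models of $\ZFCm$ of the same underlying cardinality; the natural candidates are $\<H_{\omega_1},\in>$ and $\<H_{\omega_2},\in>$. Both are transitive sets modelling $\ZFCm$ (indeed any $H_\kappa$ with $\kappa$ regular uncountable is), and they are never isomorphic, since $\<H_{\omega_1},\in>\satisfies$ ``every set is countable'' while $\<H_{\omega_2},\in>$ has the uncountable ordinal $\omega_1$ as an element. If these two structures can be made bi-interpretation synonymous, then the interpretation $I$ of $H_{\omega_2}$ inside $H_{\omega_1}$ has the whole of $H_{\omega_1}$ for its domain and the identity for its equivalence relation, so $\barin:=\in^{I}$ is literally a binary relation on the set $M:=H_{\omega_1}$ with $\<M,\barin>\iso\<H_{\omega_2},\in>$, and symmetrically the reverse interpretation recovers $\in\restrict M$ from $\barin$; because the composite interpretations are the identity, $\in$ and $\barin$ end up parametrically definable from one another in $\<M,\in,\barin>$.

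First I would secure the synonymy by forcing over a model of $\ZFC+\GCH$: arrange $2^{\aleph_0}=2^{\aleph_1}=\aleph_2$, so that $\Card{H_{\omega_1}}=\Card{H_{\omega_2}}$, and then, by an iterated reshaping and almost-disjoint coding construction, code every subset of $\omega_1$ by a real in a uniform fashion, obtaining a real parameter $p\in H_{\omega_1}$ and an $\in$-formula $\psi(x,y)$ with $\<H_{\omega_1},\psi(\cdot,\cdot;p)>\iso\<H_{\omega_2},\in>$; the reverse interpretation is the trivial one, since $\<H_{\omega_1},\in>$ is the inner model of hereditarily countable sets of $\<H_{\omega_2},\in>$, and the coding is arranged so the two interpretations are mutually inverse on the nose, hence form a synonymy (this synonymy is the content of Theorem~\ref{Theorem.Homega1-Homega2-synonymy}). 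The hard step is exactly this coding, and the reason is structural: $\<H_{\omega_1},\in>$ has ordinal height only $\omega_1$, so by replacement it can define no \emph{set} well-order of length $\geq\omega_1$, whence the taller model $\<H_{\omega_2},\in>$ must appear inside $H_{\omega_1}$ as a definable proper class; producing that class amounts to assigning, definably from a real, a code in $H_{\omega_1}$ to each element of $H_{\omega_2}$ (equivalently, to each subset of $\omega_1$), which is why a genuine coding hypothesis is required and why, in its absence, the interpretation can need a nontrivial equivalence relation, as in Question~\ref{Question.ER-required?} and Goldberg's $\AD^{L(\R)}$ example.

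Granting the synonymy, the remainder is bookkeeping. Put $M:=H_{\omega_1}$ and $\barin:=\psi(\cdot,\cdot;p)$. Then $\barin$ is an $\in$-definable (from $p$) well-founded extensional relation on $M$ with $\<M,\barin>\iso\<H_{\omega_2},\in>$; conversely $\in\restrict M$ is $\barin$-definable with parameters, obtained by transporting the defining formula of the reverse interpretation across the isomorphism $\<M,\barin>\iso\<H_{\omega_2},\in>$, which is itself definable over $\<M,\barin>$, being the inverse Mostowski collapse of a well-founded $\ZFm$-model in the manner of Theorem~\ref{Theorem.Well-founded-model-interpretation-is-definable} and Corollary~\ref{Corollary.Mutual-implies-bi-for-models-of-ZF^-}; by the synonymy, these two definitions compose to the identity. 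Now, given any formula $\varphi$ in the language $\{\in,\barin\}$, let $\varphi^{*}$ be the $\{\in\}$-formula obtained by replacing each subformula $u\barin v$ by its $\in$-definition: the instance of collection (or separation, replacement, \dots) for $\varphi$ using $\in$ as the membership relation follows from the corresponding instance for $\varphi^{*}$ in $\<M,\in>\satisfies\ZFCm$, while extensionality, foundation, pairing, union, infinity, and choice are inherited directly; the symmetric argument, eliminating $\in$ in favour of its $\barin$-definition and using $\<M,\barin>\iso\<H_{\omega_2},\in>\satisfies\ZFCm$, gives $\ZFCm$ with $\barin$ as membership. Hence $\<M,\in,\barin>\satisfies\ZFCm(\in,\barin)$, both $\in$ and $\barin$ are well-founded, and $\<M,\in>\not\iso\<M,\barin>$ by the ``every set is countable'' distinction, as desired. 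The single genuinely hard part is the coding that realises $H_{\omega_2}$ inside $H_{\omega_1}$; everything downstream of it is formula-chasing.
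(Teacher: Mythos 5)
Your construction does yield a structure with the required properties in a suitable forcing extension, but it is far heavier than necessary, and the route you take obscures the one observation that makes this theorem easy: $H_{\omega_1}$ and $H_{\omega_2}$ satisfy $\ZFCm$ when augmented with \emph{any} class predicate whatsoever, definable or not, since separation and collection for formulas mentioning an arbitrary predicate follow from the closure properties of $H_\kappa$ for regular $\kappa$. Because of this, no definability of $\barin$ from $\in$ is needed, and hence no almost-disjoint coding, no synonymy, and no appeal to Harrington's projective well-ordering. It suffices to force Luzin's hypothesis $2^\omega=2^{\omega_1}$ (e.g.\ by adding $\omega_2$ Cohen reals over a model of \GCH), take \emph{any} bijection $\pi:H_{\omega_1}\to H_{\omega_2}$, and let $\barin$ be the $\pi$-preimage of $\in\restrict H_{\omega_2}$; then $\<H_{\omega_1},\in,\barin>\cong\<H_{\omega_2},\tilde\in,\in>$ for the pushed-forward relation $\tilde\in$, each side satisfies $\ZFCm$ with the extra relation regarded merely as a predicate, and non-isomorphism is by the countability of every set on one side, exactly as you say. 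This also bypasses the most delicate step in your write-up: recovering $\in\restrict M$ as a $\barin$-definable relation. The synonymy gives you a $\barin$-definable relation $E'$ on $M$ with $\<M,E'>\cong\<M,\in>$, but to conclude that $\in\restrict M$ \emph{itself} is $\barin$-definable you must additionally show that the isomorphism is computable inside $\<M,\barin>$ (a Mostowski-collapse argument in the style of theorem \ref{Theorem.Well-founded-model-interpretation-is-definable}); you gesture at this but do not carry it out, and in any case the arbitrary-predicate observation makes it moot.

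The genuine gap is that the theorem asserts the outright existence of such an $M$, whereas your argument produces one only in a forcing extension, i.e.\ it proves only the relative consistency of the statement. A final absoluteness step is required: in the extension, pass to a countable elementary substructure to obtain a countable well-founded $\<M,\in,\barin>$ with the desired properties; the existence of such a countable well-founded structure is a $\Sigma^1_2$ assertion, hence absolute between the ground model and the extension by Shoenfield's theorem, and so such an $M$ must already exist without any forcing. Without this step you have not proved the theorem as stated.
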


\begin{proof}
Let us argue first merely that the situation is consistent with \ZFC. Assume that Luzin's hypothesis holds, that is, $2^\omega=2^{\omega_1}$; for example, this holds after adding $\omega_2$ many Cohen reals over a model of \GCH, since in this case we would have $2^\omega=2^{\omega_1}=\omega_2$. It follows that $H_{\omega_1}$ and $H_{\omega_2}$ are equinumerous, and so there is a bijection $\pi:H_{\omega_1}\to H_{\omega_2}$. Let $\tilde\in$ be the image of $\in\restrict H_{\omega_1}$ under this bijection, and let $\barin$ be the preimage of $\in\restrict H_{\omega_2}$. Thus, $\pi$ is an isomorphism of the structures
 $$\pi:\<H_{\omega_1},\in,\barin>\cong \<H_{\omega_2},\tilde\in,\in>.$$
The first of these is a model of $\ZFCm(\barin)$, using $\in$ as membership, since $H_{\omega_1}$ is a model of $\ZFCm$ with respect to any predicate. Similarly, the second is a model of $\ZFCm(\tilde\in)$, again using $\in$ as membership, since we can likewise augment $H_{\omega_2}$ with any predicate. By following the isomorphism, an equivalent way to say this is that $\<H_{\omega_1},\in,\barin>$ is a model of $\ZFCm_\in(\barin)$, using $\in$ as membership and $\barin$ as predicate, and also a model of $\ZFCm_{\barin}(\in)$, using $\barin$ as membership this time and $\in$ as a class predicate. But $\<H_{\omega_1},\in>$ is not isomorphic to $\<H_{\omega_1},\barin>$, because the first thinks every set is countable and the second does not. So this model is just as desired.

To get outright existence of the models, observe first that by taking an elementary substructure, we can find a countable model of the desired form in the forcing extension. Furthermore, the existence of a countable well-founded model $\<M,\in,\barin>$ with the desired properties is a $\Sigma^1_2$ assertion, which is therefore absolute to the forcing extension by Shoenfield’s absoluteness theorem. And so therefore there must have already been such an example in the ground model, without need for any forcing.\end{proof}

\section{$\ZFCm$ is neither solid nor tight}

In order to show that $\ZFCm$ is not solid, we need to provide different models of $\ZFCm$ that are bi-interpretable, but not isomorphic. Let us begin by describing merely how this can happen in some models of set theory. For this, we shall employ a more refined version of the argument used to prove theorem \ref{Theorem.ZFCm-double-relation-model}.

\begin{theorem}\label{Theorem.Homega1-Homega_2-bi-interpretable}
In the Solovay-Tennenbaum model of $\MA+\neg\CH$ obtained by c.c.c.~forcing over the constructible universe, the structures $\<H_{\omega_1},\in>$ and $\<H_{\omega_2},\in>$ are bi-interpretable. Thus, there can be two well-founded models of $\ZFCm$ that are bi-interpretable, but not isomorphic.
\end{theorem}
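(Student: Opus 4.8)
The plan is to exhibit interpretations in both directions and then trace the two round-trip compositions. The key leverage is that Martin's Axiom lets us code the at-most-$\aleph_1$-sized transitive closures of members of $H_{\omega_2}$ by reals, which are members of $H_{\omega_1}$, while the fact that the Solovay--Tennenbaum model is a c.c.c.\ forcing extension of $L$ supplies a parameter-free definable almost disjoint family to code against.

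One direction is trivial: $\<H_{\omega_1},\in>$ is interpreted in $\<H_{\omega_2},\in>$ as the transitive subclass $\set{x : \TC(x)\text{ is countable}}$ with the genuine membership relation and the identity as equivalence relation, since countability is definable in $H_{\omega_2}$. For the other direction I first fix the $<_L$-least sequence $\<A_\alpha : \alpha<\omega_1>$ of infinite, co-infinite, pairwise almost disjoint subsets of $\omega$; the whole $\omega_1$-sequence is of course not a member of $H_{\omega_1}$, but each proper initial segment lies in $L_{\omega_1}$, and since ``$x$ is constructible'' and ``$x<_L y$'' are expressible over $H_{\omega_1}$ (and over $H_{\omega_2}$), the binary relation $\set{(\alpha,A_\alpha) : \alpha<\omega_1}$ is definable over both structures without parameters. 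Martin's Axiom then provides almost disjoint coding: for every $X\of\omega_1$ there is a real $r$ such that $r\cap A_\alpha$ is finite exactly when $\alpha\in X$, and $X$ is recovered from $r$ by the parameter-free formula $\alpha\mapsto(r\cap A_\alpha\text{ finite})$. Since every $a\in H_{\omega_2}$ has $\TC(\singleton a)$ of size at most $\aleph_1$, after fixing a bijection with an ordinal $\le\omega_1$ it is coded by a well-founded extensional relation on a subset of $\omega_1$, hence --- via a pairing function on $\omega_1$ --- by a single subset of $\omega_1$, hence by a real. So I take the interpreted domain inside $H_{\omega_1}$ to be the reals that code a well-founded extensional relation with a distinguished top node; I declare $r\simeq s$ when the decoded relations are isomorphic, and $r\mathrel{E^*}s$ when the relation decoded from $r$ is isomorphic to the part of the relation decoded from $s$ lying below some $E_s$-predecessor of its top. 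By Mostowski's collapsing lemma the resulting quotient structure is, externally, isomorphic to $\<H_{\omega_2},\in>$, and the same recipe interprets $\<H_{\omega_1},\in>$ inside the copy of $\<H_{\omega_2},\in>$.

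The step I expect to be the main obstacle is that this decoding has to be carried out over $H_{\omega_1}$, where $\omega_1$ is a proper class and the decoded relations --- and their Mostowski collapses --- are in general proper classes, so one cannot simply form a collapse as a set and compare. The remedy is to stay pointwise and to code the witnessing isomorphisms as well: ``$E_r$ is isomorphic to $E_s$ below $n$'' is rendered as ``there is a real $t$ such that the class relation decoded from $t$ is an isomorphism of the class decoded from $r$ onto the class decoded from $s$ below $n$'', and within this the assertions of being a function, being injective, being surjective onto the relevant field, and being order preserving each quantify only over countable ordinals, which are genuine members of $H_{\omega_1}$. Such a real $t$ exists whenever the relations really are isomorphic, once again by almost disjoint coding. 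Read inside $H_{\omega_2}$, where $\omega_1$ is an honest set, nothing here is delicate.

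It remains to verify bi-interpretability by following the two compositions. Starting in $H_{\omega_1}$, interpreting $H_{\omega_2}$ and then re-interpreting $H_{\omega_1}$ inside that copy sends a hereditarily countable $a$ to the $\simeq$-class of any real coding it; and the graph ``$r$ codes $a$'' is definable over $H_{\omega_1}$, because for such $a$ the set $\TC(\singleton a)$ is genuinely countable, so an isomorphism of the class decoded from $r$ onto $\<\TC(\singleton a),\in>$ is a countable function, hence a member of $H_{\omega_1}$ over which one may quantify. Symmetrically, starting in $H_{\omega_2}$ the composition sends $b$ to the $\simeq$-class of a real coding $b$ --- which exists by Martin's Axiom --- and ``$r$ codes $b$'' is definable over $H_{\omega_2}$ by decoding against the canonical almost disjoint family and taking the Mostowski collapse. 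Thus both composition isomorphisms are definable in their respective models, which is exactly bi-interpretation. Finally, $H_{\omega_1}$ and $H_{\omega_2}$ are transitive, hence well-founded, models of $\ZFCm$ since $\omega_1$ and $\omega_2$ are regular in the Solovay--Tennenbaum model, and they are not isomorphic, as $H_{\omega_1}\satisfies$ ``every set is countable'' while $H_{\omega_2}$ contains the uncountable ordinal $\omega_1$; this yields the concluding sentence.
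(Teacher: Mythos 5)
Your proposal is correct and follows essentially the same route as the paper: the trivial interpretation of $H_{\omega_1}$ as a definable transitive subclass of $H_{\omega_2}$, and in the other direction the coding of elements of $H_{\omega_2}$ by well-founded extensional relations on $\omega_1$ with a distinguished node, themselves coded by reals via almost-disjoint coding against an $L$-definable almost disjoint family, with the equivalence relation given by isomorphism of the coded structures and the witnessing isomorphisms (being of size at most $\omega_1$) likewise accessible to $H_{\omega_1}$ via coding. Your treatment of the two composition maps and of non-isomorphism matches the paper's as well.
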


The features we require in the Solovay-Tennenbaum model are the following:
\begin{itemize}
  \item $H_{\omega_1}$ has a definable almost disjoint $\omega_1$-sequence of reals;
  \item every subset $A\of\omega_1$ is coded by a real via almost-disjoint coding with respect to this sequence.
\end{itemize}
In other words, there should be a definable $\omega_1$-sequence $\<a_\alpha\mid\alpha<\omega_1>$ of infinite subsets $a_\alpha\of\omega$ with any two having finite intersection; and for every $A\of\omega_1$ there should be $a\of\omega$ for which $\alpha\in A\iff\ a\intersect a_\alpha$ is infinite. These properties are true in the Solovay-Tennenbaum model $L[G]$, because we can define the almost disjoint family in $L$, and by Martin's axiom every subset of $\omega_1$ is coded with respect to it by almost-disjoint coding.

\begin{proof}
Let us assume we have the two properties isolated above. The structure $\<H_{\omega_1},\in>$, of course, is a definable substructure of $\<H_{\omega_2},\in>$, which gives one direction of the interpretation. It remains to interpret $\<H_{\omega_2},\in>$ inside $\<H_{\omega_1},\in>$ and to prove that this is a bi-interpretation. The second assumption above implies, of course, that $2^\omega=2^{\omega_1}$, and so at least the two structures have the same cardinality. But more, because the almost-disjoint sequence is definable in $H_{\omega_1}$ and all the reals are there, the structure $\<H_{\omega_1},\in>$ in effect has access to the full power set $P(\omega_1)$; the subsets $A\of\omega_1$ are uniformly definable in the structure $\<H_{\omega_1},\in>$ from real parameters. And furthermore, every set in $H_{\omega_2}$ is coded by a subset of $\omega_1$. Specifically, if $x\in H_{\omega_2}$ then $x$ is an element of a transitive set $t$ of size $\omega_1$, with $\<t,\in>\cong\<\omega_1,E>$ for some well-founded extensional relation $E$ on $\omega_1$. The set $x$ can be in effect coded by specifying $E$ and the ordinal $\alpha$ representing $x$ in the structure $\<\omega_1,E>$. Since $H_{\omega_1}$ contains all countable sequences of ordinals, it can correctly identify which relations $E$ on $\omega_1$, as encoded by a real via the almost-disjoint coding, are well-founded and extensional. Let us define an equivalence relation
$a\simeq a'$ for reals $a,a'\of\omega$ coding such pairs $(\alpha,E)$ and $(\alpha',E')$ that represent the same set, which happens just in case there is an isomorphism of the hereditary $E$-predecessors of $\alpha$ to the hereditary $E'$-predecessors of $\alpha'$; in other words, when the transitive closures of the corresponding encoded sets $\singleton{x}$ and $\singleton{x'}$ are isomorphic. The isomorphism itself is a map of size at most $\omega_1$, which can therefore be seen definably in the structure $\<H_{\omega_1},\in>$. And we can recognize when the set $x$ coded by $(\alpha,E)$ is an element of the set $y$ coded by $(\beta,F)$, since this just means that $(\alpha,E)$ is equivalent to $(\gamma,F)$ for some $\gamma\mathrel{F}\beta$. Since every element of $H_{\omega_2}$ is coded in this way by relations on $\omega_1$ and hence ultimately by reals $a\of\omega$, we see that $\<H_{\omega_2},\in>$ is interpreted in $\<H_{\omega_1},\in>$ by the codes $(\alpha,E)$ for well-founded extensional relations $E$ on $\omega_1$, which are themselves coded by reals via almost-disjoint coding. Ultimately, objects $x\in H_{\omega_2}$ are represented by a real $a\of$ coding a pair $(\alpha,E)$ which codes a transitive set having $x$ as an element at index $\alpha$ in that coding.

This interpretation is a bi-interpretation, because $H_{\omega_1}$ can construct suitable codes for any object $x\in H_{\omega_1}$, and thereby recognize how itself arises within the interpreted copy of $H_{\omega_2}$. And conversely, $H_{\omega_2}$ can define $H_{\omega_1}$ as a submodel and it can see how the objects in $H_{\omega_2}$ are represented inside that model via almost disjoint coding. The models $\<H_{\omega_1},\in>$ and $\<H_{\omega_2},\in>$ are therefore well-founded models of $\ZFCm$ that are bi-interpretable, but they are not isomorphic, because the first model thinks every set is countable and the second does not.
\end{proof}

We can improve the previous theorem to achieve an actual synonymy of the two structures as follows.

\begin{theorem}\label{Theorem.Homega1-Homega2-synonymy}
If \ZFC\ is consistent, then it is consistent that there is a membership relation $\barin$ definable in $\<H_{\omega_1},\in>$ such that $\<H_{\omega_1},\barin>\cong\<H_{\omega_2},\in>$, putting these structures into bi-interpretation synonymy.
\end{theorem}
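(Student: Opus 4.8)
The plan is to carry out the argument not in the plain Solovay--Tennenbaum model, but in a forcing extension $W$ of $L$ in which, besides the coding features used in theorem~\ref{Theorem.Homega1-Homega_2-bi-interpretable}, the structure $\langle H_{\omega_1},\in\rangle$ admits a well-ordering of its reals that is definable in it (with a parameter in $H_{\omega_1}$, if necessary). Such a model can be produced by standard coding technology: interleave the usual ccc iteration forcing Martin's axiom with almost-disjoint coding that, at each stage, records into later reals not only the bits of a bookkept subset of $\omega_1$ (as in the Solovay--Tennenbaum argument) but also enough information to recover a fixed well-ordering of the reals so far built --- a variant of the constructions yielding $\MA+\neg\CH$ together with a projective well-ordering of the reals. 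In $W$ one then has: a definable almost-disjoint $\omega_1$-sequence; every $A\subseteq\omega_1$ almost-disjoint coded by a real; $2^\omega=2^{\omega_1}$; and a definable well-ordering $<^*$ of $\mathbb{R}$ inside $\langle H_{\omega_1},\in\rangle$. I expect the existence of such a $W$ to be routine, and the rest of the proof takes place inside it.

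Working in $W$, I would first rerun the proof of theorem~\ref{Theorem.Homega1-Homega_2-bi-interpretable} verbatim, interpreting $\langle H_{\omega_2},\in\rangle$ in $\langle H_{\omega_1},\in\rangle$ by reals $c$ coding pairs $(\alpha,E)$ for well-founded extensional relations $E$ on $\omega_1$, modulo the definable equivalence relation $\simeq$ identifying codes of the same set, the other direction being just $\langle H_{\omega_1},\in\rangle$ as a definable substructure of $\langle H_{\omega_2},\in\rangle$. The key new step is to eliminate $\simeq$: using $<^*$, select from each $\simeq$-class its $<^*$-least member. This yields a definable domain $D\subseteq H_{\omega_1}$ of ``canonical codes'' and a definable membership relation $\in^D$ on $D$ with $\langle D,\in^D\rangle\cong\langle H_{\omega_2},\in\rangle$ outright, with no quotient, so that the map $g\colon H_{\omega_2}\to H_{\omega_1}$ sending $x$ to its canonical code is a definable injection with range $D$, giving a relative interpretation of $\langle H_{\omega_2},\in\rangle$ in $\langle H_{\omega_1},\in\rangle$ whose equivalence relation is the identity. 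This elimination is the main obstacle, and it is precisely where a generic model will not do: by Goldberg's answer to question~\ref{Question.ER-required?}, there are models of \ZFC\ in which no such definable selector exists, which is why the statement must be a consistency result.

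With the equivalence relation gone, the two interpretations --- $\langle H_{\omega_2},\in\rangle$ in $\langle H_{\omega_1},\in\rangle$ on domain $D$, and $\langle H_{\omega_1},\in\rangle$ in $\langle H_{\omega_2},\in\rangle$ on the transitive class of hereditarily countable sets --- both now use the identity equivalence relation and still form a bi-interpretation. The final step is to symmetrise this into a synonymy by the class Cantor--Schr\"oder--Bernstein argument, as in the remarks after theorem~\ref{Theorem.Vaananen} and following Friedman--Visser~\cite{friedman2014bi}: combine the inclusion injection $e\colon H_{\omega_1}\hookrightarrow H_{\omega_2}$ with the canonical-code injection $g\colon H_{\omega_2}\hookrightarrow H_{\omega_1}$ into a bijection $h\colon H_{\omega_1}\to H_{\omega_2}$, defined for each $a$ by tracing the back-and-forth chain $a,\ g^{-1}(a),\ e^{-1}(g^{-1}(a)),\dots$ and splitting according to whether it terminates on the $H_{\omega_1}$ side, the $H_{\omega_2}$ side, or not at all. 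Every step of this recursion is carried out inside $\langle H_{\omega_1},\in\rangle$ by representing the intermediate $H_{\omega_2}$-elements through their canonical codes in $D$, using that ``this code codes a hereditarily countable set'' and ``this is a canonical code'' are both definable there. Setting $\barin=\{(a,b)\mid h(a)\in h(b)\}$ gives a relation definable in $\langle H_{\omega_1},\in\rangle$ with $h\colon\langle H_{\omega_1},\barin\rangle\cong\langle H_{\omega_2},\in\rangle$, and, unwinding, the package of $e$, $g$ and $h$ is a bi-interpretation whose domains are the whole structures and whose equivalence relations are the identity, i.e.\ a bi-interpretation synonymy, as required.
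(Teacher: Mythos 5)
Your proposal is correct and follows essentially the same route as the paper: work in a forcing extension of $L$ satisfying $\MA+\neg\CH$ together with a projectively definable well-ordering of the reals (the paper simply cites Harrington's theorem for this model rather than re-running the coding construction), use the well-ordering to select canonical codes and eliminate the equivalence relation from the interpretation of $\<H_{\omega_2},\in>$ in $\<H_{\omega_1},\in>$, and then apply the definable Cantor--Schr\"oder--Bernstein argument to the two injections to produce a definable bijection $h$ and set $\barin=\set{(a,b)\mid h(a)\in h(b)}$. Your extra detail on carrying out the back-and-forth chain inside $\<H_{\omega_1},\in>$ via canonical codes is a sound elaboration of the paper's remark that the theorem is ``sufficiently constructive.''
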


\begin{proof}
The argument relies on a result of Leo Harrington \cite{Harrington1977:Long-projective-wellorderings}, showing that it is relatively consistent with \ZFC\ that $\MA+\neg\CH$ hold, yet there is a projectively definable well-ordering of the reals; in fact, Harrington achieves this in a forcing extension of $L$. (Many thanks to Gabe Goldberg \cite{Goldberg2020.MO350585:Can-Homega1-Homega2-be-in-synonymy?} for pointing out Harrington's paper in response to our MathOverflow question concerning the possibility of this theorem.) In such a model, we can choose least elements within each equivalence class of reals in the interpretation used in the proof of theorem \ref{Theorem.Homega1-Homega_2-bi-interpretable}, and thereby omit the need for an equivalence relation in the first place. So every element of $H_{\omega_2}$ will be coded by a real. Since the decoding of hereditarily countable sets can be undertaken inside $H_{\omega_1}$, we thereby have a definable injection of $H_{\omega_1}$ into the coding reals, and so by the Cantor-Schröder–Bernstein theorem, $H_{\omega_1}$ is bijective with the coding reals. Because the Cantor-Schröder–Bernstein theorem is sufficiently constructive, this bijection is definable inside $\<H_{\omega_1},\in>$, and so we may pull back the interpreted membership relation on the coding reals to get a definable relation $\barin$ on all of $H_{\omega_1}$, such that $\<H_{\omega_1},\barin>\cong\<H_{\omega_2},\in>$, as desired. This is a bi-interpretation synonymy, since $H_{\omega_1}$ can see how it is represented in that coding, and $H_{\omega_2}$ can define $H_{\omega_1}$ and also see how its elements are coded.
\end{proof}

We find it interesting to compare theorems \ref{Theorem.Homega1-Homega_2-bi-interpretable} and \ref{Theorem.Homega1-Homega2-synonymy} with \cite[corollary~2.5.1]{enayat2017variations}, which says that $\ZFCm+$``every set is countable'' is solid, and similarly corollary 2.7.1, which asserts that $\ZFCm+``V=H_{\kappa^+}$ for some inaccessible cardinal $\kappa$'' also is solid.

The following theorem explains somewhat the need in theorem \ref{Theorem.Homega1-Homega_2-bi-interpretable} for assuming that we were working close to $L$.

\begin{theorem}
If there is no projectively definable $\omega_1$-sequence of distinct reals, then $\<H_{\omega_2},\in>$ cannot be interpreted in $\<H_{\omega_1},\in>$. In particular, in this case the structures are not bi-interpretable nor even mutually interpretable.
\end{theorem}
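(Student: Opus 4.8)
The plan is to use the well-known correspondence between definability over $\<H_{\omega_1},\in>$ and projective definability, and then to pull an $\omega_1$-sequence of reals witnessing $\aleph_1\le 2^{\aleph_0}$ that exists inside $H_{\omega_2}$ back through a hypothetical interpretation, obtaining a projective such sequence and contradicting the hypothesis.

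First I would recall the standard fact that, for relations on reals, definability (with real parameters) over $\<H_{\omega_1},\in>$ coincides with projective definability (with real parameters). Every hereditarily countable set $y$ is coded by a real $e$ coding a well-founded extensional relation on $\omega$ whose Mostowski collapse sends the top point to $y$; the set of such codes is $\Pi^1_1$, the relations ``$e,e'$ code the same set'' and ``the set coded by $e$ is a member of the set coded by $e'$'' are projective, and translating each quantifier over $H_{\omega_1}$ into a quantifier over codes turns any $\<H_{\omega_1},\in>$-definable relation into a projective one, the converse being immediate. Consequently, if $\<H_{\omega_2},\in>$ were interpreted in $\<H_{\omega_1},\in>$, then, using pairing in $H_{\omega_1}$ to dispense with tuples and recoding hereditarily countable sets by reals, we would obtain projective-in-a-real-parameter relations: a domain $D\subseteq\R$, an equivalence relation $\simeq$ on $D$, and a relation $E$ on $D$, together with an isomorphism $j\colon\<H_{\omega_2},\in>\cong\<D,E>/\simeq$.

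Next I would transfer a witnessing sequence out of $H_{\omega_2}$. Since every real is hereditarily countable and $H_{\omega_2}$ is transitive, $\R^{H_{\omega_2}}=\R$ and $\omega_1^{H_{\omega_2}}=\omega_1$; and since $H_{\omega_2}\satisfies\ZFCm$, inside $H_{\omega_2}$ there is an injection $g\colon\omega_1\to\R$. Working in the interpreted copy $\<D,E>/\simeq$ one defines its own $\omega$ and, uniformly in an external natural number $n$, recognizes the $E$-representatives of the interpreted numeral $\hat n$ (a bounded recursion on $n$, expressible with one real quantifier over a formula arithmetic in $D$, $E$, $\simeq$); from this the relation ``$\tilde r\in D$ is an $E$-representative of $j(r)$'' between external reals $\tilde r$ and $r$ is projective. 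Fixing in addition a real $c$ representing the element $j(g)$, the set $X=\ran(g)$ and the wellordering $\prec$ on $X$ obtained by pulling the ordering of $\omega_1$ back through $g$ are then projective (in the interpretation parameters together with $c$). Because $H_{\omega_2}$ is transitive, it is correct about wellfoundedness and about $\omega_1$, so $\<X,\prec>$ is genuinely a wellordering of a set of reals of order type $\omega_1$, and its enumeration is a projectively definable $\omega_1$-sequence of distinct reals, contrary to hypothesis. For the ``in particular'' clause, note that $\<H_{\omega_1},\in>$ is a definable substructure of $\<H_{\omega_2},\in>$ and hence is interpreted in it, so mutual interpretability --- and a fortiori bi-interpretability --- of the two structures would yield an interpretation of $\<H_{\omega_2},\in>$ in $\<H_{\omega_1},\in>$, which we have just excluded.

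The main obstacle is bookkeeping rather than conceptual: one must check that the equivalence ``$\<H_{\omega_1},\in>$-definable $=$ projective'' survives the recoding of the interpretation's domain (tracking that the code-theoretic relations are $\Pi^1_1$ or $\Sigma^1_1$, so projectivity is preserved with only a finite increase in complexity), and that the identification of the interpreted numerals $\hat n$ is genuinely uniform in $n$, so that $X$ and $\prec$ come out projective. It is also worth stating explicitly that ``projectively definable'' is understood to permit real parameters --- matching its usage in the earlier theorems --- so that the single extra parameter $c$ coding $j(g)$ is harmless.
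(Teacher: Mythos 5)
Your proposal is correct and follows essentially the same route as the paper: both arguments pull the $\omega_1$-sequence of distinct reals that lives in $H_{\omega_2}$ back through the hypothesized interpretation into $\<H_{\omega_1},\in>$, and then use the standard coding of hereditarily countable sets by reals to convert $\<H_{\omega_1},\in>$-definability into projective definability, contradicting the hypothesis. Your write-up simply carries out in detail the bookkeeping that the paper leaves implicit.
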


The hypothesis is a direct consequence of sufficient large cardinals, since it is a consequence of $\AD^{L(\R)}$. But meanwhile, it can also be forced by the \Levy\ collapse of an inaccessible cardinal; and since it implies $\omega_1$ is inaccessible to reals, it is equiconsistent with an inaccessible cardinal.

\begin{proof}
If $H_{\omega_2}$ were interpreted in $H_{\omega_1}$, then since the former structure definitely has an $\omega_1$-sequence of distinct reals, such a sequence would be definable (from parameters) in the structure $\<H_{\omega_1},\in>$. But this latter structure is interpreted in the reals in second-order arithmetic, by coding hereditarily countable sets by relations on $\omega$. By means of this interpretation, first-order assertions in $\<H_{\omega_1},{\in}>$ can be viewed as projective assertions. And so there would be a projectively definable $\omega_1$-sequence of distinct reals, contrary to our assumption.
\end{proof}

Meanwhile, we can extend theorem \ref{Theorem.Homega1-Homega_2-bi-interpretable} from the consistency result to prove outright in \ZFC\ that there are always transitive counterexample models to be found.

\begin{theorem}\label{Theorem.ZFCm-is-not-solid}
 The theory $\ZFCm$ is not solid, and not even semantically tight, not even for well-founded models. Indeed, there are transitive models $\<M,\in>$, $\<N,\in>$ of $\ZFCm$ that form a bi-interpretation synonymy, but are not isomorphic.
\end{theorem}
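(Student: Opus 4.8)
The plan is to bootstrap Theorem~\ref{Theorem.Homega1-Homega2-synonymy} into an outright existence statement by exactly the $\Sigma^1_2$-absoluteness device already used in the proof of Theorem~\ref{Theorem.ZFCm-double-relation-model}. First I would recall the setup of Theorem~\ref{Theorem.Homega1-Homega2-synonymy}: by Harrington's theorem there is a forcing extension $L[G]\models\ZFC+\MA+\neg\CH$ carrying a projectively definable well-ordering of the reals, and in such a model the proof produces a parameter-free definable membership relation $\barin$ on $\<H_{\omega_1},\in>$ with $\<H_{\omega_1},\barin>\cong\<H_{\omega_2},\in>$, the bi-interpretation being a synonymy: the domains of the two interpreted structures are the whole structures, the interpreting equivalence relations are the identity, and the two round-trip maps are the identity. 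The essential point for us is that the full interpretation data on both sides (the interpretation of $\<H_{\omega_2},\in>$ in $\<H_{\omega_1},\in>$ via almost-disjoint codes with canonical representatives, and the interpretation of $\<H_{\omega_1},\in>$ in $\<H_{\omega_2},\in>$ as the hereditarily countable sets) is given by finitely many formulas with no set parameters.

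Next I would pass to countable transitive witnesses inside $L[G]$ by the L\"owenheim--Skolem argument already used in Theorem~\ref{Theorem.ZFCm-double-relation-model}: take a countable $X\prec\<H_\theta^{L[G]},\in>$ containing $H_{\omega_1}$, $H_{\omega_2}$, the relation $\barin$, and the relevant formulas, and let $\<\bar X,\in>$ be its transitive collapse. The images $M$, $N$ of $H_{\omega_1}$, $H_{\omega_2}$ are then countable transitive models of $\ZFCm$, the collapsed relation (still denoted $\barin$) on $M$ is defined in $\<M,\in>$ by the same formula, $\<M,\barin>\cong\<N,\in>$ via the collapsed (still definable) maps, and this is still a bi-interpretation synonymy; moreover $M\models$ ``every set is countable'' while $N\models$ its negation, so $M\not\cong N$. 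Now I would observe that the assertion ``there are reals coding countable well-founded structures $\<M,\in>$, $\<N,\in>$ together with finitely many formulas that verifiably define a bi-interpretation synonymy between them, with $M\models$ `every set is countable' and $N\models$ its negation'' is $\Sigma^1_2$: it is existential over a real code, and its matrix asserts well-foundedness of the coded relations ($\Pi^1_1$), that the coded structures satisfy the recursive theory $\ZFCm$ ($\Pi^1_1$ via the $\Delta^1_1$ satisfaction relation for countable structures), that the named formulas define an interpretation each way with identity round-trip maps, and that the two designated sentences hold --- the last clauses being arithmetic in the code since they concern satisfaction of fixed formulas in the coded structures. This $\Sigma^1_2$ sentence holds in $L[G]$; since $L[G]$ is a forcing extension of $L$, Shoenfield absoluteness gives that it holds in $L$, and then, again by Shoenfield absoluteness between $L$ and $V$, it holds in $V$. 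Hence in $\ZFC$ there are countable well-founded --- so, via Mostowski collapse, transitive --- models $\<M,\in>$, $\<N,\in>$ of $\ZFCm$ forming a bi-interpretation synonymy but not isomorphic; since solidity implies semantic tightness, $\ZFCm$ is neither semantically tight (not even for well-founded models) nor solid, which is the theorem.

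The step I expect to require the most care is the complexity bookkeeping: one must check that ``$\vec\varphi$ defines a bi-interpretation synonymy between $\<M,\in>$ and $\<N,\in>$'' really is arithmetic (at worst $\Delta^1_1$) in the real codes, which relies on the interpretations being parameter-free --- this is precisely what Harrington's projective well-ordering buys, since it is what allowed the equivalence relation on codes to be eliminated in Theorem~\ref{Theorem.Homega1-Homega2-synonymy} in the first place. A second, routine, point to verify is that the elementary-submodel-and-collapse step genuinely transfers the synonymy, which it does because ``$\vec\varphi$ defines a bi-interpretation synonymy between $H_{\omega_1}$ and $H_{\omega_2}$'' is a conjunction of first-order assertions true in $H_\theta^{L[G]}$ and reflected into $X$, and the collapse is an isomorphism. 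No genuinely new difficulty beyond Theorems~\ref{Theorem.Homega1-Homega_2-bi-interpretable} and \ref{Theorem.Homega1-Homega2-synonymy} is involved; the content is the packaging into a $\Sigma^1_2$ statement plus two applications of Shoenfield absoluteness.
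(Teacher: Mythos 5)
Your proposal is correct and follows essentially the same route as the paper's own proof: start from the synonymy of $H_{\omega_1}$ and $H_{\omega_2}$ in the Harrington-style forcing extension of $L$, pass to a countable elementary substructure and collapse to get countable transitive witnesses, observe that the existence of such witnesses is $\Sigma^1_2$, and pull the statement back to $L$ and hence to $V$ by Shoenfield absoluteness. The additional bookkeeping you supply (parameter-freeness of the interpretation data and the $\Delta^1_1$ complexity of ``these formulas define a synonymy'' in the real codes) is exactly the verification the paper leaves implicit.
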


\begin{proof}
By theorem \ref{Theorem.Homega1-Homega2-synonymy}, there are such transitive sets in a forcing extension of $L$. In that model, by taking a countable elementary substructure, we can find countable transitive sets with the desired feature. And furthermore, the assertion that there are such countable transitive sets like that, for which the particular bi-interpretation definitions work and form a synonymy, is a statement of complexity $\Sigma^1_2$. By Shoenfield absoluteness, this statement must already be true in $L$, and hence also in $V$. So there are countable transitive models $\<M,\in>$ and $\<N,\in>$ of $\ZFCm$ that form a bi-interpretation synonymy, but are not isomorphic.
\end{proof}

And we can also use the argument to show that $\ZFCm$ is not tight.

\begin{theorem}\label{Theorem.ZFCm-is-not-tight}
$\ZFCm$ is not tight.
\end{theorem}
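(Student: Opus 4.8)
The plan is to promote the model-level bi-interpretation of Theorem~\ref{Theorem.Homega1-Homega_2-bi-interpretable} to a bi-interpretation of complete theories. Concretely, work in the Solovay--Tennenbaum model $L[G]$ of that theorem, and set $T_1=\Th\<H_{\omega_1},\in>$ and $T_2=\Th\<H_{\omega_2},\in>$, the complete first-order theories of these two structures as computed in $L[G]$. Both theories extend $\ZFCm$, since $H_{\omega_1}$ and $H_{\omega_2}$ are (well-founded) models of $\ZFCm$, and they are distinct, since $T_1\proves$``every set is countable'' while $T_2$ refutes it. So it suffices to check that $T_1$ and $T_2$ are bi-interpretable, which I would do by reusing the two interpretation schemes already constructed in the proof of Theorem~\ref{Theorem.Homega1-Homega_2-bi-interpretable}: the interpretation $I$ of $\<H_{\omega_2},\in>$ inside $\<H_{\omega_1},\in>$ by almost-disjoint codes for well-founded extensional relations on $\omega_1$ (together with its congruence $\simeq$), using the parameter-free, $\<H_{\omega_1},\in>$-definable $<_L$-least almost-disjoint $\omega_1$-family of reals; and the interpretation $J$ of $\<H_{\omega_1},\in>$ inside $\<H_{\omega_2},\in>$ as the (identity-congruence) definable substructure of hereditarily countable sets.

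Granting that $I$ and $J$ are parameter-free, the rest is bookkeeping with completeness. For any sentence $\varphi\in T_2$, the translation $\varphi^I$ is a sentence true in $\<H_{\omega_1},\in>^{L[G]}$, since the structure $I$ interprets there is isomorphic to $\<H_{\omega_2},\in>^{L[G]}\satisfies\varphi$; as $T_1$ is the complete theory of $\<H_{\omega_1},\in>^{L[G]}$, this yields $T_1\proves\varphi^I$. In particular $T_1$ proves the translations of the $\ZFCm$ axioms, of the axioms declaring $\simeq$ a congruence and $I$ coherent, and indeed of every axiom of $T_2$, so $I$ interprets $T_2$ in $T_1$; symmetrically $J$ interprets $T_1$ in $T_2$. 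Bi-interpretability is likewise immediate: the statement that the definable composite map $f_1$ is an isomorphism of the universe onto the model obtained by interpreting via $J$ inside the model interpreted via $I$ is a single first-order sentence, shown in Theorem~\ref{Theorem.Homega1-Homega_2-bi-interpretable} to hold in $\<H_{\omega_1},\in>^{L[G]}$, hence a member of $T_1$; the matching sentence holds in $\<H_{\omega_2},\in>^{L[G]}$ and so belongs to $T_2$. Thus $T_1$ and $T_2$ are distinct bi-interpretable extensions of $\ZFCm$, and $\ZFCm$ is not tight.

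The one delicate point, and the thing I would state with care, is that the interpretations of Theorem~\ref{Theorem.Homega1-Homega_2-bi-interpretable} really can be taken without parameters: this rests on the almost-disjoint family being a definable class of $\<H_{\omega_1},\in>$ via the constructibility order restricted to reals, which in turn uses that the forcing producing $L[G]$ was ccc over $L$ (so that $\omega_1^{L}=\omega_1^{L[G]}$ and the relevant $L$-structure is captured inside $H_{\omega_1}^{L[G]}$). Everything else is just the trivial observation that a complete theory proves every sentence true in its defining model, so there is no genuine obstacle beyond isolating the parameter-free interpretation data.
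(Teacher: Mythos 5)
Your proof is correct, and it reaches the same destination as the paper by a slightly but genuinely different route. The paper does not take the complete theories of the two structures; instead it writes down explicit (recursively axiomatized) theories: $T_2$ is $\ZFCm$ plus ``$\omega_1$ exists but $\omega_2$ does not,'' ``$\omega_1=\omega_1^L$,'' ``$\omega_2=\omega_2^L$,'' and ``every subset of $\omega_1$ is almost-disjoint coded by a real relative to the $L$-least family,'' while $T_1$ is $\ZFCm$ plus ``every set is countable'' plus the assertion that the interpretation from theorem~\ref{Theorem.Homega1-Homega_2-bi-interpretable} yields a model of $T_2$. The cost of that choice is that the bi-interpretation must be verified in \emph{every} (possibly ill-founded) model of those axioms, not just in the standard structures; the benefit is that the witnessing theories are explicitly and recursively axiomatized, and their identity does not depend on a particular generic filter. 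Your choice of $\Th\<H_{\omega_1},\in>$ and $\Th\<H_{\omega_2},\in>$ trades that away for a trivial verification: completeness converts every semantic fact about the one standard model into a theorem of the theory, so the interpretation axioms, the congruence conditions, and the single sentence asserting that the composite map is an isomorphism are all automatically provable. You are also right to flag parameter-freeness as the one delicate point, and your justification (definability of the $<_L$-least almost disjoint family inside $H_{\omega_1}$, using $\omega_1^L=\omega_1^{L[G]}$) matches what the paper relies on. The only loose end worth a sentence is that your theories are constructed inside $L[G]$; to get the theorem outright rather than in that model, note that the existence of distinct bi-interpretable deductively closed extensions of $\ZFCm$ is (for complete theories) a $\Sigma^1_1$ assertion about the coding reals and so transfers by absoluteness, exactly as the paper does for theorem~\ref{Theorem.ZFCm-is-not-solid}.
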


\begin{proof}
We shall find two extensions of $\ZFCm$ that are bi-interpretable, but not the same theory. Let us take $T_1$ and $T_2$ be the theories describing the situation of $\<H_{\omega_1},\in>$ and $\<H_{\omega_2},\in>$ in theorem \ref{Theorem.Homega1-Homega_2-bi-interpretable}. Specifically, let $T_2$ assert $\ZFCm$ plus the assertion that $\omega_1$ exists but not $\omega_2$, that $\omega_1=\omega_1^L$, that $\omega_2=\omega_2^L$, and that every subset of $\omega_1$ is coded by a real using the almost-disjoint coding with respect to the $L$-least almost-disjoint family $\<a_\alpha\mid\alpha<\omega_1>$. Let $T_1$ be the theory $\ZFCm$ plus the assertion that every set is countable and that the interpretation of $H_{\omega_2}$ in $H_{\omega_1}$ used in the proof of theorem \ref{Theorem.Homega1-Homega_2-bi-interpretable} defines a model of $T_2$. That is, $T_1$ asserts that the interpretation does indeed interpret a model of $T_2$. These two theories are bi-interpretable, using the interpretations provided in the proof of theorem \ref{Theorem.Homega1-Homega_2-bi-interpretable}, because any model of $T_1$ interprets a model of $T_2$, and any model of $T_2$ can define its $H_{\omega_1}$, which will be a model of $T_1$, and the composition maps are definable just as before. \end{proof}

\section{Zermelo set theory is neither solid nor tight}

We should like now to consider the case of Zermelo set theory. We shall prove that nontrivial instances of bi-interpretation occur in models of Zermelo set theory, and consequently Zermelo set theory is neither solid nor tight. Specifically, we shall prove the following:

\begin{theorem}\label{Theorem.Zermelo-not-solid-not-tight}\
 \begin{enumerate}
  \item {\rm Z} is not semantically tight (and hence not solid), not even for well-founded models: there are bi-interpretable well-founded models of Zermelo set theory that are not isomorphic.
  \item Every model of \ZF\ is bi-interpretable with a transitive inner model of Zermelo set theory, in which the replacement axiom fails.
  \item {\rm Z} is not tight: there are distinct bi-interpretable strengthenings of Z.
 \end{enumerate}
\end{theorem}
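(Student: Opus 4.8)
The heart of the matter is statement~(2); statements~(1) and~(3) will follow from it by routine packaging. For~(2) the plan is to import a construction of Mathias that produces, inside any $M\satisfies\ZF$, a $\emptyset$-definable transitive subclass $W=W^M$ which is a ``slim'' model of Zermelo set theory: $W$ is closed under its own power-set operation and satisfies every Zermelo axiom, it is thin enough that replacement fails in $W$, yet it is still rich enough that \emph{every} set of $M$ is coded by an element of $W$ in a uniformly $W$-decodable way. Concretely, for cofinally many ordinals $\alpha$ the class $W$ contains a code for $V^M_\alpha$, that is, a well-founded extensional relation $E$ on an ordinal of cardinality $|V^M_\alpha|$ whose transitive collapse is $\<V^M_\alpha,\in>$; since every $x\in M$ lies in $V^M_\alpha$ for cofinally many $\alpha$, the decoding sending (a code $E$, an index $\xi$ in its field) to the set named by $\xi$ lets $W$ interpret all of $M$. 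I would first quote Mathias's theorem for the existence of such a $W$ (the verification that this thin $W$ nonetheless satisfies Power Set is the delicate model-theoretic point, and is precisely what Mathias supplies), recording in addition that $W$ can definably recognize when two codes name the same set --- the required isomorphism of hereditary predecessors is itself a set of cardinality at most $|V^M_\alpha|$, hence available in $W$ --- and when one named set is a member of another.

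Next I would verify that this is genuinely a bi-interpretation. One direction is immediate: $W$ is a definable transitive class in $M$, so $M$ interprets $W$, and the composite isomorphism $M\to\overline{M}$ (the decoded copy of $M$ sitting inside the copy of $W$ inside $M$) is definable in $M$, since $W$ and the decoding are both $M$-definable and $M$ can read off, for each $x$, the equivalence class of codes naming it. In the other direction, $W$ interprets $M$ via the decoding, yielding $M^*\iso M$, and the composite $W\to\overline{W}$ is definable in $W$ because inside the decoded structure $M^*$ one may run Mathias's construction again to form $W^{M^*}$, and $W$ can see exactly how each of its own elements is represented among the codes living inside $M^*$. The main obstacle is exactly this definability check: one cannot appeal here to Theorem~\ref{Theorem.Well-founded-model-interpretation-is-definable} or Corollary~\ref{Corollary.Mutual-implies-bi-for-models-of-ZF^-}, since those arguments use the collection scheme, which $Z$ lacks, so the definability of the round trips must be extracted by hand from the explicit shape of the coding. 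The remaining bookkeeping --- that the interpreted equivalence relations are congruences and the interpreted relations and the decoding map are well defined modulo them --- is routine. This yields~(2): $M$ is bi-interpretable with $W$, a transitive inner model of $Z$ in which replacement fails.

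For statement~(1), I would pass from~(2) to well-founded set models exactly as in the proof of Theorem~\ref{Theorem.ZFCm-is-not-solid}. Taking a countable elementary substructure of $\<V_\theta,\in>$ for suitable $\theta$ and collapsing, and using that the assertion ``there are countable transitive $\<M,\in>$ and $\<W,\in>$ for which the explicit bi-interpretation formulas of~(2) work, with $\<M,\in>$ a model of $\ZF$ and $\<W,\in>$ a model of $Z$ in which replacement fails'' is $\Sigma^1_2$ and hence absolute by Shoenfield, produces countable transitive --- so well-founded --- models $\<M,\in>\satisfies\ZF$ (in particular $\satisfies Z$) and $\<W,\in>\satisfies Z$ that are bi-interpretable yet not isomorphic, the latter because $M$ satisfies replacement while $W$ does not.

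For statement~(3), I would mirror the proof of Theorem~\ref{Theorem.ZFCm-is-not-tight}. Let $T_1=\ZF$, and let $T_2$ be $Z$ together with the assertion ``$V=W^M$ for the transitive class $M$ recovered by the decoding, where $M\satisfies\ZF$'' --- a first-order scheme, in view of the first paragraph. Both are strengthenings of $Z$, and they are distinct because $T_1$ proves replacement while $T_2$ refutes it. They are bi-interpretable by the interpretations of~(2): any model of $T_1$ interprets its $W^M\satisfies T_2$, any model of $T_2$ decodes a model of $T_1$ whose Mathias model is the original, and the composite maps are definable just as in~(2); here one uses that $T_2$ is consistent relative to $\ZF$, witnessed by $W^M$ for any $M\satisfies\ZF$.
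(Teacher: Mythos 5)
Your high-level strategy matches the paper's --- use Mathias's slim supertransitive inner models of $Z$ and show the inclusion is one half of a bi-interpretation --- but the mechanism you propose for the other half (interpreting $M$ back inside $W$ by coding each $V^M_\alpha$ as a well-founded extensional relation on an ordinal of cardinality $|V^M_\alpha|$) has genuine problems. First, such codes require a well-ordering of $V^M_\alpha$, i.e.\ \AC, whereas statement~(2) is asserted for every model of \ZF. Second, your justification that the isomorphisms between codes are ``of cardinality at most $|V^M_\alpha|$, hence available in $W$'' misreads what membership in the Mathias model means: it is governed by the growth rate of $|\TC(\singleton{x})\intersect V_n|$, not by cardinality --- $V_\omega$ itself is countable yet excluded. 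Third, and most seriously, the step that makes this a \emph{bi}-interpretation rather than a mutual interpretation --- the $W$-definability of the round-trip isomorphism $W\cong\overbar{W}$ --- is exactly the point you flag as ``to be extracted by hand'' and then do not carry out; with non-canonical ordinal codes and a nontrivial equivalence relation this is not routine, since $W$ must definably associate each of its elements with an entire equivalence class of codes-of-codes, without collection or recursion.

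The paper's device sidesteps all three issues at once: instead of coding, it defines inside $M=\Union T^{R,F}$ the full \emph{Zermelo tower} $V^{(a)}$ for an infinite transitive $a$ (e.g.\ $a=\omega$), obtained by replacing every hereditary occurrence of $\emptyset$ by $a$. Because every $u^{(a)}$ is infinite, the tower contributes nothing to the $V_n$-growth rate, so $V^{(a)}\of M$; the map $x\mapsto x^{(a)}$ is a canonical, choice-free, injective isomorphism of $\<V,\in>$ onto $\<V^{(a)},\in>$ needing no equivalence relation; $V^{(a)}$ is definable in $M$ via terminal $\in$-descents through $a$; and the round-trip isomorphism $M\cong M^{(a)}$ is $M$-definable because each restriction $i\restrict x$ is itself a set of $M$ (again a growth-rate computation) and is uniquely determined by the rigidity of transitive sets. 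Separately, your reduction for statement~(1) via countable transitive models of \ZF\ oversteps what \ZFC\ can prove; the paper instead runs the whole construction inside a $V_\lambda$ satisfying $\Sigma_2$-collection, which suffices and exists by reflection.
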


Our argument will make fundamental use of the model-construction method of Adrian Mathias \cite{mathias2001slim}. Mathias had used his methods to construct diverse interesting models of Zermelo set theory, some of them quite peculiar, such as a transitive inner model of Zermelo set theory containing all the ordinals, but not having $V_\omega$ as an element. Let us briefly review his method. The central definition is that a class $C$ is \emph{fruitful}, if
\begin{enumerate}
  \item every $x\in C$ is transitive; % $x \in C$ implies $\bigcup x \subseteq x$;
  \item $\Ord\subseteq C$;
  \item $x \in C$ and $y \in C$ implies $x \cup y \in C$;
  \item $x \in C$ and $y \subseteq P(x)$ implies $x \cup y \in C$.
\end{enumerate}
Fruitful classes, Mathias proves, lead to transitive models of Zermelo set theory as follows.

\begin{theorem}[Mathias {\cite[prop.~1.2]{mathias2001slim}}]\label{Theorem.Mathias-fruitful}
  If $C$ is fruitful, then $M=\Union C$ is a supertransitive model of Zermelo set theory with the foundation axiom.
\end{theorem}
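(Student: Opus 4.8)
The plan is to verify each axiom of Zermelo set theory directly in the transitive class $M=\Union C$ from the four clauses defining fruitfulness, the main leverage coming from two preliminary observations: that $C\of M$, and that $M$ is \emph{supertransitive}, meaning transitive and closed under subsets.

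For the first observation, given $x\in C$ note that $\{x\}\of P(x)$ since $x\of x$, so clause~(4) yields $x\union\{x\}\in C$, and then $x\in x\union\{x\}$ witnesses $x\in\Union C=M$; hence $C\of M$. For supertransitivity, $M$ is transitive because it is a union of transitive sets by clause~(1); and if $b\of a\in M$, then $a\in x$ for some $x\in C$, so $a\of x$ (as $x$ is transitive) and therefore $b\of x$, i.e.\ $b\in P(x)$, whence clause~(4) applied with $y=\{b\}$ gives $x\union\{b\}\in C$, so $b\in x\union\{b\}\in C$ and thus $b\in M$. So every subset of an element of $M$ lies in $M$.

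With these in hand I would run through the axioms. Extensionality and Foundation hold because $M$ is a transitive subclass of the (well-founded) background universe, and both are downward absolute to such classes. Clause~(2) handles Empty Set and Infinity: since $1\in\Ord\of C$ we get $\emptyset\in 1\in C$, so $\emptyset\in M$; and since $\omega\in\Ord\of C$ we get $\omega\in M$, which is inductive in $M$ by transitivity. Pairing: given $a\in x$ and $b\in y$ with $x,y\in C$, clause~(3) gives $x\union y\in C\of M$, and since $\{a,b\}\of x\union y$, supertransitivity puts $\{a,b\}\in M$. Union: if $a\in x\in C$ then transitivity of $x$ gives $\Union a\of x\in M$, so $\Union a\in M$ by supertransitivity. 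Separation comes essentially for free: for $a\in M$ and a formula $\varphi$ with parameters in $M$, the class $\{z\in a\mid M\satisfies\varphi(z,\bar p)\}$ is a subset of $a\in M$ — it exists as a set by Separation in the background theory — and hence belongs to $M$ by supertransitivity.

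The step I expect to require the most care, and the single place where clause~(4) is used at full strength, is Power Set. Given $a\in M$, fix $x\in C$ with $a\in x$; then $a\of x$, so every subset of $a$ is a subset of $x$ and hence, by supertransitivity, lies in $M$ — in particular $M$'s power set of $a$ is the genuine power set $P(a)$. To see $P(a)\in M$, apply clause~(4) with $y=P(x)$ (legitimately, since $P(x)\of P(x)$) to obtain $x\union P(x)\in C\of M$; then $P(a)\of P(x)\of x\union P(x)$, so supertransitivity delivers $P(a)\in M$. Assembling these verifications, together with the supertransitivity established in the second paragraph, shows that $M=\Union C$ is a supertransitive model of Zermelo set theory with the foundation axiom.
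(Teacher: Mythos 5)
Your proof is correct, and it is essentially the standard direct verification of Mathias's Proposition~1.2, which the paper imports by citation rather than reproving: the two preliminary observations ($C\of M$ and supertransitivity, both via clause~(4)) followed by an axiom-by-axiom check, with clause~(4) doing the real work for Power Set, is exactly how this goes. The only point worth sharpening is Foundation, which the paper takes in the form of the $\in$-induction scheme, equivalent over Z to transitive containment; that form follows most cleanly from your first observation, since every $a\in M$ satisfies $a\in x$ for some transitive $x\in C\of M$.
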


A transitive set $M$ is \emph{supertransitive}, if it contains every subset of its elements, that is, if $x\of y\in M\implies x\in M$. In the context of Zermelo set theory, we take the foundation axiom in the form of the $\in$-induction scheme, which is equivalent over Z to the ``transitive containment'' assertion that every set is an element of a transitive set.

The importance of the theorem is that Mathias explains how to construct a great variety of fruitful classes $T^{Q,G}$, defined in terms of a  function $Q:\omega\to V_\omega$ and a family $G$ of functions from $\omega$ to $\omega$, which specify rates of growth for the allowed sets.  Namely, a transitive $x$ is in $T^{Q,G}$ just in case its rate-of-growth function
 $$f_x^Q(n)=|x\intersect Q(n)|$$
is in $G$. Under general hypotheses, he proves, $T^{Q,G}$ is fruitful.

For our application, it will suffice to consider one of his key examples, described in \cite[\S3]{mathias2001slim}. Namely, we consider the case of $T^{R,F}$, where $R(n)=V_n$ is the rank-initial segment function and $F$ is the collection of functions $f$ bounded by one of the superexponential functions $b_k$ of the form
\def\rddots#1{\cdot^{\cdot^{\cdot^{#1}}}}
$$b_k:n\mapsto 2^{2^{\rddots {2^n}}}\raise3pt\hbox{$\Bigr\}{\tiny k}$}$$
for some $k\in\omega$. Mathias proves in this instance that $T^{R,F}$ is fruitful, and therefore by theorem \ref{Theorem.Mathias-fruitful} the class $M=\Union T^{R,F}$ is a supertransitive model of Zermelo set theory. The elements of $M$ are precisely those sets $x$ whose transitive closure has a rate of growth in $V_n$ bounded by some $b_k$.
 $$x\in M\qquad\Iff\qquad \exists k\ \forall n\quad |\TC(\singleton{x})\intersect V_n|\ \leq\  b_k(n).$$
Since each $b_k$ has a fixed superexponential depth $k$, it follows that each of them has strictly slower asymptotic growth than the full tetration function
 $$n\mapsto |V_n|=2^{2^{\rddots 2}}\raise3pt\hbox{$\Bigr\}{\tiny n-1}$},$$
and therefore the set $V_\omega$ is not in $M$ because it grows too quickly, \cite[thm.~3.8]{mathias2001slim}.

In order to prove the claims of theorem \ref{Theorem.Zermelo-not-solid-not-tight}, we shall show that the original \ZF\ model $\<V,\in>$ is bi-interpretable with the Zermelo model $\<M,\in>$ defined by $M=\Union T^{R,F}$ above. We already have one direction of the interpretation, since we have defined $M$ as a transitive inner model inside $V$. The difficult part will be conversely to provide an interpretation of $\<V,\in>$ inside $\<M,\in>$. For this, we shall make use of the \emph{Zermelo tower} hierarchy, defined for any set $a$ in $V$ as follows:
\begin{enumerate}
  \item $V^{(a)}_0 = \emptyset$;
  \item $V^{(a)}_{\alpha + 1} =
  \{a\} \cup \bigl(P(V^{(a)}_{\alpha}) - \{\emptyset\}\bigr)$, for successor ordinals;
  \item $V^{(a)}_{\lambda} = \bigcup\limits_{\alpha<\lambda} V^{(a)}_\alpha$, for limit ordinals $\lambda$.
\end{enumerate}
Mathias had introduced the Zermelo tower idea in \cite[\S4]{mathias2001slim}, but defined and considered it only at finite stages and $\omega$, whereas we continue the construction, crucially, through all the ordinals. Our reason for doing so is that the full Zermelo tower $V^{(a)}$ is a definable copy of the original universe, as we prove here:

\begin{lemma}
 For any set $a$, the universe $\<V,\in>$ is definably isomorphic to the full Zermelo tower $\<V^{(a)},\in>$.
\end{lemma}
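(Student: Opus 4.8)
The plan is to write down the isomorphism explicitly. Define a class function $\pi\colon V\to V^{(a)}$ by $\in$-recursion, putting $\pi(\emptyset)=a$ and $\pi(x)=\set{\pi(z)\mid z\in x}$ for $x\neq\emptyset$. This $\pi$ is definable from the parameter $a$, and the claim will be that $\pi$ is an isomorphism of $\<V,\in>$ onto $\<V^{(a)},\in>$: intuitively $\pi$ \emph{relabels} the empty set as $a$ wherever it occurs in a transitive closure and rebuilds every set above that new base point. If $a=\emptyset$ then $\set{a}\union\bigl(P(V^{(a)}_\alpha)\minus\set{\emptyset}\bigr)=P(V^{(a)}_\alpha)$, so $V^{(a)}$ is just the usual cumulative hierarchy and $\pi$ is the identity; so I may assume $a\neq\emptyset$ and write $\rho=\rank(a)>0$.

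The first real step is a piece of rank bookkeeping that drives everything else. By induction on $\beta$ one checks that every element of $V^{(a)}_\beta$, and hence of $V^{(a)}$, has rank at least $\rho$: at a successor stage $a$ contributes rank $\rho$, and a nonempty subset of $V^{(a)}_\beta$ has a member of rank $\geq\rho$ and so has rank $>\rho$. Two consequences will be used repeatedly: first, $\emptyset\notin V^{(a)}$, since $\rho>0$; and second---this is the crucial point---$a\intersect V^{(a)}=\emptyset$, since any $u\in a$ has $\rank(u)<\rho$ and so cannot lie in $V^{(a)}$. A companion $\in$-recursion shows $\rank(\pi(x))\geq\rho$ for every $x$, with $\rank(\pi(x))>\rho$ exactly when $x\neq\emptyset$; in particular $\pi(y)=a$ only when $y=\emptyset$.

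Next I would establish the three properties that make $\pi$ an isomorphism, in this order. That $\pi$ preserves $\in$ is immediate from the recursion. Injectivity is proved by $\in$-induction on $x$: if $x=\emptyset$ then $\pi(y)=a$ forces $y=\emptyset$ by the rank fact, while if $x\neq\emptyset$ then $\pi(x)\neq a$, so $y\neq\emptyset$ as well, and comparing $\set{\pi(z)\mid z\in x}$ with $\set{\pi(w)\mid w\in y}$ and invoking the inductive hypothesis shows $x$ and $y$ have the same members. For the reverse direction, that $\pi(x)\in\pi(y)$ entails $x\in y$: when $y\neq\emptyset$ this is immediate since $\pi(y)=\set{\pi(z)\mid z\in y}$ and $\pi$ is injective, and the case $y=\emptyset$ cannot arise, since then $\pi(y)=a$, which is disjoint from $V^{(a)}$ and hence from $\ran(\pi)\ni\pi(x)$. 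Finally, $\pi$ maps $V$ onto $V^{(a)}$: the inclusion $\ran(\pi)\of V^{(a)}$ follows by $\in$-induction, using the replacement axiom to gather the images $\pi(z)$ of the members of a set $x$ into a single stage $V^{(a)}_\beta$ (the stages being increasing), so that $\pi(x)\in V^{(a)}_{\beta+1}$; and $V^{(a)}\of\ran(\pi)$ follows by induction on the stages, since $a=\pi(\emptyset)$ and any nonempty $u\of V^{(a)}_\beta\of\ran(\pi)$ equals $\pi(w)$, where $w$ is the set of $\pi$-preimages of the members of $u$. Assembling these, $\pi$ is a definable bijection of $V$ onto $V^{(a)}$ carrying $\in$ to $\in$ in both directions, which is exactly the asserted definable isomorphism.

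The delicate point---and the one I expect would cause trouble if handled carelessly---is the interaction of the relabelled base point $a$ with the copies of sets built above it: one must rule out that $a$, or one of its members, accidentally coincides with $\pi(y)$ for some nonempty $y$, which would break either injectivity or the $\in$-reflection step. The rank computation in the second paragraph is precisely what forecloses this, since it pins $a$ at the bottom of $V^{(a)}$ and shows $a$ shares no members with $V^{(a)}$.
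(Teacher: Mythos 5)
Your proof is correct and uses exactly the map the paper uses (the operation hereditarily replacing $\emptyset$ by $a$); the paper dismisses the verification as ``a simple argument by transfinite induction,'' and your rank bookkeeping --- pinning every element of $V^{(a)}$ at rank at least $\rank(a)$ so that $a$ cannot collide with $\pi(y)$ for nonempty $y$ --- is a correct and careful way of carrying out precisely the induction the paper has in mind.
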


\begin{proof}
The isomorphism from $V$ to $V^{(a)}$ is simply the operation replacing every hereditary instance of $\emptyset$ in a set with $a$. Specifically, we define the replacing operation $x\mapsto x^{(a)}$ as follows:
\begin{align*}
\emptyset^{(a)} &= a.\\
x^{(a)} &= \{y^{(a)} \mid y\in x\}
\end{align*}
A simple argument by transfinite induction now shows that this is an isomorphism of every $V_\alpha$ with $V^{(a)}_\alpha$, which establishes the theorem.
\end{proof}

Next, we show that for suitable sets $a$, the Zermelo tower is contained within the Zermelo universe $M$ we defined above.

\begin{lemma}\label{Lemma.V^(a)-subset-M}
If $a$ is any infinite transitive set in $M$, then $V^{(a)}\of M$.
\end{lemma}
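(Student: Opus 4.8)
The plan is to show $V^{(a)}\of M$ by exhibiting, for each $x\in V^{(a)}$, a transitive member of $T^{R,F}$ containing it; the transitive closure $\TC(\singleton x)$ will serve. The intuition is that the sets the Zermelo tower over an infinite transitive base $a$ throws in are ``tall but thin'': each of them has rank at least $\omega$, so the only finite-rank material occurring in its transitive closure is material already present in $a$, and $a$ itself is thin enough because $a\in M$. Two preliminary facts set this up. First, since $a$ is infinite and transitive, $\rank(a)\ge\omega$, because a transitive set of rank below $\omega$ is a subset of some $V_n$ and hence finite. Second, since $a\in M=\Union T^{R,F}$, the set $a$ is a member of some transitive $y\in T^{R,F}$, whence $a\of y$ and $|a\intersect V_n|\le|y\intersect V_n|\le b_k(n)$ for all $n$, where $b_k$ is whichever superexponential function witnesses $y\in T^{R,F}$; so the rate-of-growth function of $a$ is itself bounded by $b_k$.

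The heart of the argument is an induction along the Zermelo tower recursion proving, simultaneously for every ordinal $\alpha$, that \emph{(i)} every $x\in V^{(a)}_\alpha$ has rank at least $\omega$, and \emph{(ii)} every $x\in V^{(a)}_\alpha$ satisfies $\TC(\singleton x)\of V^{(a)}\cup a$. The first nontrivial stage is $V^{(a)}_1=\singleton a$, since $P(\emptyset)-\singleton\emptyset=\emptyset$; here \emph{(i)} is just $\rank(a)\ge\omega$, and \emph{(ii)} holds because $\TC(\singleton a)=\singleton a\cup a$, using transitivity of $a$. At a successor step a member of $V^{(a)}_{\alpha+1}$ is either $a$ itself, handled as before, or a nonempty subset $x$ of $V^{(a)}_\alpha$; in that case $x$ has a member of rank at least $\omega$ by hypothesis \emph{(i)}, so $\rank(x)>\omega$, and $\TC(\singleton x)=\singleton x\cup\bigcup_{y\in x}\TC(\singleton y)\of V^{(a)}\cup a$ by hypothesis \emph{(ii)} together with $x\in V^{(a)}$. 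Limit stages are unions and pose no difficulty.

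To finish, combine the two conclusions: by \emph{(i)} the class $V^{(a)}$ is disjoint from $V_n$ for every finite $n$, so \emph{(ii)} gives $\TC(\singleton x)\intersect V_n\of a\intersect V_n$ for every $x\in V^{(a)}$ and every $n<\omega$. Hence $\TC(\singleton x)$ is a transitive set whose rate-of-growth function $n\mapsto|\TC(\singleton x)\intersect V_n|$ is pointwise dominated by that of $a$ and therefore bounded by $b_k$, so $\TC(\singleton x)\in T^{R,F}$; since $x\in\TC(\singleton x)$ we get $x\in\Union T^{R,F}=M$, and thus $V^{(a)}\of M$. I do not expect a genuine obstacle here. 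The only points requiring care are the bookkeeping in the simultaneous induction and the observation that every set the tower introduces sits at rank at least $\omega$ --- and this is exactly where both hypotheses on $a$ are used: transitivity keeps $\TC(a)=a$ from contributing spurious low-rank sets, and infinitude forces the rank up to at least $\omega$.
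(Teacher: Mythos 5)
Your proof is correct and follows essentially the same route as the paper's: the key point in both is that every set the Zermelo tower introduces fails to lie in any $V_n$ (you argue via rank $\geq\omega$, the paper via non-hereditary-finiteness), so the rate-of-growth function of any relevant transitive set is controlled by that of $a$ alone. The only difference is bookkeeping --- you verify the growth bound for each $\TC(\singleton{x})$ via a simultaneous induction, whereas the paper applies it once to the single transitive set $a\union V^{(a)}_\alpha$ --- and this changes nothing of substance.
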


\begin{proof}
Suppose that $a$ is an infinite transitive set in $M$. It therefore obeys the growth rate requirement, and so there is some $k$ for which $|a\intersect V_n|\leq b_k(n)$ for every $n$. Notice that $a\union V_\alpha^{(a)}$ is a transitive set, and since (i) $u^{(a)}$ is infinite for every set $u$, and (ii) every element of $V_n$ is hereditarily finite, it follows that
$$(a\union V_\alpha^{(a)})\intersect V_n=a\intersect V_n.$$
Consequently, $a\union V_\alpha^{(a)}$ obeys the growth-rate requirement, and consequently every $V^{(a)}_\alpha$ is in $M$. So $V^{(a)}\of M$.
\end{proof}

\begin{lemma}
For any set $a\in M$, the class $V^{(a)}$ is definable in $M$ from parameter~$a$.
\end{lemma}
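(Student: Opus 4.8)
The plan is to define $V^{(a)}$ inside $M$ by quantifying over \emph{set-length approximations} to the Zermelo tower — the standard device for implementing a transfinite recursion in a theory, like Zermelo set theory, that does not prove the replacement axiom. Concretely, I would let $x\in V^{(a)}$ hold in $M$ exactly when there is an ordinal $\gamma$ and a function $t$ with domain $\gamma+1$ such that $t(0)=\emptyset$, $t(\beta+1)=\{a\}\cup\bigl(P(t(\beta))\setminus\{\emptyset\}\bigr)$ whenever $\beta+1\leq\gamma$, $t(\lambda)=\bigcup_{\beta<\lambda}t(\beta)$ for limit $\lambda\leq\gamma$, and $x\in t(\gamma)$. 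This is a first-order formula with the single parameter $a$, and it is meaningful in $M$ since $M$ satisfies Zermelo set theory (so it has the power set and union axioms) and contains every ordinal.

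Two things then need checking. First, absoluteness: if $t\in M$ satisfies the displayed recursion \emph{as $M$ computes it}, then, since $M$ is supertransitive by theorem~\ref{Theorem.Mathias-fruitful}, $M$ computes power sets, unions, functions, and ordinals exactly as $V$ does, so $t$ really is the genuine partial tower $\langle V^{(a)}_\beta\mid\beta\leq\gamma\rangle$ and $x\in t(\gamma)=V^{(a)}_\gamma\of V^{(a)}$; this gives one inclusion. Second, and this is the substantive point, I must show that whenever $x\in V^{(a)}$, say $x\in V^{(a)}_\gamma$, the sequence $t=\langle V^{(a)}_\beta\mid\beta\leq\gamma\rangle$ is itself an element of $M$, so that $M$ genuinely witnesses the existential quantifier. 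Here I would reuse the growth-rate idea from the proof of lemma~\ref{Lemma.V^(a)-subset-M}: when $a$ is infinite (the case relevant to the application, and the one in which $V^{(a)}\of M$), every member of $V^{(a)}_\beta$ for $\beta\geq 1$ is inflated by $a$ and hence lies outside $V_n$ for every $n$, so $\TC(\{t\})\intersect V_n$ reduces to $\TC(a)\intersect V_n$ together with a contribution of size $O(n)$ coming from the indexing ordinals and the Kuratowski pairs $\langle\beta,V^{(a)}_\beta\rangle$; thus $|\TC(\{t\})\intersect V_n|\leq|\TC(a)\intersect V_n|+2n+2$, which is bounded by $b_{k+1}$ once $\TC(a)$ is bounded by $b_k$. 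Hence $t$ satisfies the defining growth condition of $M=\Union T^{R,F}$, so $t\in M$.

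The main obstacle is precisely this last step: inside a model $M$ of mere Zermelo set theory we are trying to define a \emph{proper-class-sized} copy $V^{(a)}$ of the whole universe, even though $M$ cannot form the level function $\beta\mapsto V^{(a)}_\beta$ by recursion; what rescues the argument is that the bounded, set-sized initial segments of that recursion nonetheless live in $M$, and this is exactly what the superexponential growth bounds $b_k$ built into the Mathias construction secure. With the two items above in hand, the displayed formula defines $V^{(a)}$ in $M$ from the parameter $a$, as required.
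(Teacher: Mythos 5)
Your proof is correct, but it takes a genuinely different route from the paper's. The paper explicitly flags the transfinite-recursion approach you adopt, notes that it does succeed here because the recursion is independently known to have a solution in $M$, but then sidesteps it entirely in favor of a recursion-free characterization: $x\in V^{(a)}$ if and only if every terminal $\in$-descent from $x$ down to $\emptyset$ passes through $a$. That definition needs no ordinals, no approximating sequences, and no growth-rate estimates --- just an $\in$-induction in each direction --- so it is shorter and does not depend on the particular Mathias bounds $b_k$. Your version implements the recursion by set-length approximations $t=\langle V^{(a)}_\beta\mid\beta\le\gamma\rangle$ and then does the real work of checking that these approximations are elements of $M$; your growth-rate computation is sound (for $\beta\ge 1$ every member of $V^{(a)}_\beta$ has the infinite set $a$ in its transitive closure and so contributes nothing to $\TC(\set{t})\intersect V_n$, leaving only $a$'s own trace plus $O(n)$ bookkeeping from the finite ordinals and their Kuratowski pairs), and the absoluteness step is unproblematic because $M$ is supertransitive and contains all ordinals. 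What your approach buys is generality of method --- it is the standard template for recovering a transfinite recursion inside a theory without replacement, and it makes explicit exactly which feature of the Mathias construction (the closure of the growth bounds under one more exponential) rescues the argument; what the paper's approach buys is brevity and independence from those bounds. One small caveat you already acknowledge: your argument needs $a$ infinite (and, for the application, transitive), which is precisely the hypothesis of lemma \ref{Lemma.V^(a)-subset-M} guaranteeing $V^{(a)}\of M$ in the first place, so nothing is lost in the intended use.
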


\begin{proof}
One might naturally want to define $V^{(a)}$ in $M$ by transfinite recursion, as we did in defining the hierarchy $V^{(a)}_\alpha$ above. The problem with this is that $M$ is a model merely of Zermelo set theory, in which such definitions by transfinite recursion do not necessarily succeed. Nevertheless, in instances as here where we know independently that the recursion does have a solution in $M$, the recursive definition does in fact succeed in defining it.

But instead of that definition, let us consider another simple definition. Define that a \emph{terminal $\in$-descent} from a set $x$ is a finite $\in$-descending sequence of sets from $x$ to $\emptyset$
 $$x=x_n\ni x_{n-1}\ni\cdots\ni x_1\ni x_0=\emptyset.$$
We claim that $V^{(a)}$ consists in $M$ precisely of those sets $x$ for which every terminal $\in$-descent passes through the set $a$, in the sense that $a=x_i$ for some $i$. Certainly every $x\in V^{(a)}$ is like this, since if $y\in x\in V^{(a)}_{\alpha+1}$, then either $x=a$ or $y\in V^{(a)}_\alpha$, and by induction the terminal descents from $y$ will pass through $a$. Conversely, assume that every terminal $\in$-descent from $x$ passes through $a$. So either $x=a$, in which case it is in $V^{(a)}$, or else the assumption about terminal descents is also true of the elements of $x$. By $\in$-induction, therefore, the elements of $x$ are all in $V^{(a)}$, and this implies that $x$ is in $V^{(a)}_\gamma$ at an ordinal stage $\gamma$ beyond the stages where the elements of $x$ are placed into $V^{(a)}$.
\end{proof}

If we take $a=\omega$, or some other definable infinite transitive set in $M$, then we don't need $a$ as a parameter, and so we've proved that $\<V,\in>$ and $\<M,\in>$ are at least mutually interpretable; each has a definable copy in the other.

\begin{lemma}
 The mutual interpretation of $\<V,\in>$ and $\<M,\in>$ is a bi-interpretation.
\end{lemma}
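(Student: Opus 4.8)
The plan is to check the two definability clauses in the definition of bi-interpretation. Write $J$ for the interpretation of the Zermelo model $M$ inside $\langle V,\in\rangle$, which is simply the identity on the definable class $M=\Union T^{R,F}$, and write $I$ for the interpretation of $\langle V,\in\rangle$ inside $M$, which sends the universe to the full Zermelo tower $\langle V^{(a)},\in\rangle$ by the replacing isomorphism $\iota\colon x\mapsto x^{(a)}$ of the preceding lemmas. I would fix $a=\omega$ throughout, a definable infinite transitive set of $M$, so that no parameters are needed in $I$. We must produce (i) a $V$-definable isomorphism of $\langle V,\in\rangle$ with $I^{J^V}$, and (ii) an $M$-definable isomorphism of $\langle M,\in\rangle$ with $J^{I^M}$.

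For (i): we have $J^V=M$, so $I^{J^V}=I^M$ is the Zermelo tower as $M$ computes it, namely $(V^{(a)})^M$. The first step is to see that $(V^{(a)})^M=V^{(a)}$. By Lemma~\ref{Lemma.V^(a)-subset-M} we have $V^{(a)}\of M$; and by the lemma characterizing the Zermelo tower, $x\in V^{(a)}$ iff every terminal $\in$-descent from $x$ passes through $a$. Since $\omega\in M$ and $M$ is a supertransitive model of Zermelo, $M$ contains $\TC(\{x\})$ for each of its elements $x$ and hence contains every finite $\in$-descending sequence drawn from $\TC(\{x\})$; so $M$ agrees with $V$ about which $x\in M$ lie in $V^{(a)}$, giving $(V^{(a)})^M=V^{(a)}$. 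As $\iota$ is a $V$-definable isomorphism of $\langle V,\in\rangle$ with $\langle V^{(a)},\in\rangle$ by ordinary transfinite recursion in \ZF, clause (i) is done.

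For (ii): now $I^M=(V^{(a)})^M=V^{(a)}$, which is a definable inner model of \ZF\ inside $M$, isomorphic to $\langle V,\in\rangle$ via $\iota$, and $J^{I^M}=J^{V^{(a)}}$ is the Mathias construction $\Union T^{R,F}$ performed inside $\langle V^{(a)},\in\rangle$. Because that construction is given by $\in$-formulas of \ZF\ and $\iota$ is a definable isomorphism, this class is exactly $\iota[M]=\{\,x^{(a)}\mid x\in M\,\}$, which is contained in $V^{(a)}\of M$; thus $J^{I^M}$ is a definable subclass of $M$ and the natural candidate isomorphism $\langle M,\in\rangle\cong J^{I^M}$ is $\iota$ restricted to $M$. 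That this map is a bijection onto $J^{I^M}$ which preserves and reflects $\in$ follows at once from the recursive clause $x^{(a)}=\{\,y^{(a)}\mid y\in x\,\}$ together with $\in$-induction. The one genuine obstacle is that $M$ models only Zermelo set theory, so it is not immediate that $\iota\restriction M$ is $M$-definable; I would handle this exactly as in the lemma showing $V^{(a)}$ is definable in $M$. Let $\varphi(x,y)$ say that there is a function $h$ whose domain is transitive and contains $x$ and $\emptyset$, with $h(\emptyset)=a$ and $h(z)=\{\,h(w)\mid w\in z\,\}$ for every nonempty $z\in\dom h$, and with $h(x)=y$. Uniqueness of such $y$ is a routine $\in$-induction within $\dom h$, so it remains to check that for each $x\in M$ a witnessing $h$ exists in $M$. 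Take $\dom h=\TC(\{x\})$, a set of $M$ by transitive containment; since $x^{(a)}\in V^{(a)}\of M$, also $\TC(\{x^{(a)}\})$ is a set of $M$, and the intended $h=\{\,(z,z^{(a)})\mid z\in\TC(\{x\})\,\}$ is a subset of $\TC(\{x\})\times\bigl(\TC(\{x^{(a)}\})\cup\{x^{(a)}\}\bigr)$, which is a set of $M$ since Zermelo forms Cartesian products; hence $h\in M$ by supertransitivity. Therefore $\varphi$ defines in $M$ the class function $x\mapsto x^{(a)}$, which by the previous paragraph is the required $M$-definable isomorphism of $\langle M,\in\rangle$ with $J^{I^M}$. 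With both clauses verified, the mutual interpretation is a bi-interpretation.
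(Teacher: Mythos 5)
Your proof is correct and follows the same overall architecture as the paper's: the same two interpretations, the same identification of the composite classes as $V^{(a)}$ (inside $M$, via the terminal-descent formula) and $M^{(a)}=\iota[M]$ (inside $V^{(a)}$), and the same recognition that the only delicate point is the $M$-definability of the map $x\mapsto x^{(a)}$, since $M$ satisfies only Zermelo set theory and cannot in general carry out definitions by transfinite recursion. Where you genuinely diverge is in how you certify that the partial collapsing functions are sets of $M$, and in how you then turn them into a definition. The paper shows that $i\restrict x=\{(y,y^{(a)})\mid y\in x\}$ lies in $M$ by checking that its transitive closure obeys the Mathias growth-rate requirement (the sets $y^{(a)}$ contribute nothing new to any $V_n$), and then defines the map on transitive sets by exploiting rigidity: $i\restrict x$ is the unique isomorphism of $\langle x,\in\rangle$ with a set that $M^{(a)}$ regards as transitive. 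You instead observe that the witnessing function $h$ with domain $\TC(\{x\})$ is a subset of $\TC(\{x\})\times\TC(\{x^{(a)}\})$, both factors being sets of $M$ (by transitive containment, supertransitivity, and the earlier lemma placing $V^{(a)}$ inside $M$), so that $h\in M$ by supertransitivity alone; and you define the map directly by the recursion equations $h(\emptyset)=a$ and $h(z)=\{h(w)\mid w\in z\}$, with uniqueness by the $\in$-induction scheme available in $M$. Your route uses only the supertransitivity conclusion of Mathias's theorem as a black box rather than re-entering the growth-rate computation, which makes it a bit more robust (it would survive any fruitful-class construction, not just $T^{R,F}$); and your explicit verification that $(V^{(a)})^M=V^{(a)}$, via absoluteness of the terminal-descent characterization for supertransitive $M$, is a point the paper passes over silently. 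Both arguments are sound.
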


\begin{proof}
We've seen already that $M$ is a definable transitive inner model of $V$, and $\<V,\in>$ is definably isomorphic to its copy $\<V^{(a)},\in>$ in $M$, isomorphic by the map $x\mapsto x^{(a)}$, which is definable in $V$, assuming as we have that we use $a=\omega$ or some other definable infinite transitive set in $M$.

Conversely, we've seen that $M$ can define the model $\<V^{(a)},\in>$. Inside that model, we define the copy $M^{(a)}$ of $M$, just as $M$ is definable in $V$. For this to be a bi-interpretation, we need to show that the isomorphism of $M$ with $M^{(a)}$ is definable in $M$. This isomorphism is precisely the function $i:x\mapsto x^{(a)}$, applied to $x\in M$. The problem again, however, is that $M$ is not able in general to undertake recursive definitions, since it satisfies only Zermelo set theory, which does not prove that recursive definitions have solutions. Nevertheless, we claim that $i\restrict x\in M$ for any $x\in M$. The reason is that the transitive closure of $i\restrict x=\set{(y,y^{(a)})\mid y\in x}$ can be seen to obey the growth-rate requirement, as $x$ does with its transitive closure and the sets $y^{(a)}$ are either $a$ or have no elements in any $V_n$, as in lemma \ref{Lemma.V^(a)-subset-M} above. Thus, $M$ can see the isomorphism $i\restrict x$ of $\<x,\in>$ with $\<x^{(a)},\in>$ for any set $x\in M$. If $x$ is transitive, then since transitive sets are rigid, this isomorphism is unique, and since different transitive sets are never isomorphic, $\<x,\in>$ will not be isomorphic to any $\<y,\in>$ that $M^{(a)}$ thinks is transitive. So $M$ can define $i\restrict x$ for transitive sets $x$ by the assertions that it is an isomorphism of $\<x,\in>$ with a set $\<y,\in>$ for a set $y$ that $M^{(a)}$ thinks is transitive. These maps all agree with and union up to the full isomorphism $x\mapsto x^{(a)}$ of $M$ with $M^{(a)}$, which is therefore definable in $M$. So we have a bi-interpretation of $\<V,\in>$ with $\<M,\in>$.
\end{proof}

Let us now put all this together and explain how we have proved theorem \ref{Theorem.Zermelo-not-solid-not-tight}.

\begin{proof}[Proof of theorem \ref{Theorem.Zermelo-not-solid-not-tight}]
The argument we have given shows that every model $V\satisfies\ZF$ is bi-interpretable with a transitive inner model $M$ of Zermelo set theory, which is not a model of \ZF, because it does not even have $V_\omega$ as an element. This establishes statement (2) of theorem \ref{Theorem.Zermelo-not-solid-not-tight}.

For statement (1), we can perform the construction inside a well-founded model of some sufficient fragment of \ZF. For example, let us undertake the entire construction inside some $V_\lambda$, satisfying $\Sigma_2$-collection, say. We thereby get a transitive model $M_\lambda\of V_\lambda$ of Zermelo set theory, without $V_\omega$, but with $\<V_\lambda,\in>$ and $\<M_\lambda,\in>$ bi-interpretable. So there are well-founded models of Zermelo set theory that are bi-interpretable, but not isomorphic.

Finally, let us consider statement (3). We consider the two extensions of Zermelo set theory describing the situation that enabled us to make the main example for statement (2). That is, the first theory is \ZF\ itself; and the second theory, which we denote ZM, asserts Zermelo set theory Z, plus the assertion that the class $V^{(\omega)}$, defined as in our main argument, is a model of \ZF, and the assertion that $M$ is isomorphic to $M^{(\omega)}$ inside that class by the isomorphism definition we provided. These two theories are different, but the argument that we gave above for the models shows that they are bi-interpretable.
\end{proof}

Let us close this section by mentioning that the construction is quite malleable with respect to the instance of replacement we want to eliminate. Alternative versions of the construction will construct supertransitive inner models $M$ that satisfy Zermelo set theory, and which have $V_\alpha$ as a set for every ordinal $\alpha<\lambda$, but do not have $V_\lambda$ as a set.

\begin{theorem}\label{Theorem.Zermelo-with-V_alphas}
For every limit ordinal $\lambda$, the universe $\<V,\in>$ is bi-interpretable with a transitive inner model $\<M,\in>$ satisfying Zermelo set theory with foundation, such that $V_\alpha\in M$ for every $\alpha<\lambda$, but $V_\lambda\notin M$.
\end{theorem}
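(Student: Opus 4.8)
The plan is to adapt the Mathias-style construction used for theorem~\ref{Theorem.Zermelo-not-solid-not-tight}, replacing the superexponential family $\{b_k\mid k<\omega\}$ by a family of growth bounds calibrated to $\lambda$. For a transitive set $x$ write $f_x(\gamma)=|\TC(\singleton x)\intersect V_\gamma|$ for its rate of growth measured against the cumulative hierarchy. For $\alpha<\lambda$ and $k<\omega$ put
$$B_{\alpha,k}(\gamma)=\max\bigl(\,|V_{\min(\gamma,\alpha)+k}|\,,\ \beth_k(|\gamma|+1)\,\bigr),$$
and let $C$ be the class of transitive sets $x$ for which $f_x(\gamma)\le B_{\alpha,k}(\gamma)$ for all $\gamma$, for some $\alpha<\lambda$, $k<\omega$. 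The first term of $B_{\alpha,k}$ permits sets that are as fat as $V$ up through rank $\alpha$ (so that the various $V_\alpha$ enter $M$), the second permits thin-but-tall sets (so that all the ordinals enter $M$), and the finite parameter $k$ supplies the slack needed to absorb one further power set operation. For $\lambda=\omega$ this essentially recovers the original construction (with the tower built over $a=\omega$).

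First I would check that $C$ is fruitful, so that by Mathias's theorem~\ref{Theorem.Mathias-fruitful} the class $M=\Union C$ is a supertransitive model of Zermelo set theory with foundation. Clauses (1)--(3) are routine: elements of $C$ are transitive by fiat; $\Ord\subseteq C$ since $f_\beta(\gamma)=|\min(\beta+1,\gamma)|\le|\gamma|\le B_{0,0}(\gamma)$; and $\TC(\singleton{x\cup y})\of\singleton{x\cup y}\cup\TC(\singleton x)\cup\TC(\singleton y)$ gives $f_{x\cup y}\le 1+B_{\alpha_1,k_1}+B_{\alpha_2,k_2}\le B_{\max(\alpha_1,\alpha_2),\,\max(k_1,k_2)+1}$. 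For clause (4), if $x\in C$ with witness $(\alpha,k)$ and $y\of P(x)$, then $x$ transitive gives $\TC(\singleton{x\cup y})\of\singleton{x\cup y}\cup x\cup y$, and every rank-$\le\eta$ member of $y$ is a subset of $x\intersect V_\eta$, so $|y\intersect V_{\eta+1}|\le 2^{|x\intersect V_\eta|}\le 2^{f_x(\eta)}\le 2^{B_{\alpha,k}(\eta)}$; a short cardinal computation then yields $f_{x\cup y}\le B_{\alpha,k'}$ for some finite $k'$, and $\alpha+k'<\lambda$ keeps $x\cup y$ in $C$.

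Next I would verify the three target properties of $M$. We get $\Ord\subseteq M$ from $\Ord\subseteq C$, and $V_\alpha\in M$ for every $\alpha<\lambda$ since $f_{V_\alpha}(\gamma)\le|V_{\min(\gamma,\alpha)+1}|\le B_{\alpha,1}(\gamma)$, so $V_\alpha\in C$. For $V_\lambda\notin M$: if $V_\lambda\in y$ for some $y\in C$, then $y$ transitive gives $V_\lambda\of y$, hence $f_y(\gamma)\ge|V_\gamma|$ for all $\gamma<\lambda$; but a witness $(\alpha,k)$ for $y$ forces $|V_\gamma|\le B_{\alpha,k}(\gamma)$ for all such $\gamma$, which fails at $\gamma$ chosen sufficiently far above $\alpha+k$, since there $|V_\gamma|=2^{|V_\delta|}$ for $\delta<\gamma$ outstrips both $|V_{\alpha+k}|$ and $\beth_k(|\gamma|+1)$ by Cantor. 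This last step is where the argument is genuinely delicate, and I expect it to be the main obstacle: the comparison of $\beth_k(|\gamma|)$ with $|V_\gamma|$ degenerates near fixed points of the $\beth$ function, so one must choose $\gamma$ with care and may need a case split according to whether $\beth$-fixed points are cofinal in $\lambda$; the real content is producing a bound family that is simultaneously closed under clause~(4) and, at every $\lambda$, disjoint from the growth of $V_\lambda$.

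Finally the bi-interpretation, which mirrors the main argument. One direction is immediate, as $M$ is a definable inner model of $\<V,\in>$. For the converse take $a=\lambda$ (the ordinal itself), which lies in $M$, and form the Zermelo tower $V^{(\lambda)}$; it is definable in $M$ as the class of sets all of whose terminal $\in$-descents pass through $\lambda$, and $\<V,\in>\iso\<V^{(\lambda)},\in>$ via the definable map $x\mapsto x^{(\lambda)}$. Each $V^{(\lambda)}_\alpha$ lies in $M$ because it is thin below $\lambda$: every member of $\TC(\singleton{V^{(\lambda)}_\alpha})$ of rank $<\lambda$ is an ordinal $<\lambda$, so $f_{V^{(\lambda)}_\alpha}(\gamma)\le|\gamma|\le B_{0,0}(\gamma)$ for $\gamma\le\lambda$. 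And as in the bi-interpretation lemma for theorem~\ref{Theorem.Zermelo-not-solid-not-tight}, for each $x\in M$ the restriction $i\restrict x$ of $x\mapsto x^{(\lambda)}$ again has $\TC(i\restrict x)$ obeying some $B_{\alpha',k'}$ — its rank-$<\lambda$ content differs from that of $\TC(\singleton x)$ only by ordinals $<\lambda$ — so $M$ can see these restrictions and hence define the isomorphism of $M$ with its copy $M^{(\lambda)}$ inside $V^{(\lambda)}$. This exhibits $\<V,\in>$ and $\<M,\in>$ as bi-interpretable, with $M$ a transitive inner model of Zermelo set theory with foundation in which $V_\alpha\in M$ for all $\alpha<\lambda$ but $V_\lambda\notin M$, as desired.
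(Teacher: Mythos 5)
Your proposal is correct and follows the paper's overall strategy---a Mathias fruitful class cut out by growth bounds, followed by the Zermelo tower $V^{(a)}$ for the reverse interpretation and the restriction trick $i\restrict x$ for definability of the isomorphism---but your bound family is genuinely different from the paper's, and in a way that matters. The paper takes $G$ to consist of functions bounded by transfinitely iterated exponentials $b_\beta$, $\beta<\lambda$, whereas you cap the exponential depth at a finite $k$ and compensate with a separate term $|V_{\min(\gamma,\alpha)+k}|$ permitting full fatness up to a fixed rank $\alpha<\lambda$. This decoupling is what makes the exclusion of $V_\lambda$ clean: the ``delicate point'' you flag resolves without any case split on $\beth$-fixed points, since for a putative witness $(\alpha,k)$ you may take any infinite $\gamma_0$ with $\alpha+k<\gamma_0<\lambda$ and set $\gamma=\gamma_0+k+1<\lambda$, whence $|V_\gamma|\geq\beth_{k+1}(|V_{\gamma_0}|)\geq\beth_{k+1}(|\gamma|)>\beth_k(|\gamma|+1)$ and also $|V_\gamma|>|V_{\alpha+k}|$, defeating both terms of $B_{\alpha,k}(\gamma)$ at once. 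Indeed, keeping the iteration depth finite is not merely a stylistic choice: if one instead allows depth-$\beta$ towers for all $\beta<\lambda$, then for $\lambda=\omega\cdot2$ the function $\gamma\mapsto|V_\gamma|$ is bounded on $\lambda$ by the depth-$\omega$ tower (its values for $\gamma<\omega\cdot2$ never exceed $\beth_\omega$), so $V_\lambda$ would slip into the class; your family avoids this. Two small points to tighten. First, in verifying $V^{(\lambda)}_\alpha\in C$ you only check $\gamma\leq\lambda$, where the tower is indeed just ordinals; but since your class constrains $f_x(\gamma)$ at \emph{all} $\gamma$, you must also handle $\gamma>\lambda$, where $\TC(\singleton{V^{(\lambda)}_\alpha})$ contains a shifted copy of $V_\alpha$ of size up to $|V_\alpha|$---the witness $(0,0)$ fails there, but $(\alpha,1)$ works because for $\gamma>\lambda>\alpha$ the first term of $B_{\alpha,1}(\gamma)$ is the constant $|V_{\alpha+1}|$. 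Second, in clause (4) of fruitfulness the bound $|y\intersect V_{\eta+1}|\leq2^{|x\intersect V_\eta|}$ must be assembled at limit ranks $\gamma$, where one picks up a factor of $|\gamma|$ from the union over $\eta<\gamma$; this is absorbed since $|\gamma|\leq\beth_{k+1}(|\gamma|+1)$, but it should be said. With those details supplied, the argument goes through as you describe.
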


\begin{proof}
We use $M=\Union T^{Q,G}$, where $Q:\lambda\to V_\lambda$ is $Q(\alpha)=V_\alpha$, and $G$ consists of the functions $f$ that are bounded by one of the functions $b_\beta$, for some $\beta<\lambda$, defined by
\begin{enumerate}
  \item $b_0(\alpha) = \alpha$;
  \item $b_{\beta+1}(\alpha) = 2^{b_\beta(\alpha)}$;
  \item if $\eta$ is a limit ordinal, then $b_\eta(\alpha) = \sup_{\beta<\eta}b_\beta(\alpha)$.
\end{enumerate}
This is defined with cardinal arithmetic, not ordinal arithmetic. The class $T^{Q,G}$ consists of all transitive sets $x$ whose rate-of-growth functions
$$f_x^Q(\alpha)=|x\intersect V_\alpha|$$
is bounded by one of the functions $b_\beta$, for some $\beta<\lambda$. It follows that $T^{Q,G}$ is a fruitful class, and the resulting Zermelo model $M=\Union T^{Q,G}$ is a transitive inner model of Zermelo set theory with foundation (in the form of the $\in$-induction scheme). Every $V_\alpha$ for $\alpha<\lambda$ is in $M$, with growth rate bounded by $b_\alpha$, but $V_\lambda$ is not in $M$, since this growth rate exceeds any given $b_\beta$ for fixed $\beta<\lambda$. And $\<V,\in>$ is bi-interpretable with $\<M,\in>$ by analogous arguments to the above.
\end{proof}

One can use this idea to produce many non-tight theories extending Zermelo set theory. For example, for any theory true in the model $M$ that we produced in the proof of theorem \ref{Theorem.Zermelo-not-solid-not-tight} will be bi-interpretable with \ZF. For example, that model $M$ satisfies the principle that every definable function from a set to the ordinals is bounded, which can be seen as a weak form of replacement not provable in Z, but true in M precisely because it is a transitive inner model (with all the ordinals) of a model of \ZF. In theorem \ref{Theorem.Zermelo-with-V_alphas} we similarly arranged that $V_\alpha$ exists for all $\alpha$ up to some large definable limit ordinal $\lambda$, which is itself another instance of replacement, and so this gives another extension of Zermelo set theory that is bi-interpretable with ZF.

Let us conclude the paper by distinguishing the theory form of bi-interpretation from a natural model-by-model form of bi-interpretation that might hold for the models of the theory. Specifically, we define that

\begin{definition}\rm
Theories $T_1$ and $T_2$ are \emph{model-by-model} bi-interpretable if every model of one of the theories is bi-interpretable with a model of the other.
\end{definition}
The definition in effect drops the uniformity requirement for bi-interpretability of theories, since it could be a different interpretation that works in one model than in another, with perhaps no uniform definition that works in all models of the theory. This is precisely what we should like now to prove.

\begin{theorem}
 There are theories $T_1$ and $T_2$ that are model-by-model bi-interpretable, but not bi-interpretable.
\end{theorem}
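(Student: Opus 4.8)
The plan is to repackage the $\<H_{\omega_1},\in>$ versus $\<H_{\omega_2},\in>$ bi-interpretation of theorem~\ref{Theorem.Homega1-Homega_2-bi-interpretable} into two theories for which the bi-interpretation available model-by-model cannot be made uniform. Assuming $\Con(\ZFC)$, fix a Solovay--Tennenbaum model $U = L[G]$ of $\MA + \neg\CH$ obtained by c.c.c.\ forcing over $L$, so that the almost-disjoint coding features isolated before theorem~\ref{Theorem.Homega1-Homega_2-bi-interpretable} hold in $U$, witnessed by fixed formulas. Let $T_1$ be the complete theory $\Th(\<H_{\omega_1}^U,\in>)$, an extension of $\ZFCm$ together with the axiom that every set is countable. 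Let $T_2$ be $\ZFCm$ together with the sentence ``there is a largest cardinal $\kappa$, it is uncountable, there are only finitely many infinite cardinals, and every set is hereditarily of size at most $\kappa$''---this sentence is equivalent to saying $V = H_{\omega_{n+1}}$ for some $n\geq 1$, and we call this $n$ the \emph{level} of the model---together with the closeness assertions $\omega_1 = \omega_1^L$ and ``every $A\of\omega_1$ is almost-disjoint coded by a real relative to the $L$-least almost-disjoint family'' that are needed to run the interpretation of theorem~\ref{Theorem.Homega1-Homega_2-bi-interpretable}. For each standard $m\geq 1$ the structure $N_m := \<H_{\omega_{m+1}}^U,\in>$ is a well-founded model of $T_2$ of level exactly the standard integer $m$.

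First I would check that every model of $T_1$ is bi-interpretable with a model of $T_2$, which is one direction of model-by-model bi-interpretability and suffices by the definition. Given $M\models T_1$, apply to $M$ the almost-disjoint coding interpretation $K$ of theorem~\ref{Theorem.Homega1-Homega_2-bi-interpretable}, obtaining the interpreted structure $K^M$. Because $K$ is a fixed interpretation and $M\equiv\<H_{\omega_1}^U,\in>$, we get $K^M\equiv\<H_{\omega_2}^U,\in> = N_1$; hence $K^M$ has level $1$ and satisfies the closeness assertions, so $K^M\models T_2$. The assertion that the composition of $K$ with its dual interpretation is an isomorphism onto the interpreted copy is first-order and true in $\<H_{\omega_1}^U,\in>$, hence belongs to $T_1$; so the pair consisting of $K$ and its dual witnesses that $M$ is bi-interpretable with $K^M$.

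Next I would prove that $T_1$ and $T_2$ are \emph{not} bi-interpretable. Suppose toward a contradiction that $(I,J)$ witness it, with $J$ interpreting $T_2$ in $T_1$ and $I$ interpreting $T_1$ in $T_2$, so that $T_1$ proves the universe is definably isomorphic to the result of interpreting first via $J$ and then via $I$, and symmetrically for $T_2$. For any $M\models T_1$ the structure $J^M$ satisfies $T_2$ and so has a well-defined level $\geq 1$; pulling this back through $J$, the level is defined in $M$ by a fixed formula $\psi_J$, and $T_1$ proves that $\psi_J$ defines a unique interpreted natural number $\geq 1$. Since $T_1$ is complete, for every standard $m$ the sentence asserting that the $\psi_J$-level is the $m$th interpreted natural number is decided by $T_1$, so either (A) there is one standard $m^*$ with $T_1$ proving the $\psi_J$-level is the $m^*$th interpreted natural, or (B) $T_1$ proves the $\psi_J$-level is not the $m$th interpreted natural for any standard $m$. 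Now apply the bi-interpretation in the model $N_m$: the structure $I^{N_m}$ satisfies $T_1$, and $N_m$ is definably isomorphic in $N_m$ to $J^{I^{N_m}}$; since ``the level is the $m$th interpreted natural'' is first-order and hence invariant under isomorphism, the $\psi_J$-level computed in $I^{N_m}$ equals the level of $N_m$, which is the standard integer $m$. Under (A) this forces $m = m^*$ for every $m\geq 1$, which is impossible; under (B) it forces the standard $m$ to be a nonstandard natural number of $I^{N_m}$, again impossible. So no such $(I,J)$ exists, and the theorem follows.

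The crux is this last step, and specifically the observation that a uniform bi-interpretation must translate the level of the $T_2$-side into a parameter-free definable natural number on the $T_1$-side, which over the complete theory $T_1$ is pinned to one standard value or is uniformly nonstandard, whereas $T_2$ realizes every standard level by a well-founded, hence standardly pointed, model. Calibrating exactly this dichotomy, and putting just enough closeness to $L$ into $T_1$ and $T_2$ for the almost-disjoint coding bi-interpretation of theorem~\ref{Theorem.Homega1-Homega_2-bi-interpretable} to be available in all their models, are the points that need care; the model-by-model direction is then a routine bookkeeping consequence of theorem~\ref{Theorem.Homega1-Homega_2-bi-interpretable}.
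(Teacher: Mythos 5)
Your argument for the \emph{negative} half (non-bi-interpretability of the theories) is essentially sound and genuinely different from the paper's: you exploit the completeness of $T_1$ to pin down, uniformly across all models of $T_1$, which ``level'' the interpreted $T_2$-model must have, and then defeat this with well-founded models of $T_2$ of every standard level; the paper instead takes $T_1=\ZF$ and $T_2=\ZF\lor\ZM$ and derives the contradiction from the solidity of $\ZF$ applied to the $\ZF$-models $I^M$ and $I^{I^M}$ arising inside a $\ZM$-model $M$.

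The gap is in the \emph{positive} half. Model-by-model bi-interpretability, as the notion is used here (and as the paper's own proof verifies), is symmetric: every model of $T_1$ must be bi-interpretable with a model of $T_2$ \emph{and} every model of $T_2$ must be bi-interpretable with a model of $T_1$. You assert that checking the first direction ``suffices by the definition,'' and you never address the second. For your $T_2$ this second direction is not merely unproven but appears false: you explicitly arrange that $N_m=\<H_{\omega_{m+1}}^U,\in>$ is a model of $T_2$ for every standard $m\geq 1$, yet for $m\geq 2$ the structure $N_m$ has cardinality $2^{\omega_{m-1}}>2^\omega=\Card{H_{\omega_1}^U}$ in the Solovay--Tennenbaum model, so it cannot be interpreted in $\<H_{\omega_1}^U,\in>$ at all, and there is no reason to expect it to be bi-interpretable with any (necessarily larger, ill-founded or exotic) model of the complete theory $T_1=\Th(\<H_{\omega_1}^U,\in>)$. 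The difficulty is structural: your refutation of uniform bi-interpretability needs $T_2$ to realize unboundedly many levels, while the model-by-model hypothesis needs every such model to be matched with a model of $T_1$, and these two demands pull against each other. The paper sidesteps exactly this tension by taking $T_2$ to be a disjunction $\ZF\lor\ZM$, so that every model of $T_2$ is either already a $\ZF$-model (bi-interpretable with itself) or a $\ZM$-model (bi-interpretable with a $\ZF$-model via the Mathias construction), with the non-uniformity residing in \emph{which} disjunct holds. To repair your approach you would need either to restrict $T_2$ so that its models are all of level $1$ (which destroys your negative argument) or to supply a bi-interpretation of each higher-level model of $T_2$ with some model of $T_1$, which is not available.
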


\begin{proof}
Consider the theories
\begin{enumerate}
  \item $T_1 = \ZF$.
  \item $T_2 = \{\alpha \lor \beta \mid \alpha \in \ZF \land \beta \in \ZM\}$.
  (That is, $T_2 = \ZM \lor \ZF$)
\end{enumerate}
where ZM is the theory used in the proof of theorem \ref{Theorem.Zermelo-not-solid-not-tight}. Observe that $M\satisfies T_2$ if and only if $M\satisfies\ZF$ or $M\satisfies\ZM$. The reverse implication is immediate, and if the latter fails, then $M\not\satisfies\alpha$ and $M\not\satisfies\beta$ for some $\alpha\in\ZF$ and $\beta\in\ZM$, which would mean $M\not\satisfies\alpha\lor\beta$, violating the former.

Now observe simply that every model of \ZF\ is bi-interpretable with itself, of course, and this is a model of $T_2$. And conversely, every model of $T_2$ is either a model of \ZF\ already, or else a model of \ZM, which is bi-interpretable with a model of \ZF. So these two theories are model-by-model bi-interpretable.

Let us now prove that the theories are not bi-interpretable. Suppose toward contradiction that they are, that $(I,J)$ forms a bi-interpretation, so that $I$ defines a model of $T_1$ inside any model of $T_2$ and $J$ defines a model of $T_2$ inside any model of $T_1$, and the theories prove this and furthermore have definable isomorphisms of the original models with their successive double-interpreted copies. Let $M$ be a model of $\ZM$ that is not a model of $\ZF$. Let $I^M$ be the interpreted model of \ZF\ defined inside $M$. This is also a model of \ZM, since $\ZM\of\ZF$, and so we may use $I$ again to define $I^{I^M}$ to get another model of \ZF\ inside that model. Because we interpreted a model of $T_2$ by $I$, it follows that $I^{I^M}$ and $I^M$ are bi-interpretable as models. But these are both models of \ZF, and so by theorem \ref{ali-theorem} they are isomorphic. By interpreting with $J$ to get the corresponding interpreted \ZM\ models, it follows that $J^{I^{I^M}}$ and $J^{I^M}$ are isomorphic. Because $(I,J)$ form a bi-interpretation, the first of these is isomorphic to $I^M$, which is a \ZF\ model, while the second is isomorphic to $M$, which is not. This is a contradiction, and so the theories are not bi-interpretable.
\end{proof}

The essence of the proof was that we had a theory \ZM\ that was not solid, but it was a subtheory of a solid theory \ZF, with which it was bi-interpretable.

\section{Final remarks}

We have investigated the nature of mutual and bi-interpretation in set theory and the models of set theory. There is surely a vibrant mutual interpretation phenomenon in set theory, with numerous instances of mutual interpretation amongst diverse natural extensions of \ZF\ set theory. This mutual-interpretation phenomenon, however, is less than fully robust in several respects. First, theories extending \ZF\ are never bi-interpretable, and indeed, there cannot be even a single nontrivial instance of models of \ZF\ set theory being bi-interpretable. Worse, amongst the well-founded models of \ZF, there cannot even be nontrivial instances of mutual interpretation. The basic lesson of this is that, when one follows the mutual interpretations we mentioned above, \emph{there is no getting back}. One cannot recover the original model. When you move from a well-founded model of \ZFC\ with large cardinals to the corresponding determinacy model, you cannot define a copy of the original model again, and the same when interpreting in the other direction. In this sense, interpretation in models of set theory inevitably involves and requires the loss of set-theoretic information.

Meanwhile, we have showed that several natural weak set theories avoid this phenomenon, including Zermelo-Fraenkel set theory $\ZFCm$ without the power set axiom and Zermelo set theory Z. It remains open just how strong the weak set theories can be, while still admitting non-trivial instances of bi-interpretation. In addition, there remain interesting open questions concerning the circumstances in which $H_{\omega_1}$ and $H_{\omega_2}$ might be bi-interpretable or bi-interpretation synonymous.

%\printbibliography

%% Use this to get acceptable biblio output for the arXiv, which doesn't accept biblatex bbl files.
%% Must also comment out the Bibliography stuff in preamble
\bibliographystyle{halpha}
\bibliography{MathBiblio,HamkinsBiblio,WebPosts,references}

\end{document}